\theoremstyle{thmstyleone}%
\newtheorem{theorem}{Theorem}[section]
\newtheorem{proposition}[theorem]{Proposition}
\newtheorem{lemma}[theorem]{Lemma}
\newtheorem{corollary}[theorem]{Corollary}
\theoremstyle{thmstyletwo}%
\newtheorem{example}[theorem]{Example}
\newtheorem{remark}[theorem]{Remark}
\theoremstyle{thmstylethree}%
\newtheorem{definition}[theorem]{Definition}
\begin{document}

\title[Reproducing Kernel Hilbert Spaces for Virtual Persistence Diagrams]{Reproducing Kernel Hilbert Spaces for Virtual Persistence Diagrams}


\author{\fnm{Charles} \sur{Fanning}} \email{cfannin8@students.kennesaw.edu}

\author{\fnm{Mehmet Emin} \sur{Aktas}} \email{maktas1@kennesaw.edu}

\affil{\orgdiv{Department of Data Science and Analytics}, \orgname{Kennesaw State University}, \orgaddress{\street{1000 Chastain Road}, \city{Kennesaw, \postcode{30144}, \state{Georgia}, \country{United States of America}}}}


\abstract{A persistence diagram is a finite multiset of birth-death pairs representing the lifetimes of topological features across a filtration. Existing functional and kernel representations of persistence diagrams are typically constructed extrinsically through embeddings into auxiliary spaces. For filtrations with finite indexing sets, the associated virtual persistence diagram group obtained by Grothendieck completion of the persistence diagram monoid is a finitely generated lattice. We define a phase map sending each persistence interval to a circular coordinate and a character map aggregating the phases of intervals in a virtual persistence diagram. We introduce heat damping on characters of virtual persistence diagram groups to suppress the unstable frequencies. We derive Lipschitz bounds for the resulting kernels and apply them in a synthetic segmentation experiment.}



\keywords{Persistent homology, Fourier multipliers, Stability, Topological data analysis}


\pacs[Mathematics Subject Classification]{55N31, 43A25, 43A35}

\maketitle

\section{Introduction}\label{sec:intro}

Persistence diagrams give stable summaries of filtered topological data and are widely used in topological data analysis (TDA) \cite{892133,Zomorodian2005ComputingPH,Oudot2015PersistenceT,CohenSteiner2007StabilityPD}. Many applications require kernels, feature maps, and other analytic representations of persistence diagrams, and existing constructions typically obtain these representations through embeddings into linear or function spaces via (positive definite) kernels. Algebraic approaches to persistence show that persistence diagrams also arise from M\"obius inversion of rank-type data, where signed multiplicities occur naturally and the resulting persistence objects extend beyond interval decompositions \cite{Patel_2018,Kim2021GeneralizedPD}. Virtual persistence diagrams encode this algebraic structure by passing from the commutative monoid of persistence diagrams to an abelian group equipped with the translation-invariant metric construction (Equation~\eqref{eq:groth-metric}) on the Grothendieck completion \cite{Bubenik2022VirtualPD}. This raises the natural question of whether harmonic analysis on the resulting abelian group, particularly through the Fourier--Stieltjes representation (Equation~\eqref{eq:fourier-stieltjes}) and Bochner representation (Equation~\eqref{eq:bochner-representation}), can give intrinsic analytic representations of persistence diagrams and their functions.

This paper develops those harmonic-analytic tools in the setting of filtrations with finite indexing sets. Such filtrations arise naturally in many data settings: digital images (see Section~\ref{subsec:image-segmentation}), voxel data, graph filtrations with finitely many edge labels, categorical data, and bounded integer-valued filtrations. For these filtrations, the associated virtual persistence diagram group obtained by the Grothendieck completion from Definition~\ref{def:groth} is a finitely generated lattice identified explicitly with \(\mathbb Z^{X\setminus A}\) in Equation~\eqref{eq:KXA-lattice}. We study the Fourier analysis, heat kernels (as in the paragraph after Definition~\ref{def:fourier-multiplier}), and Lipschitz bounds~(Definition~\ref{def:Lip-seminorm}) induced by the translation-invariant \(W_1\) metric (see Equation~\eqref{eq:Wp}) on this lattice.

The first objective is to lift Wasserstein stability from diagrams to the harmonic side: characters on the virtual persistence diagram group should define an explicit dual (as in Equation~\eqref{eq:character-formula}), and Lipschitz bounds on stable functions should be expressible in terms of differences of phase on virtual persistence diagrams.

The second objective is to construct heat-kernel methods through heat spectral multipliers---Equation~\eqref{eq:heat-spectral-multiplier}---on the dual torus, determined by the graph Laplacian in Equation~\eqref{eq:graph-laplacian} and its symbol~(see Equation~\eqref{eq:fourier-multiplier}). The goal is to use heat damping to suppress the modes that are unstable with respect to the $W_1$ transport metric.

The third objective is to make the resulting kernels usable in computations. We construct random Fourier feature maps that approximate the heat kernels through the sampling law in Equation~\eqref{eq:heat-sampling-density} and feature map (see Equation~\eqref{eq:heat-rff-map}), and apply these approximations as finite-dimensional topological features in a machine learning problem.


Characters, in the sense of Definition~\ref{def:characters-dual}, on the virtual persistence diagram group $K(X,A)$ determine the phase functions \(\phi_\theta\) from Equation~\eqref{eq:phase-function}. Lemma~\ref{lem:char-lip-comparison} identifies the Lipschitz constant~(Definition~\ref{def:Lip-seminorm}) of $\chi_\theta$ exactly with the Lipschitz constant of $\phi_\theta$.

\begin{lemma}
For every $\theta\in\mathbb T^{|X\setminus A|}$,
\[
\mathrm{Lip}(\chi_\theta)
=
\mathrm{Lip}(\phi_\theta).
\]
\end{lemma}

Observe that the phase function $\phi_\theta$ maps persistence intervals to circular coordinates, and that the character $\chi_\theta$ aggregates these phases into a single Fourier mode~(Definition~\ref{def:characters-dual}) on the virtual persistence diagram group. Because a character may aggregate phase contributions from arbitrarily many persistence intervals distributed throughout a virtual persistence diagram, it is not a priori clear that the resulting global Fourier mode should preserve the same Lipschitz stability as the phase map. Lemma~\ref{lem:char-lip-comparison} shows that this does not occur: the Lipschitz stability of the resulting Fourier mode is exactly the intrinsic phase Lipschitz geometry on the persistence space.

The Dirichlet energy \(\lambda(\theta)\) defined in Equation~\eqref{eq:lambda-dirichlet} from the graph Laplacian of Definition~\ref{def:graph-laplacian} measures the instability of the character \(\chi_\theta\) through the intrinsic differences of phases appearing in Equation~\eqref{eq:edgewise-character-lipschitz}. Lemma~\ref{lem:lambda-vs-L} compares the graph representation's Dirichlet energy (Definition~\ref{def:graph-dirichlet}) $\lambda(\theta)$ with the Lipschitz scale of $\chi_\theta$, measured by the seminorm from Definition~\ref{def:Lip-seminorm}.

\begin{lemma}
For every $\theta\in\mathbb T^{|X\setminus A|}$,
\[
\frac{2\,w_{\min}\,d_{\min}^2}{\pi^2}\,
\mathrm{Lip}(\chi_\theta)^2
\ \le\
\lambda(\theta)
\ \le\
\frac12\,w_{\max}\,M\,d_{\max}^2\,
\mathrm{Lip}(\chi_\theta)^2.
\]
\end{lemma}

Here $w_{\min},w_{\max}$ denote the extremal edge weights of the graph model, $d_{\min},d_{\max}$ denote the extremal edge lengths in the quotient metric, and $M$ is the number of edges. The quantity $\lambda(\theta)$ is the Dirichlet energy of the phase function $\phi_\theta$: it measures the total weighted phase variation across the graph model of the persistence space. The edge weights may coincide with the transport geometry, but they may also encode a separate diffusion cost, and the constants in the comparison record the interaction between these two structures. The factor $M$ in the upper bound appears because the Dirichlet energy sums phase variation over all graph edges, whereas the Lipschitz bound measures only the largest difference of phases. Consequently, the heat factor $e^{-t\lambda(\theta)}$ suppresses Fourier modes whose circular coordinates are unstable on the graph representation of the geometry of a finite virtual persistence diagram.

The heat measures defined in Equation~\eqref{eq:heat-measure} give weights to characters according to the Dirichlet energy \(\lambda(\theta)\) of their phase functions. Theorem~\ref{thm:heat-lip} shows that the resulting heat-weighted reproducing kernel Hilbert space (RKHS) $\mathcal H_t$ consists of functions whose Lipschitz stability is controlled by the same Fourier modes identified above, with high-energy (possibly unstable) modes suppressed by the heat factor $e^{-t\lambda(\theta)}$.

\begin{theorem}[Spectral form]
For every $t>0$ and every $f\in\mathcal H_t$,
\[
\mathrm{Lip}(f)
\ \le\
\|f\|_{\mathcal H_t}
\left(
\int_{\mathbb T^{|X\setminus A|}}
\mathrm{Lip}(\chi_\theta)^2\,
e^{-t\lambda(\theta)}
\,d\mu(\theta)
\right)^{1/2}.
\]
\end{theorem}

Functions in the heat RKHS $\mathcal H_t$ are constructed by superposing Fourier modes on the virtual persistence diagram group. A priori, summing many stable characters could still produce an unstable function on persistence diagrams. Theorem~\ref{thm:heat-lip} shows that this does not occur: the Wasserstein Lipschitz stability of the full function is controlled by a heat-weighted average of the characterwise Lipschitz bounds. Since the heat factor $e^{-t\lambda(\theta)}$ exponentially suppresses modes with large Dirichlet energy, functions in $\mathcal H_t$ concentrate on Fourier modes whose circular coordinate representations are stable across the virtual persistence diagram geometry.

Corollary~\ref{cor:heat-lip-geom} rewrites the heat RKHS bound as in Equation~\eqref{eq:heat-rkhs-lip-bound} using only the phase Lipschitz constants on \((X/A,\overline d_1)\). In this form, the heat factor appears as Gaussian decay in \(\mathrm{Lip}(\phi_\theta)^2\).

\begin{corollary}[Geometric form]
For every $t>0$ and every $f\in\mathcal H_t$,
\[
\mathrm{Lip}(f)
\ \le\
\|f\|_{\mathcal H_t}
\left(
\int_{\mathbb T^{|X\setminus A|}}
\mathrm{Lip}(\phi_\theta)^2
\exp\!\left(
-t\,\frac{2w_{\min}d_{\min}^2}{\pi^2}\,
\mathrm{Lip}(\phi_\theta)^2
\right)
\,d\mu(\theta)
\right)^{1/2}.
\]
\end{corollary}

This formulation expresses the Wasserstein Lipschitz bound for functions in the heat RKHS entirely in terms of the geometry of the phase maps (Equation~\eqref{eq:phase-function}) on $(X/A,\overline d_1)$. The quantity controlling the bound is the Gaussian-weighted average $L^2e^{-tCL^2}$, where $L=\mathrm{Lip}(\phi_\theta)$ and $C=2w_{\min}d_{\min}^2/\pi^2$. Consequently, a function in the heat RKHS is stable when its Fourier representation concentrates on circular coordinate systems whose difference in phases is small with respect to changes in the persistence diagram geometry.

\subsection{Related work}

Generalized persistence diagrams arise from applying M\"obius inversion to
rank-type invariants. Patel develops generalized persistence diagrams for
poset-indexed persistence modules with multiplicities in abelian groups
\cite{patel2018generalized}. Kim and M\'emoli develop generalized rank
invariants and generalized persistence diagrams for poset-indexed modules in
settings where interval decompositions need not exist
\cite{kimmemoli2021generalized}. Betthauser, Bubenik, and Edwards introduce
graded persistence diagrams through M\"obius inversion of graded rank
functions, yielding signed multiplicities in the graded setting
\cite{betthauser2021graded}. In applications, persistent homology has also
been used in image segmentation through topological features and topology-aware loss
functions \cite{9186664,10378018,QAISER20191}.

The metric-pair formalism for persistence diagram monoids and virtual
persistence diagram group provides the case of virtual persistence diagrams with finitely many intervals supporting later
relative measure constructions
\cite{MR4414770,Bubenik2022VirtualPD}. Bubenik and Elchesen develop relative
Borel and Radon measures on metric pairs and construct relative Wasserstein
distances for them \cite{bubenik2025relativeoptimaltransport}. They also prove the 
relative Kantorovich--Rubinstein and Monge--Kantorovich duality theorems. In
particular, relative \(1\)-finite and locally \(1\)-finite Radon measures are
characterized through duality with Lipschitz and compactly supported
Lipschitz spaces. Bubenik and Ross then construct Schauder bases for compactly
supported Lipschitz spaces on polyhedral pairs using nested triangulations~\cite{bubenik2025schauderbasismultiparameterpersistence}.
Evaluating signed multiparameter persistence diagrams on these basis elements
gives stable sequence-space embeddings and a minimality result in the ambient
relative Radon measure setting.

\subsection{Contributions}

Since $K(X,A)$ is a discrete LCA group, Pontryagin duality
(Proposition~\ref{prop:dual}) identifies its dual with the torus
$\widehat{K(X,A)}\cong\mathbb T^{|X\setminus A|}$ through the explicit
character formula (Equation~\eqref{eq:character-formula}) and pairing
(Equation~\eqref{eq:dual-pairing}). Each character $\chi_\theta$ determines a phase map $\phi_\theta$ assigning circular coordinates to persistence intervals, and Lemma~\ref{lem:char-lip-comparison} identifies the Wasserstein Lipschitz seminorm of the resulting Fourier mode exactly with the intrinsic phase Lipschitz seminorm on the persistence diagram geometry. Corollary~\ref{cor:edgewise-char} further reduces these Lipschitz bounds to differences of phase on a finite graph model of $(X/A,\overline d_1)$.

We then introduce graph Dirichlet energies through the weighted graph Laplacian from Definition~\ref{def:graph-laplacian} and Dirichlet form from Definition~\ref{def:graph-dirichlet} on the quotient persistence space. The resulting energy $\lambda(\theta)$ measures total weighted phase variation of the circular coordinates associated to $\chi_\theta$, and Lemma~\ref{lem:lambda-vs-L} compares this energy directly with the Wasserstein Lipschitz scale on characters. This identifies the heat multiplier \(e^{-t\lambda(\theta)}\) from Equation~\eqref{eq:heat-measure} as a mechanism for suppressing Fourier modes whose circular coordinates are unstable across the persistence diagram geometry.

From these heat multipliers, we construct translation-invariant heat kernels and reproducing kernel Hilbert spaces on virtual persistence diagrams. Lemma~\ref{lem:rkhs-lip} and Theorem~\ref{thm:heat-lip} lift the characterwise stability bounds to arbitrary Reproducing Kernel Hilbert Space (RKHS) functions, while Corollaries~\ref{cor:heat-lip-spectral}--\ref{cor:heat-lip-geom} express the resulting Lipschitz estimates in both spectral and geometric forms. In particular, the geometric formulation shows that the Wasserstein stability of functions on the Fourier side can be read directly from the phase geometry of the original persistence diagram space.

To obtain computable finite-dimensional features, we introduce the heat-weighted random Fourier feature map (Equation~\eqref{eq:heat-rff-map}), following the general construction in Definition~\ref{def:rff-general}, by sampling characters from the heat law (Equation~\eqref{eq:heat-sampling-density}) on the dual torus. Lemma~\ref{lem:heat-rff-unbiased} proves that these random features are unbiased approximations of the heat kernel, and Theorem~\ref{thm:heat-rff-spectral} extends the same heat-weighted Lipschitz control to the finite-dimensional feature maps.

Finally, Section~\ref{sec:experiments} applies these constructions to machine learning with virtual persistence diagrams. We define a heat-kernel topological loss based on virtual diagram differences and compare its random Fourier feature approximants against Wasserstein-based topological losses in a synthetic image segmentation experiment.

\subsection{Organization}\label{subsec:intro-organization}

\begin{itemize}
    \item Section~\ref{sec:background} reviews persistent homology, Wasserstein stability, virtual persistence diagrams, and the harmonic-analysis and RKHS tools used throughout the paper.

    \item Section~\ref{subsec:LCA} develops the harmonic analysis of the virtual persistence diagram group, introduces phase functions and Fourier modes on persistence diagrams, and proves the phase and Lipschitz bounds supporting the theory.

    \item Section~\ref{sec:stable-multipliers} introduces the graph Dirichlet energy and heat multipliers, and constructs heat kernels that suppress Fourier modes with rapidly oscillating circular coordinates.

    \item Section~\ref{subsec:rkhs-layer} develops the associated RKHSs and proves global Wasserstein Lipschitz bounds for functions generated by the heat kernels.

    \item Section~\ref{subsec:heat-rff} constructs heat-weighted random Fourier features and proves kernel approximation and Lipschitz stability results for the resulting finite-dimensional feature maps.

    \item Section~\ref{sec:experiments} applies these constructions to a machine learning problem and compares the resulting heat-kernel losses with a Wasserstein-based baseline for image segmentation.

    \item Section~\ref{sec:conclusion} concludes with a discussion of the mathematical consequences, limitations, and possible extensions of the theory.
\end{itemize}

\section{Background and notation}\label{sec:background}

This section collects the topological and analytic tools used in the rest of the paper. On the topological side, we recall the classical one-parameter formulation of persistent homology. We then specialize to the virtual persistence diagram framework of Bubenik and Elchesen (Section~\ref{subsec:VPD}), formulated in the category of metric pairs and Lipschitz maps and equipped with a translation-invariant $1$-Wasserstein metric via the Grothendieck completion of the diagram monoid.

On the analytic side, we recall basic notions from the theory of positive definite kernels and reproducing kernel Hilbert spaces (Section~\ref{subsec:rkhs-toolkit}), together with standard facts from harmonic analysis on discrete locally compact abelian (LCA) groups (Section~\ref{subsec:harmonic-toolkit}). These tools will later be applied to the virtual persistence diagram group $K(X,A)$ to construct heat kernels and phase-based Lipschitz bounds on functions of persistence diagrams.

We will frequently measure the regularity of scalar-valued functions on metric spaces via their Lipschitz seminorm.

\begin{definition}\label{def:Lip-seminorm}
Let $(M,\rho)$ be a metric space and $f:M\to\mathbb C$. The \emph{Lipschitz seminorm} of $f$ is
\[
  \mathrm{Lip}(f)\ :=\ \sup_{\alpha\neq\beta}\,
  \frac{|f(\alpha)-f(\beta)|}{\rho(\alpha,\beta)}\ \in [0,\infty].
\]
\end{definition}
This is the smallest constant $L$ for which \( |f(\alpha)-f(\beta)|\ \le\ L\,\rho(\alpha,\beta) \) for all \(\alpha,\beta\in M. \) If $\mathrm{Lip}(f) \leq K <
\infty$, we say that $f$ is $K$-Lipschitz.

Persistent homology gives a topological summary of filtered data by recording the birth and death of homological features across a filtration \cite{892133,Zomorodian2005ComputingPH,Oudot2015PersistenceT}. Classically, a persistence diagram is a finite multiset of birth--death pairs $(b,d)\in\overline{\mathbb R}_{<}^2$, together with the diagonal $\Delta=\{(x,x):x\in\mathbb R\}$ taken with infinite multiplicity for optimal transport matchings. The bottleneck and Wasserstein distances give stability bounds for these diagrams under perturbations of the underlying filtration \cite{CohenSteiner2007StabilityPD,Oudot2015PersistenceT}.

Persistence diagrams do not always arise from continuous or infinite-valued filtrations. Finite indexing sets occur naturally in digital imaging and voxel data, temporal network filtrations with bounded integral times, and categorical data. In this case, persistence diagrams are supported on a finite metric space, and the associated virtual persistence diagram group is a finitely generated lattice.

Algebraic approaches to persistence extend the classical diagram formalism through M\"obius inversion of rank-type invariants, producing generalized persistence diagrams with signed multiplicities \cite{Patel_2018, Kim2021GeneralizedPD}. Ordinary persistence diagrams form a commutative monoid under pointwise addition, but signed multiplicities require passage to formal differences and hence to a group completion. The virtual persistence diagram framework of Bubenik and Elchesen gives a compatible transport metric on the resulting abelian group \cite{Bubenik2022VirtualPD}.

\subsection{Virtual persistence diagrams}\label{subsec:VPD}

\begin{figure}[ht]
    \centering
    \includegraphics[width=0.75\linewidth]{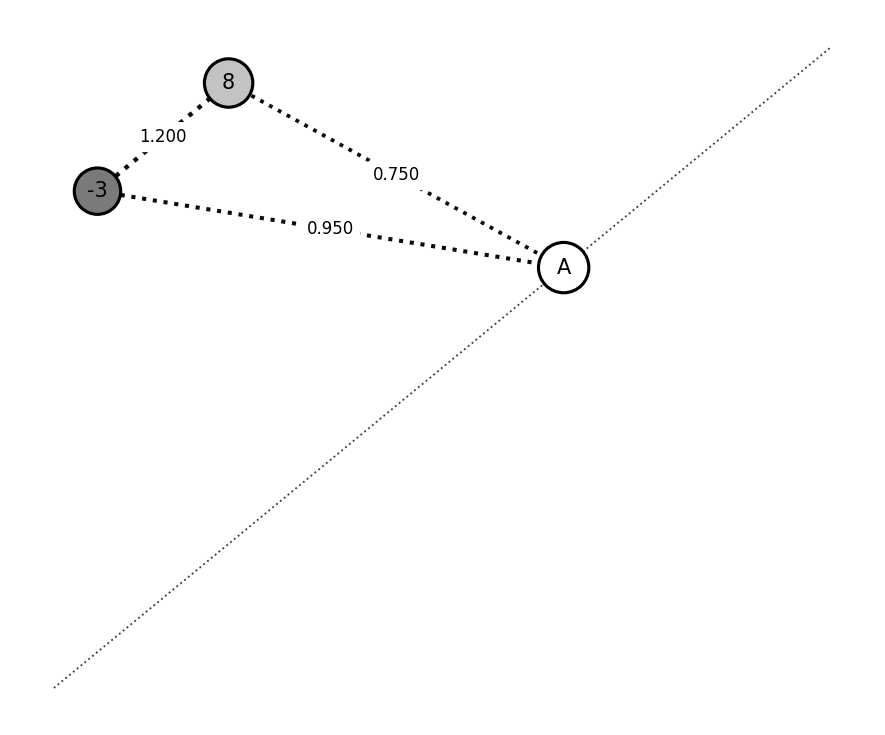}
    \caption{ 
    Visualization of a virtual persistence diagram: the vertex $A$ denotes the collapsed diagonal point, edges are weighted by the metric on the (discrete) persistence diagram, and the vertex labels record the (possibly negative) multiplicities.}
    \label{fig:toy-example}
\end{figure}

We follow Bubenik--Elchesen \cite{Bubenik2022VirtualPD} and work in the category $\mathbf{Lip}$ of metric spaces and Lipschitz maps. 
A \emph{metric pair} is a triple $(X,d,A)$ consisting of a metric space $(X,d)$ and a distinguished subset $A\subseteq X$.

\begin{definition}\label{def:pstrength}
Let $(X,d,A)$ be a metric pair and let $p\in[1,\infty]$. 
Write $d(x,A):=\inf_{a\in A} d(x,a)$ and let $\|(\cdot,\cdot)\|_p$ denote the $\ell^p$ norm on $\mathbb{R}^2$. 
The \emph{$p$-strengthening} \cite{Bubenik2022VirtualPD} of $d$ with respect to $A$ is
\begin{equation}\label{eq:pstrength}
d_p(x,y)\ :=\ \min\!\bigl\{\,d(x,y),\ \|(d(x,A),\,d(y,A))\|_p\,\bigr\},\qquad x,y\in X.
\end{equation}
\end{definition}

\begin{definition}\label{def:quotient}
Let \(q:X\to X/A\) be the quotient map that identifies all points of
\(A\) to a single basepoint \([A]\). The \emph{quotient metric induced by
the \(p\)-strengthening} is the metric \(\overline d_p\) on \(X/A\)
defined by
\begin{equation}\label{eq:pstrength-quotient}
\overline d_p(q(x),q(y)):=d_p(x,y),
\qquad x,y\in X.
\end{equation}
Equivalently, \(d_p=q^\ast\overline d_p\). In particular,
\(d_p\le d\) on \(X\), and \(\overline d_p\) metrizes the quotient
\(X/A\) \cite{Bubenik2022VirtualPD}.
\end{definition}

\begin{remark}\label{rem:finite-quotient-representations}
When \(X/A\) is finite, the pointed metric space
\((X/A,\overline d_1)\) with basepoint \([A]\) is completely determined
by the pairwise distances \(\overline d_1(u,v)\) for \(u,v\in X/A\).
Equivalently, these data may be encoded either as a distance matrix or
as the weighted complete graph on the vertex set \(X/A\) with edge weight
\(\overline d_1(u,v)\) between distinct vertices \(u\) and \(v\). We
will use the distance-matrix and complete-graph representations
interchangeably.
\end{remark}

We use the free commutative monoid on $X$ to model persistence diagrams as formal sums.

\begin{definition}\label{def:finite-diagrams}
Let \(D(X)\) denote the free commutative monoid on \(X\), identified with
the set of finitely supported functions \(X\to\mathbb N\). Equivalently,
elements of \(D(X)\) may be written as finite formal sums
\[
\alpha=\sum_i n_i x_i,
\qquad n_i\in\mathbb N,\ x_i\in X.
\]
For a metric pair \((X,d,A)\), define
\[
D(X,A)\ :=\ D(X)/D(A)\ \cong\ D(X\setminus A),
\]
whose elements are the finite persistence diagrams on \((X,A)\)
\cite{Bubenik2022VirtualPD}.
\end{definition}

Distances between diagrams are given by Wasserstein metrics built from matchings.

\begin{definition}\label{def:Wp-diagrams}
Let \(\pi_1,\pi_2:X\times X\to X\) be the coordinate projections and
write \((\pi_i)_*:D(X\times X)\to D(X)\) for the induced monoid maps. For \(\alpha,\beta\in D(X,A)\), a \emph{matching}
\cite{Bubenik2022VirtualPD} between \(\alpha\) and \(\beta\) is any
\(\sigma\in D(X\times X)\) such that
\[
(\pi_1)_*\sigma\ =\ \alpha\ \ (\mathrm{mod}\ D(A)),\qquad
(\pi_2)_*\sigma\ =\ \beta\ \ (\mathrm{mod}\ D(A)).
\]
For \(p\in[1,\infty)\), the \emph{\(p\)-Wasserstein distance} is
defined by
\begin{equation}\label{eq:Wp}
W_p[d](\alpha,\beta)
\ :=\
\inf_{\sigma=\sum_{i=1}^n (x_i,y_i)}
\Bigl(\sum_{i=1}^n d(x_i,y_i)^p\Bigr)^{1/p},
\end{equation}
where the infimum ranges over all matchings \(\sigma\) between
\(\alpha\) and \(\beta\). For \(p=\infty\), define
\begin{equation}\label{eq:Winfty}
W_\infty[d](\alpha,\beta)
\ :=\
\inf_{\sigma=\sum_{i=1}^n (x_i,y_i)}
\max_{1\le i\le n} d(x_i,y_i).
\end{equation}
\end{definition}

The key structural property we need is translation-invariance of $W_p$ on the monoid $D(X,A)$.

\begin{theorem}\label{thm:TI-criterion}
For a metric pair $(X,d,A)$ and $p\in[1,\infty]$, the following are equivalent~\cite{Bubenik2022VirtualPD}:
\begin{enumerate}
  \item $W_p[d]$ is translation-invariant on $D(X,A)$;
  \item the quotient metric $\overline d_p$ on $X/A$ is a $p$-metric, i.e.
  \[
  \overline d_p(\overline x,\overline y)\ \le\ \bigl\|(\overline d_p(\overline x,\overline z),\overline d_p(\overline z,\overline y))\bigr\|_p
  \]
  for all $\overline x,\overline y,\overline z\in X/A$. 
\end{enumerate}
In particular, $W_1[d]$ is always translation-invariant.
\end{theorem}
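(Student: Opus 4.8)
The plan is to establish the two implications separately, and then observe that the ``in particular'' clause follows from the elementary fact that $\overline d_p$ is always a $1$-metric when $p=1$ (the triangle inequality is exactly the $p=1$ case of the $p$-metric inequality). For the main equivalence, the key reduction is that translation invariance of $W_p[d]$ on the monoid $D(X,A)$ is tested against single generators: since every element of $D(X,A)\cong D(X\setminus A)$ is a finite sum of generators $\overline x$, and $W_p$ is built from matchings that decompose along the monoid structure, it suffices to understand $W_p[d](\alpha+\overline z,\beta+\overline z)$ versus $W_p[d](\alpha,\beta)$ for a single added point $\overline z$, and in fact to test $\alpha,\beta$ themselves on generators.

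First I would record the basic computation identifying $W_p[d]$ on $D(X,A)$ with the quotient data: by Remark~\ref{rem:quotient} we have $d_p = q^\ast\overline d_p$, and an optimal matching between diagrams may always be taken to match each off-diagonal point either to an off-diagonal point of the other diagram or to $A$, so that $W_p[d]$ coincides with $W_p[\overline d_p]$ computed on $X/A$ with the basepoint $\overline A$ playing the role of the diagonal. In particular, for two generators one gets $W_p[d](\overline x,\overline y)=\min\{\overline d_p(\overline x,\overline y),\ \|(\overline d_p(\overline x,\overline A),\overline d_p(\overline y,\overline A))\|_p\}=\overline d_p(\overline x,\overline y)$ since $\overline d_p(\overline x,\overline A)=0$ by definition of the quotient (the basepoint is the class of $A$). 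Thus $W_p[d]$ restricted to generators already reproduces $\overline d_p$ on $X/A$.

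For (1) $\Rightarrow$ (2): assume $W_p[d]$ is translation invariant. Apply invariance to the shift by the generator $\overline z$: we get $W_p[d](\overline x+\overline z,\ \overline y+\overline z)=W_p[d](\overline x,\overline y)=\overline d_p(\overline x,\overline y)$. Now I would compute the left-hand side directly from the matching definition: the two competing strategies for matching $\{\overline x,\overline z\}$ against $\{\overline y,\overline z\}$ are (a) match $\overline z\leftrightarrow\overline z$ and $\overline x\leftrightarrow\overline y$, costing $\overline d_p(\overline x,\overline y)$, or (b) match $\overline x\leftrightarrow\overline z$ and $\overline z\leftrightarrow\overline y$ (each leftover point paired to $\overline A$ at cost $0$), costing $\|(\overline d_p(\overline x,\overline z),\overline d_p(\overline z,\overline y))\|_p$; the value is the minimum. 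Translation invariance forces the minimum to equal $\overline d_p(\overline x,\overline y)$, i.e. $\overline d_p(\overline x,\overline y)\le \|(\overline d_p(\overline x,\overline z),\overline d_p(\overline z,\overline y))\|_p$ for all $\overline x,\overline y,\overline z$, which is precisely the $p$-metric inequality.

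For (2) $\Rightarrow$ (1): assume $\overline d_p$ is a $p$-metric on $X/A$. I would show that, given an optimal matching $\sigma$ for the pair $(\alpha,\beta)$, the matching $\sigma+\mathrm{diag}_*(\overline z)$ (add the pair $(\overline z,\overline z)$) is a valid matching for $(\alpha+\overline z,\ \beta+\overline z)$ of the same cost, so $W_p[d](\alpha+\overline z,\beta+\overline z)\le W_p[d](\alpha,\beta)$. Conversely, given an optimal matching $\tau$ for $(\alpha+\overline z,\beta+\overline z)$, I need to produce a matching for $(\alpha,\beta)$ of cost at most $\mathrm{cost}(\tau)$: trace where the added copy of $\overline z$ goes on each side; if both are matched to each other or to $\overline A$, delete them; otherwise $\overline z$ on the left is matched to some point $\overline y$ (a point of $\beta$ or $\overline A$) and $\overline z$ on the right is matched to some point $\overline x$ (of $\alpha$ or $\overline A$), and I reroute by matching $\overline x\leftrightarrow\overline y$ directly — here the $p$-metric inequality $\overline d_p(\overline x,\overline y)\le\|(\overline d_p(\overline x,\overline z),\overline d_p(\overline z,\overline y))\|_p$ is exactly what guarantees the rerouted cost does not increase (in the $\ell^p$ aggregate of the total cost). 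Iterating the shift generator by generator gives full translation invariance. The main obstacle I anticipate is the bookkeeping in this ``rerouting'' argument: one must handle carefully the cases where $\overline z$ is matched to the basepoint on one or both sides, the possibility of higher multiplicities (the same point appearing several times), and the fact that the $p$-metric inequality must be applied coordinatewise inside the $\ell^p$ sum defining $W_p$ rather than to a single distance — this last point is where the hypothesis $\overline d_p$ being a \emph{$p$-metric} (not merely a metric) is essential, and is the only place the value of $p$ enters.
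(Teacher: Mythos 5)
The paper itself does not prove Theorem~\ref{thm:TI-criterion}: it is imported from Bubenik--Elchesen \cite{Bubenik2022VirtualPD} and stated in Section~\ref{subsec:VPD} without proof, so there is no in-paper argument to compare against. Judged on its own terms, your plan is essentially the standard argument behind the cited result: identify $W_p[d]$ with the matching distance computed in the quotient metric $\overline d_p$, with the class of $A$ playing the role of the diagonal; obtain (1)$\Rightarrow$(2) by testing translation invariance on the two-point diagrams $\overline x+\overline z$ versus $\overline y+\overline z$, where the matching that sends $\overline x\mapsto\overline z$ and $\overline z\mapsto\overline y$ forces the $p$-metric inequality; obtain (2)$\Rightarrow$(1) by adding the pair $(\overline z,\overline z)$ for one inequality and rerouting the two extra copies of $\overline z$ for the other, using $\overline d_p(\overline x',\overline y')^p\le \overline d_p(\overline x',\overline z)^p+\overline d_p(\overline z,\overline y')^p$ inside the $\ell^p$ aggregate, exactly where the $p$-metric hypothesis enters; and note that for $p=1$ condition (2) is the ordinary triangle inequality, which gives the final clause. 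The bookkeeping you flag (basepoint matches, multiplicities, coordinatewise use of the inequality) is indeed the real content of the second direction.

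There is, however, one genuine error in your justification, though it is localized and fixable: you assert $\overline d_p(\overline x,\overline A)=0$ ``by definition of the quotient'' and describe matches to $\overline A$ as cost $0$. This is false: the basepoint $[A]$ satisfies $\overline d_p(\overline x,[A])=d(x,A)$, which is generally positive (otherwise $\overline d_p$ would not be a metric on $X/A$; cf.\ Remark~\ref{rem:quotient}). As written, your own formula $W_p[d](\overline x,\overline y)=\min\{\overline d_p(\overline x,\overline y),\ \|(\overline d_p(\overline x,\overline A),\overline d_p(\overline y,\overline A))\|_p\}$ combined with that claim would give the value $0$, contradicting the conclusion you draw from it. The correct reason the minimum equals $\overline d_p(\overline x,\overline y)$ is that routing through $A$ is already absorbed into the $p$-strengthening, i.e.\ $\overline d_p(\overline x,\overline y)\le\|(d(x,A),d(y,A))\|_p$ by Definition~\ref{def:pstrength}; matches to the basepoint cost $d(\cdot,A)$, and in the rerouting step the mixed basepoint cases must be closed by applying the $p$-metric inequality to the triple $(\overline x',[A],\overline z)$ (or by deletion when both extra copies of $\overline z$ hit the basepoint), not by treating them as free. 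Finally, the identification $W_p[d]=W_p[\overline d_p]$ on which your whole reduction rests is itself a nontrivial lemma (replace a match $x\leftrightarrow y$ realizing $\|(d(x,A),d(y,A))\|_p$ by two matches through near-optimal points of $A$, with an $\varepsilon$ since infima need not be attained) and should be proved rather than asserted.
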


\begin{remark}
Unless otherwise stated, we work with the $1$-Wasserstein distance $W_1$. 
For $p>1$, translation-invariance fails, so the Grothendieck-group construction recalled below does not in general apply directly to $W_p$.
\end{remark}

We next recall the Grothendieck completion of a cancellative monoid and the induced metric extension.

\begin{definition}\label{def:groth}
Let $(M,+)$ be a cancellative commutative monoid. 
Define an equivalence relation on $M\times M$ by
\begin{equation}\label{eq:groth-equivalence}
(a,b)\sim(c,e)\ \Longleftrightarrow\ \exists\,k\in M:\ a+e+k=b+c+k.
\end{equation}
Write $a-b$ for the equivalence class of $(a,b)$ and set $K(M):=(M\times M)/\!\sim$ with
\[
(a-b)+(c-e)\ :=\ (a+c)-(b+e),\qquad 0:=0-0,\qquad -(a-b):=b-a.
\]
The canonical map $u:M\to K(M)$ given by $u(a)=a-0$ is a monoid homomorphism with the usual universal property: any monoid map $M\to H$ into an abelian group $H$ factors uniquely through a group homomorphism $K(M)\to H$.
\end{definition}

\begin{proposition}\label{prop:metric-lift}
Let $(M,+)$ be a cancellative commutative monoid equipped with a translation-invariant metric $d$. 
Then
\begin{equation}\label{eq:groth-metric}
\rho(a-b,\ c-e)\ :=\ d(a+e,\ c+b)
\end{equation}
is a well-defined translation-invariant metric on $K(M)$, and $u:M\to K(M)$ is $1$-Lipschitz. 
Moreover, $\rho$ is the unique translation-invariant metric on $K(M)$ extending $d$ in the sense that $\rho(u(a),u(b))=d(a,b)$ for all $a,b\in M$~\cite{Bubenik2022VirtualPD}.
\end{proposition}

\begin{remark}
Specializing to $M=D(X,A)$ and $d=W_1[d]$, Bubenik and Elchesen denote the Grothendieck group $K(D(X,A))$ by $K(X,A)$ and call its elements \emph{virtual persistence diagrams} on $(X,A)$ \cite{Bubenik2022VirtualPD}. 
We adopt this terminology; the detailed specialization will be used later, but the only input needed in this section is the abstract metric-lift construction above.
\end{remark}

\subsection{Reproducing Kernel Hilbert Spaces}\label{subsec:rkhs-toolkit}

Positive definite kernels provide a way to encode similarity between points by
means of inner products in an implicit feature space.  Reproducing kernel
Hilbert spaces make this correspondence precise and underlie the classical
\emph{kernel trick}: linear methods in a Hilbert space of functions can be
implemented using only kernel evaluations $k(x,y)$, without ever writing
feature maps explicitly; see, for example, \cite{Berlinet2004RKHS}.

\begin{definition}\label{def:pd-kernel}
Let $X$ be a set. A function $k:X\times X\to\mathbb{C}$ is a
\emph{positive definite kernel} if for every $n\in\mathbb{N}$, every choice of
points $x_1,\dots,x_n\in X$, and every $c_1,\dots,c_n\in\mathbb{C}$,
\[
  \sum_{i,j=1}^n k(x_i,x_j)\,c_i\overline{c_j}\ \ge\ 0.
\]
Equivalently, every Gram matrix $[\,k(x_i,x_j)\,]_{i,j=1}^n$ is positive
semidefinite.  In particular, such a kernel can always be written in the form
\[
  k(x,y)\ =\ \langle \Phi(x),\Phi(y)\rangle_{\mathcal H}
\]
for some Hilbert space $\mathcal H$ and feature map $\Phi:X\to\mathcal H$.
\end{definition}

The RKHS viewpoint fixes $\mathcal H$ canonically as a space of functions on
$X$, in which evaluation is compatible with the kernel.

\begin{definition}\label{def:rkhs}
Let $X$ be a set. A \emph{reproducing kernel Hilbert space} (RKHS) on $X$ is a
Hilbert space $\mathcal{H}$ of complex-valued functions on $X$ such that, for
every $x\in X$, the evaluation functional
\[
  \mathcal{H}\longrightarrow\mathbb{C},\qquad f\longmapsto f(x),
\]
is continuous.
\end{definition}

By the Riesz representation theorem, continuity of evaluation implies that for
each $x\in X$ there is a unique function $k(\cdot,x)\in\mathcal H$ with
\begin{equation}\label{eq:reproducing-property}
  f(x)\ =\ \langle f,\ k(\cdot,x)\rangle_{\mathcal H}
  \qquad\text{for all }f\in\mathcal H.
\end{equation}
The function $k$ is a positive definite kernel, and the following theorem
describes the resulting correspondence.

\begin{theorem}\label{thm:rkhs}
Let $X$ be a set and $k:X\times X\to\mathbb{C}$ a positive definite kernel.
Then there exists a unique RKHS $\mathcal{H}_k$ on $X$ such that:
\begin{enumerate}
\item for each $x\in X$, the function $k(\cdot,x)$ lies in $\mathcal{H}_k$;
\item for all $f\in\mathcal{H}_k$ and $x\in X$,
  \[
    f(x)\ =\ \langle f,\ k(\cdot,x)\rangle_{\mathcal{H}_k}
    \quad\text{\emph{(reproducing property).}}
  \]
\end{enumerate}
Conversely, every RKHS of functions on $X$ arises in this way from a unique
positive definite kernel $k$.
\end{theorem}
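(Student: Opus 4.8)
The statement is the classical Moore--Aronszajn theorem. The plan is to prove the existence half by the standard ``span-and-complete'' construction, and then to read off both uniqueness and the converse from density arguments.

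\emph{Construction of a pre-Hilbert space.} First I would set $k_x:=k(\cdot,x)\in\mathbb C^X$ and let $\mathcal H_0:=\mathrm{span}_{\mathbb C}\{k_x:x\in X\}$ be their linear span inside the space of all complex functions on $X$. On $\mathcal H_0$ I would define a sesquilinear form by declaring $\langle k_x,k_y\rangle_0:=k(y,x)$ and extending linearly in the first argument and conjugate-linearly in the second, so that for $f=\sum_i a_i k_{x_i}\in\mathcal H_0$ one has the identity $\langle f,k_y\rangle_0=\sum_i a_i k(y,x_i)=f(y)$. This identity already settles well-definedness: the value of $\langle f,\cdot\rangle_0$ against every generator $k_y$ depends only on $f$ as a function and not on the chosen finite representation, and sesquilinearity in the second slot propagates this to all of $\mathcal H_0$. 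Positivity $\langle f,f\rangle_0=\sum_{i,j}a_i\overline{a_j}\,k(x_j,x_i)\ge 0$ is exactly the positive-definiteness hypothesis on $k$ (apply the defining inequality with coefficients $c_i:=\overline{a_i}$), so the form is positive semidefinite, hence Hermitian. Finally, Cauchy--Schwarz for positive semidefinite forms gives $|f(y)|^2=|\langle f,k_y\rangle_0|^2\le\langle f,f\rangle_0\,k(y,y)$, so $\langle f,f\rangle_0=0$ forces $f\equiv 0$; thus $\langle\cdot,\cdot\rangle_0$ is a genuine inner product and $\mathcal H_0$ is a pre-Hilbert space of functions on $X$ on which $k$ reproduces.

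\emph{Completion and realization as a function space.} Let $\mathcal H_k$ be the abstract Hilbert-space completion of $\mathcal H_0$; the crucial step is to identify it with an honest space of functions on $X$. For fixed $y$, evaluation $f\mapsto\langle f,k_y\rangle_0$ is bounded on $\mathcal H_0$ with norm $\le\sqrt{k(y,y)}$, so every Cauchy sequence in $\mathcal H_0$ is pointwise Cauchy on $X$; assigning to a class $f\in\mathcal H_k$ the pointwise-limit function defines a linear map $\iota:\mathcal H_k\to\mathbb C^X$, and continuity of the inner product gives $\iota(f)(y)=\langle f,k_y\rangle_{\mathcal H_k}$, which is the reproducing identity on $\mathcal H_k$. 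Injectivity of $\iota$ then follows: $\iota(f)\equiv 0$ means $\langle f,k_y\rangle_{\mathcal H_k}=0$ for all $y$, so $f$ is orthogonal to the dense subspace $\mathcal H_0$, whence $f=0$. Identifying $\mathcal H_k$ with $\iota(\mathcal H_k)$ exhibits it as an RKHS on $X$ containing every $k(\cdot,x)$ and having reproducing kernel $k$, which proves parts (1) and (2).

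\emph{Uniqueness and converse.} If $\mathcal H'$ is any RKHS on $X$ with reproducing kernel $k$, then $k_x\in\mathcal H'$ for all $x$, so $\mathcal H_0\subseteq\mathcal H'$; the reproducing property forces $\langle k_x,k_y\rangle_{\mathcal H'}=k_x(y)=k(y,x)=\langle k_x,k_y\rangle_0$, so the inclusion is isometric, and $\mathcal H_0$ is dense in $\mathcal H'$ because $g\perp\mathcal H_0$ gives $g(x)=\langle g,k_x\rangle_{\mathcal H'}=0$ for all $x$. Hence $\mathcal H'$ and $\mathcal H_k$ are both completions of $\mathcal H_0$ carried into $\mathbb C^X$ by the same pointwise-evaluation map, so $\mathcal H'=\mathcal H_k$ as normed function spaces. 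For the converse, given an arbitrary RKHS $\mathcal H$, continuity of evaluations and the Riesz representation theorem produce $k_x\in\mathcal H$ with $f(x)=\langle f,k_x\rangle_{\mathcal H}$; put $k(x,y):=k_y(x)=\langle k_y,k_x\rangle_{\mathcal H}$. Then for any $x_1,\dots,x_n$ and $c_1,\dots,c_n$ one computes $\sum_{i,j}k(x_i,x_j)c_i\overline{c_j}=\bigl\|\sum_i\overline{c_i}\,k_{x_i}\bigr\|_{\mathcal H}^2\ge 0$, so $k$ is positive definite, and it is the only such kernel since $\langle f,k_x-k'_x\rangle_{\mathcal H}=0$ for all $f$ forces $k_x=k'_x$.

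\emph{Main obstacle.} The one step that needs care rather than bookkeeping is the passage from the abstract completion of $\mathcal H_0$ to a concrete function space: one must know that distinct elements of $\mathcal H_k$ are distinct functions and that $k$ still reproduces after passing to limits. Both hinge on the uniform bound $\|\mathrm{ev}_y\|\le\sqrt{k(y,y)}<\infty$ on $\mathcal H_0$, i.e.\ on continuity of point evaluation---precisely the condition built into the RKHS definition---after which the remaining verifications are routine.
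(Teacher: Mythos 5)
Your proof is correct: it is the classical Moore--Aronszajn argument (span the kernel sections, verify the induced form is a well-defined inner product via the reproducing identity and positive definiteness, complete, realize the completion as a function space through the bounded evaluation functionals, then get uniqueness by density and the converse by Riesz representation). The paper does not prove Theorem~\ref{thm:rkhs} at all---it states it as background and cites the standard RKHS literature---and your construction coincides with the canonical proof given there, including the one genuinely delicate step you flag, namely identifying the abstract completion with a space of functions via the bound $\|\mathrm{ev}_y\|\le\sqrt{k(y,y)}$.
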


In particular, the map $x\mapsto k(\cdot,x)$ embeds $X$ into $\mathcal H_k$ with
\[
  k(x,y)\ =\ \langle k(\cdot,x),k(\cdot,y)\rangle_{\mathcal H_k}.
\]
Working with $\mathcal H_k$ thus amounts to working with the kernel $k$:
inner products, norms, and linear constructions in feature space can all be
expressed purely in terms of kernel evaluations.

\subsection{Prerequisite harmonic analysis}\label{subsec:harmonic-toolkit}

We recall standard notions from abstract harmonic analysis on locally compact abelian groups; see, for example, \cite{Folland2015CourseAH}.

\begin{definition}
A \emph{locally compact abelian group} (LCA group) is a topological group $G$ that is abelian and locally compact Hausdorff. A \emph{Haar measure} on $G$ is a nonzero Radon measure $\mu$ on $G$ that is left invariant:
\[
\mu(xE)=\mu(E)\qquad\text{for all Borel }E\subseteq G\text{ and all }x\in G.
\]
Haar measures exist and are unique up to a positive scalar multiple. When $G$ is discrete, one may take $\mu$ to be the counting measure, and when $G$ is compact, there is a unique Haar probability measure.
\end{definition}

\begin{definition}\label{def:characters-dual}
Let $G$ be an LCA group. A \emph{(unitary) character} of $G$ is a continuous group homomorphism
\[
\chi : G \longrightarrow \mathbb T := \{z\in\mathbb{C} : |z|=1\}.
\]
Each character is a one-dimensional unitary representation of $G$.  
The \emph{Pontryagin dual} $\widehat G$ is the set of all characters, endowed with pointwise multiplication and the compact--open topology.

A character will also be called a Fourier mode: this means that it is one
single frequency component used in Fourier expansions on \(G\).
\end{definition}

\begin{example}\label{ex:ZN-dual}
For $G=\mathbb{Z}^N$ with the discrete topology, the dual group is the $N$-torus
\[
\widehat G\ \cong\ \mathbb{T}^N:=\mathbb{R}^N/2\pi\mathbb{Z}^N.
\]
Writing $\theta=(\theta_1,\dots,\theta_N)\in[0,2\pi)^N$ and $k=(k_1,\dots,k_N)\in\mathbb{Z}^N$, the corresponding character is
\[
\chi_\theta(k)\ :=\ \exp\!\big(i\langle k,\theta\rangle\big),
\qquad
\langle k,\theta\rangle\ :=\ \sum_{j=1}^N k_j\theta_j.
\]
Under this identification, the Haar measure on $\widehat G$ is the normalized Lebesgue measure on $\mathbb{T}^N$.
\end{example}

The coordinates $\theta_j$ may be viewed as circular phase coordinates, and the character $\chi_\theta(k)$ as the aggregate phase associated to the group element $k$.

\begin{definition}\label{def:fourier}
Let $G$ be an LCA group with Haar measure $\mu$. For $f\in L^1(G,\mu)$, the \emph{Fourier transform} $\widehat f:\widehat G\to\mathbb{C}$ is
\begin{equation}\label{eq:fourier-transform}
\widehat f(\chi)\ :=\ \int_G \chi(x)\,f(x)\,d\mu(x),
\qquad \chi\in\widehat G.
\end{equation}
When $G$ is discrete with counting measure and we index characters by $\theta\in\widehat G$, this specializes to
\begin{equation}\label{eq:discrete-fourier-transform}
\widehat f(\theta)\ :=\ \sum_{k\in G} \chi_\theta(k)\,f(k).
\end{equation}
The Fourier transform extends uniquely to a unitary operator
\[
\mathcal{F}:L^2(G,\mu)\longrightarrow L^2(\widehat G,\widehat\mu),
\]
where $\widehat\mu$ is Haar measure on $\widehat G$ (Plancherel theorem).
\end{definition}

\begin{definition}\label{def:FS}
Let $G$ be an LCA group and let $\nu$ be a finite complex Borel measure on $\widehat G$. The \emph{Fourier--Stieltjes transform} of $\nu$ is the function $F_\nu:G\to\mathbb{C}$ defined by
\begin{equation}\label{eq:fourier-stieltjes}
F_\nu(\alpha)\ :=\ \int_{\widehat G} \chi(\alpha)\,d\nu(\chi),
\qquad \alpha\in G.
\end{equation}
We write $|\nu|$ for the total variation measure of $\nu$.
\end{definition}

\begin{proposition}\label{prop:FS-translation}
Let $G$ and $\nu$ be as above. The kernel
\begin{equation}\label{eq:translation-invariant-kernel}
k_\nu(\alpha,\beta)\ :=\ F_\nu(\alpha-\beta),\qquad \alpha,\beta\in G,
\end{equation}
is translation-invariant in the sense that
\[
k_\nu(\alpha+\gamma,\beta+\gamma)\ =\ k_\nu(\alpha,\beta)
\qquad\text{for all }\gamma\in G.
\]
\end{proposition}

\begin{definition}\label{def:pd-function}
A function $\varphi:G\to\mathbb{C}$ is \emph{positive definite} if, for every $n\in\mathbb{N}$, points $\alpha_1,\dots,\alpha_n\in G$, and coefficients $c_1,\dots,c_n\in\mathbb{C}$,
\[
\sum_{i,j=1}^n \varphi(\alpha_i-\alpha_j)\,c_i\overline{c_j}\ \ge\ 0.
\]
If $\varphi$ is positive definite, the kernel $k(\alpha,\beta):=\varphi(\alpha-\beta)$ is positive definite on $G\times G$ in the sense of Definition~\ref{def:pd-kernel}.
\end{definition}

\begin{theorem}[Bochner's theorem]\label{thm:bochner}
Let $G$ be an LCA group. A continuous function $\varphi:G\to\mathbb{C}$ is positive definite if and only if there exists a finite positive Borel measure $\nu$ on $\widehat G$ such that
\begin{equation}\label{eq:bochner-representation}
\varphi(\alpha)\ =\ \int_{\widehat G} \chi(\alpha)\,d\nu(\chi),
\qquad \alpha\in G.
\end{equation}
Equivalently, a continuous kernel $k:G\times G\to\mathbb{C}$ is positive definite and translation-invariant if and only if $k(\alpha,\beta)=\varphi(\alpha-\beta)$ with $\varphi$ of the above form.
\end{theorem}

A positive definite function can then be said to have such a representation via Bochner's theorem above.

\begin{definition}\label{def:fourier-multiplier}
Let $G$ be an LCA group and let $T$ be a bounded translation-invariant operator on $L^2(G,\mu)$.  A measurable function $m:\widehat G\to\mathbb C$ is called the \emph{Fourier multiplier}, or \emph{symbol}, of $T$ if
\begin{equation}\label{eq:fourier-multiplier}
\widehat{Tf}(\chi)
=
m(\chi)\widehat f(\chi)
\end{equation}
for all $f\in L^2(G,\mu)$.
\end{definition}

A Fourier multiplier acts by reweighting characters on the dual group according to frequency, where frequency measures how rapidly the phase of a character changes as one moves through the group. If \(L\) is a nonnegative translation-invariant operator on an LCA group \(G\), then its Fourier multiplier \(m\) is often called the \emph{symbol} of \(L\). In particular, the heat semigroup \(e^{-tL}\) has the \emph{heat spectral multiplier}
\begin{equation}\label{eq:heat-spectral-multiplier}
m_t(\chi)
=
e^{-t m(\chi)},
\qquad \chi\in\widehat G,\ t>0.
\end{equation}
Thus, heat kernels arise by exponentially damping characters according to the symbol \(m\).

\subsubsection{Prerequisite Approximation Theory} \label{subsubsec:approximation-theory}

Random Fourier features approximate Fourier--Stieltjes kernels by replacing the integral in Bochner's theorem (Theorem~\ref{thm:bochner}) over the dual group with a finite empirical average of sampled characters.

Let \(G\) be an LCA group, let \(\nu\) be a finite positive Borel measure on \(\widehat G\), and suppose the associated translation-invariant Bochner kernel
\begin{equation}\label{eq:bochner-kernel}
k_\nu(\alpha,\beta)
:=
\int_{\widehat G}\chi(\alpha-\beta)\,d\nu(\chi)
\end{equation}
is real-valued.

\begin{definition}\label{def:rff-general}
Fix \(R\in\mathbb N\), and let \(\chi^{(1)},\dots,\chi^{(R)}\) be independent samples from the probability measure \(\nu/\nu(\widehat G)\) on \(\widehat G\). The associated random Fourier feature map is
\begin{equation}\label{eq:rff-map}
\Phi_R:G\longrightarrow\mathbb R^{2R},
\end{equation}
defined by
\begin{equation}\label{eq:rff-definition}
\Phi_R(\alpha)
:=
\sqrt{\frac{\nu(\widehat G)}{R}}
\bigl(
\Re\chi^{(r)}(\alpha),
\Im\chi^{(r)}(\alpha)
\bigr)_{r=1}^R.
\end{equation}
\end{definition}

The inner products \(\langle\Phi_R(\alpha),\Phi_R(\beta)\rangle \) give Monte Carlo approximations of the kernel values \(k_\nu(\alpha,\beta)\), thereby replacing the Fourier--Stieltjes integral with a finite-dimensional Euclidean feature map.

\subsection{Prerequisite Potential Theory}\label{subsec:potential-toolkit}

A weighted graph naturally carries an energy measuring variation across edges. Let \(H=(V,E,w)\) be a finite connected weighted graph with symmetric edge weights \(w_{uv}=w_{vu}\ge 0\), extended by \(w_{uv}=0\) whenever \(\{u,v\}\notin E\).

\begin{definition}\label{def:graph-laplacian}
The \emph{graph Laplacian} \(L:\ell^2(V)\to\ell^2(V)\) is defined by
\begin{equation}\label{eq:graph-laplacian}
(Lf)(u)
:=
\sum_{v\in V}
w_{uv}\,\bigl(f(u)-f(v)\bigr),
\qquad u\in V.
\end{equation}
\end{definition}

\begin{definition}\label{def:graph-dirichlet}
Its \emph{Dirichlet form} is
\begin{equation}\label{eq:dirichlet-form}
\mathcal E(f,g)
:=
\frac12
\sum_{u,v\in V}
w_{uv}\,
\bigl(f(u)-f(v)\bigr)
\overline{\bigl(g(u)-g(v)\bigr)},
\end{equation}
for \(f,g\in\ell^2(V)\). We write \( \mathcal E(f):=\mathcal E(f,f). \)
\end{definition}

The identity \(\mathcal E(f)=\langle f,Lf\rangle_{\ell^2(V)}\) shows that the Dirichlet energy is generated by the graph Laplacian (Definition~\ref{def:graph-laplacian}). Since \(H\) is connected, \(\mathcal E(f)=0\) if and only if \(f\) is constant. Thus \(\mathcal E(f)\) increases as \(f\) increases in frequency across edges with large weight.

\subsection{Image segmentation}\label{subsec:image-segmentation}

A grayscale digital image on a finite rectangular grid is a function
\(I:\Omega\to\{0,\dots,2^N-1\}\), where
\(\Omega\subset\mathbb Z^2\) is the pixel lattice and \(N\) is the bit
depth. Analogously, voxel data are functions on finite subsets of
\(\mathbb Z^3\). Since the intensity set is finite and totally ordered,
thresholding across all intensity levels produces a finite filtration by
cubical complexes.

For a threshold \(t\), the associated cubical sublevel complex
is generated by the pixels or voxels with intensity at most \(t\),
together with all of their faces. Persistent homology of this filtration
encodes connected components, loops, and higher-dimensional homological
features across intensity scales.

\begin{definition}\label{def:fixed-cubical-filtration}
A \emph{fixed cubical filtration} is a choice of underlying grid,
cubical-complex construction, and a thresholding rule that is held fixed
across all images and masks under consideration.
\end{definition}

In binary image segmentation, the goal is to predict a
foreground-background segmentation from an input image \(I\). A
ground-truth mask is a binary-valued function
\(y:\Omega\to\{0,1\}\), where \(y(p)=1\) indicates foreground and
\(y(p)=0\) indicates background. A model prediction is a soft mask
\(\hat y:\Omega\to[0,1]\), where \(\hat y(p)\) is the predicted
foreground score at \(p\in\Omega\). Given a threshold
\(\tau\in[0,1]\), the corresponding binary predicted mask is
\begin{equation}\label{eq:thresholded-mask}
\hat y_\tau(p)
:=
\mathbf 1_{\{\hat y(p)\ge\tau\}}.
\end{equation}

Two standard overlap measures for binary masks are the Dice
coefficient
\begin{equation}\label{eq:dice-score}
\mathrm{Dice}(y,\hat y_\tau)
:=
\frac{
2\,
\bigl|
\{p\in\Omega:y(p)=1\text{ and }\hat y_\tau(p)=1\}
\bigr|
}{
\bigl|
\{p\in\Omega:y(p)=1\}
\bigr|
+
\bigl|
\{p\in\Omega:\hat y_\tau(p)=1\}
\bigr|
},
\end{equation}
and the intersection-over-union score
\begin{equation}\label{eq:iou}
\mathrm{IoU}(y,\hat y_\tau)
:=
\frac{
\bigl|
\{p\in\Omega:y(p)=1\text{ and }\hat y_\tau(p)=1\}
\bigr|
}{
\bigl|
\{p\in\Omega:y(p)=1\text{ or }\hat y_\tau(p)=1\}
\bigr|
}.
\end{equation}
Both quantities take values in \([0,1]\), with larger values indicating
greater agreement between the binary predicted and ground-truth masks.

For optimization with soft predictions, a standard segmentation
objective is the soft Dice loss
\begin{equation}\label{eq:dice-loss}
\mathcal L_{\mathrm{Dice}}(y,\hat y)
:=
1-
\frac{2\langle y,\hat y\rangle+1}
{\|y\|_1+\|\hat y\|_1+1},
\end{equation}
which is a differentiable relaxation of the Dice coefficient for soft
masks \(\hat y:\Omega\to[0,1]\). In contrast, a topological loss
compares persistence diagrams induced by a fixed cubical filtration in
the sense of Definition~\ref{def:fixed-cubical-filtration}.

\section{Pontryagin duality}\label{subsec:LCA}

We restrict attention to filtrations with finite indexing sets. The associated number of possible nontrivial intervals is therefore finite, and we write \(X\setminus A=\{x_1,\dots,x_N\}\).
We fix $p=1$ and write $\overline d_1$ for the quotient metric on $X/A$
(see Definition~\ref{def:quotient}). Let $W_1$ denote the corresponding
$1$-Wasserstein distance on $D(X,A)$ induced by $(X/A,\overline d_1)$
as in Definition~\ref{def:Wp-diagrams}. Via the Grothendieck
completion (Proposition~\ref{prop:metric-lift}), we equip $K(X,A)$ with
the translation-invariant metric
\[
\rho(a-b,\ c-d)\ :=\ W_1(a+d,\ c+b),\qquad a,b,c,d\in D(X,A).
\]
Since $X\setminus A$ is finite, we identify
\begin{equation}\label{eq:KXA-lattice}
K(X,A)\ \cong\ \mathbb Z^{X\setminus A}
\end{equation}
by sending the class of each $x_j\in X\setminus A$ to the standard basis
vector $e_j$ and the collapsed basepoint $[A]\in X/A$ to $0\in K(X,A)$.

\begin{lemma}\label{prop:K-discrete-lc}
$(K(X,A),\rho)$ is a discrete metric group, hence a locally compact Hausdorff space.
\end{lemma}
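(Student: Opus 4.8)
The plan is to show that the metric $\rho$ is \emph{uniformly discrete} on $K(X,A)$, i.e.\ that there is a positive lower bound on $\rho(\alpha,\beta)$ for distinct $\alpha,\beta$, from which the topology generated by $\rho$ is the discrete topology. By translation invariance (Proposition~\ref{prop:metric-lift}) it suffices to bound $\rho(\alpha,0)$ from below for every $\alpha\neq 0$ in $K(X,A)\cong\mathbb Z^{X\setminus A}$. Unwinding the definition, $\rho(\alpha,0)=W_1(a,b)$ for any representatives $\alpha=a-b$ with $a,b\in D(X,A)$, and an optimal matching $\sigma$ realizing $W_1(a,b)$ decomposes into transport along edges of $X/A$; since $\alpha\neq 0$, at least one unit of mass must be moved a nonzero quotient distance, so $W_1(a,b)\ge \delta$ where
\[
\delta\ :=\ \min\bigl\{\,\overline d_1(\overline x,\overline y)\ :\ \overline x,\overline y\in X/A,\ \overline x\neq\overline y\,\bigr\}.
\]
Because $X\setminus A$ is finite, $X/A$ is a finite metric space, so this minimum is over a finite set of positive numbers and hence $\delta>0$.

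Concretely, I would argue as follows. First, recall from Remark~\ref{rem:quotient} that $\overline d_1$ genuinely metrizes the finite set $X/A$, so $\delta>0$ is well defined. Next, fix $\alpha\neq 0$ and representatives $a,b$. Any matching $\sigma$ between $a$ and $b$ satisfies $(\pi_1)_*\sigma=a$ and $(\pi_2)_*\sigma=b$ modulo $D(A)$; passing to the quotient $X/A$, the pushforwards $\overline a:=q_*a$ and $\overline b:=q_*b$ are finite multisets on $X/A$ with $\overline a\neq\overline b$ (if they were equal, then $a\equiv b \pmod{D(A)}$, forcing $\alpha=a-b=0$ in $K(X,A)$, a contradiction). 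Hence in any matching there is at least one pair $(x,y)\in\sigma$ with $q(x)\neq q(y)$, contributing at least $\overline d_1(q(x),q(y))\ge\delta$ to the $W_1$ cost (for $p=1$ the cost is the plain sum of edge lengths, with no exponent to worry about). Taking the infimum over $\sigma$ gives $W_1(a,b)\ge\delta$, so $\rho(\alpha,0)\ge\delta$. By translation invariance, $\rho(\alpha,\beta)\ge\delta$ for all $\alpha\neq\beta$, so every singleton $\{\alpha\}$ is open (it is the $\delta$-ball around $\alpha$), i.e.\ $(K(X,A),\rho)$ is discrete.

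Finally, a discrete group is automatically locally compact (points have compact neighbourhoods, namely themselves) and Hausdorff (any metric space is), and the group operations are trivially continuous in the discrete topology; combined with the abelian group structure from Definition~\ref{def:groth}, $(K(X,A),\rho)$ is a discrete LCA group. The main point requiring care is the reduction from ``$\alpha\neq 0$ in $K(X,A)$'' to ``the quotient multisets differ'' and thence to ``some matched pair is moved a positive distance''; this uses the cancellativity of $D(X,A)$ and the description of the Grothendieck equivalence, together with the fact that $W_1$ with $p=1$ is an honest sum of quotient distances over matched pairs. Everything else is bookkeeping, and finiteness of $X\setminus A$ is exactly what upgrades the pointwise positivity of $\overline d_1$ to the uniform gap $\delta>0$.
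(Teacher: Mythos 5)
Your proof is correct and follows essentially the same route as the paper: both arguments use finiteness of $X/A$ to extract a uniform positive lower bound ($\delta=d_{\min}$) on all nonzero values of $\rho$, conclude that singletons are open, and then note that a discrete metric group is automatically locally compact Hausdorff. Your version is in fact slightly more careful than the paper's, since you justify explicitly (via cancellativity and the quotient multisets) why any matching for a nonzero $\gamma$ must move at least one unit of mass a positive quotient distance.
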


\begin{proof}
Since $X/A$ is finite and $\overline d_1$ is a metric, there is a minimal
nonzero ground distance
\begin{equation}\label{eq:dmin}
d_{\min}\ :=\ \min\!\Bigl(\ \min_{j\neq k}\overline d_1(x_j,x_k),\ 
                           \min_j \overline d_1(x_j,[A])\ \Bigr)\ >\ 0.
\end{equation}
By the definitions of $W_1$ and $\rho$, every nonzero value
$\rho(\kappa,\lambda)$ is a sum of finitely many terms, each at least
$d_{\min}$, so $\rho(\kappa,\lambda)\ge d_{\min}$ whenever
$\kappa\neq\lambda$. Thus each ball $B_\rho(\kappa,d_{\min}/2)$ is a
singleton, so $(K(X,A),\rho)$ is discrete. Any discrete metric space is
locally compact Hausdorff.
\end{proof}

\subsection{Characters}\label{subsec:characters}

\begin{figure}[ht]
    \centering
    \includegraphics[width=0.55\linewidth]{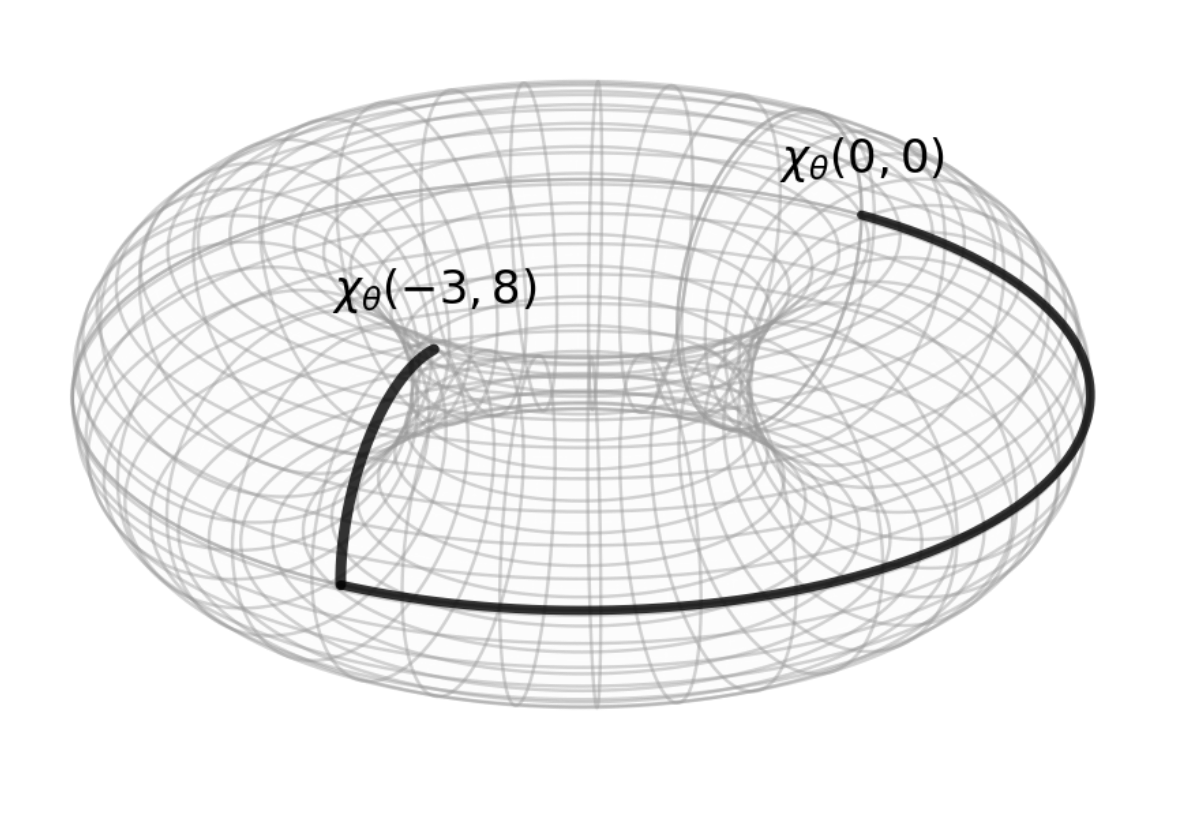}
    \caption{
    The embedding of the virtual persistence diagram from Figure~\ref{fig:toy-example} into the Pontryagin dual torus. Each point $\chi_\theta(0,0)$ and $\chi_\theta(-3,8)$ denotes the character evaluation of the collapsed diagonal point and the off-diagonal point from Figure~\ref{fig:toy-example}, respectively. The black curve represents the resulting character difference function, whose arclength equals the distance from the trivial diagram.}
    \label{fig:torus-embedding}
\end{figure}

The point of passing to the dual is to interpret Fourier coordinates on \(K(X,A)\) geometrically. A character \(\chi_\theta\) is determined by assigning a circular phase \(\theta_j\) to each interval \(x_j\in X/A\), and evaluating \(\chi_\theta\) on a virtual persistence diagram aggregates these phase contributions according to the diagram multiplicities. Thus, a Fourier mode on \(K(X,A)\) is equivalently a circular coordinate system on the finite persistence space \(X/A\). The estimates below show that the relevant notion of frequency is phase instability: a mode is high frequency precisely when nearby intervals receive very different phases.

\begin{figure}[ht]
    \centering
    \includegraphics[width=\linewidth]{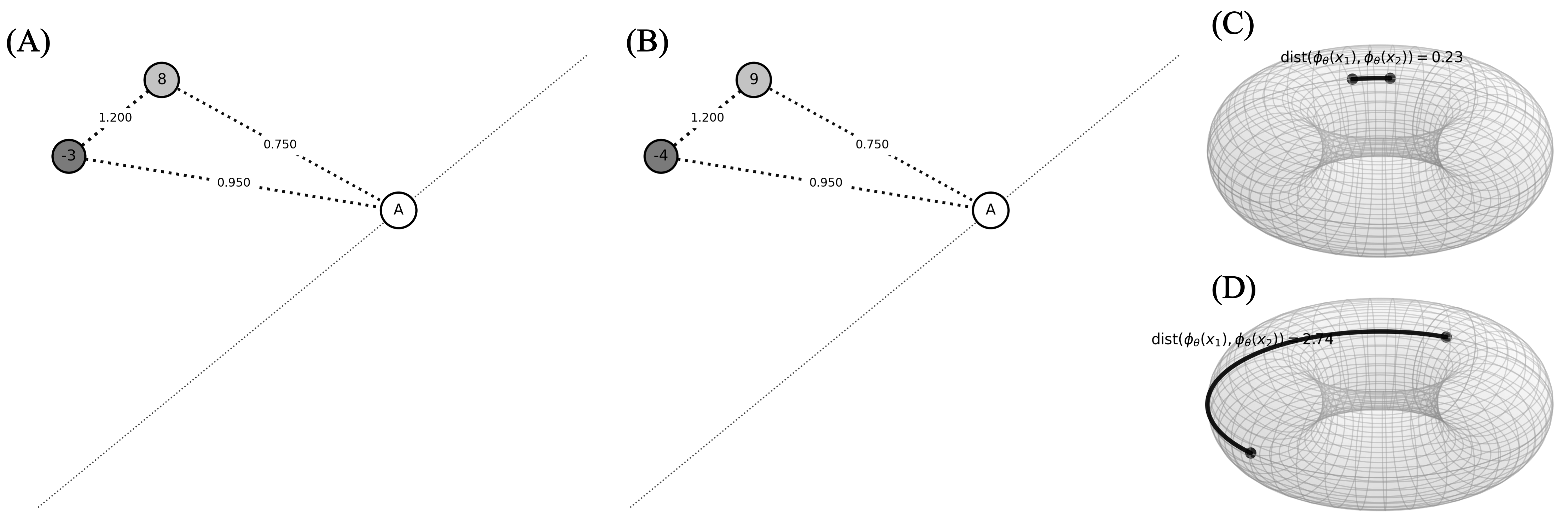}
    \caption{(A) The toy virtual persistence diagram from Figure~\ref{fig:toy-example}. (B) A nearby perturbation obtained by changing the interval multiplicities. (C) A low-frequency character produces only a small phase displacement between the two diagrams, indicating stable behavior under perturbation. (D) A high-frequency character produces a much larger phase displacement for the same perturbation, illustrating the instability of high-frequency Fourier modes.}
    \label{fig:vpd-frequency-stability}
\end{figure}

Since $(K(X,A),\rho)$ is a discrete abelian group (Lemma~\ref{prop:K-discrete-lc}), it is an LCA group in the sense of Definition~\ref{def:characters-dual}, and its Pontryagin dual $\widehat{K(X,A)}$ is compact. We now fix an explicit parametrization of characters.

By the lattice identification in Equation~\eqref{eq:KXA-lattice}, we regard $K(X,A)$ as the free abelian group $\mathbb Z^{X\setminus A}$ by sending the class of each $x_j\in X\setminus A$ to a basis vector $e_j$ and the collapsed basepoint $[A]$ to $0$. Writing an element $k\in K(X,A)$ as a finite sum $k=\sum_j k_j e_j$, we have the following standard description of the dual group; see, for example, \cite{Folland2015CourseAH}.

\begin{proposition}\label{prop:dual} Write $\theta=(\theta_1,\dots,\theta_{|X\setminus A|})$ with $\theta_j\in[0,2\pi)$. Every character $\chi$ on $K(X,A)$ is of the form
\begin{equation}\label{eq:character-formula}
\chi_\theta\!\left(\sum_{j=1}^{|X\setminus A|} n_j e_j\right)
\ =\
\exp\!\left(i\sum_{j=1}^{|X\setminus A|} n_j\,\theta_j\right),
\qquad n_j\in\mathbb Z,
\end{equation}
and this identifies $\widehat{K(X,A)}$ with the $|X\setminus A|$-torus $\mathbb T^{|X\setminus A|}=\mathbb R^{|X\setminus A|}/2\pi\mathbb Z^{|X\setminus A|}$. The pairing between
$K(X,A)$ and its dual is
\begin{equation}\label{eq:dual-pairing}
\langle k,\theta\rangle\ :=\ \sum_{j=1}^{|X\setminus A|} k_j\theta_j\quad(\mathrm{mod}\ 2\pi),
\end{equation}
so that $\chi_\theta(k)=e^{i\langle k,\theta\rangle}$. Haar measure on
$K(X,A)$ is a counting measure, and Haar measure on
$\widehat{K(X,A)}\cong\mathbb T^{|X \setminus A|}$ is normalized Lebesgue measure
$(2\pi)^{-|X\setminus A|}d\theta$ on $[0,2\pi)^{|X\setminus A|}$, so that $\mu\left(\mathbb T^{|X\setminus A|}\right)=1$.
\end{proposition}

\begin{proof}
It is classical that $\widehat{\mathbb Z}\cong\mathbb T$ via
$n\mapsto e^{in\theta}$ with Haar measure $(2\pi)^{-1}d\theta$ on
$[0,2\pi)$, and that Pontryagin duality preserves finite products
\cite{Folland2015CourseAH}. Since $K(X,A)\cong\mathbb Z^{|X\setminus A|}$ is a
finite product of copies of $\mathbb Z$, its dual is a finite product of
circles, i.e.\ $\widehat{K(X,A)}\cong\mathbb T^{|X\setminus A|}$. The displayed formula for
$\chi_\theta$ is the product of the one-dimensional characters, and the
pairing and Haar normalization are inherited from the one-dimensional case.
\end{proof}

For a fixed probability measure $\nu$ on $\widehat{K(X,A)}$, Bochner's theorem (Theorem~\ref{thm:bochner}) applied to the Fourier-Stieltjes transform (Equation~\eqref{eq:fourier-stieltjes}) produces a translation-invariant positive-definite kernel on $K(X,A)$ by averaging the characters $\chi_\theta$. In our setting, $\nu$ will be chosen with density proportional to the spectral multiplier~(Definition~\ref{def:fourier-multiplier}) of the heat semigroup~(The paragraph after Definition~\ref{def:fourier-multiplier}) generated by the discrete Laplacian~(Definition~\ref{def:graph-laplacian}), so individual modes \(\chi_\theta\) aggregate interval phases into Fourier modes on the virtual diagram group.

To control stability of the resulting features with respect to the diagram metric, we need to quantify how sensitive each mode $\chi_\theta$ is to perturbations in $(K(X,A),\rho)$. We measure this via the Lipschitz seminorm (Definition~\ref{def:Lip-seminorm}) $\mathrm{Lip}(\cdot)$ from
Definition~\ref{def:Lip-seminorm}.

For each $\theta\in\mathbb T^{|X\setminus A|}$ we define the \emph{phase
function}
\begin{equation}\label{eq:phase-function}
\phi_\theta: X/A\longrightarrow S^1,\qquad
\phi_\theta([A])=1,\quad \phi_\theta(x_j)=e^{i\theta_j}.
\end{equation}
We equip \(S^1=\mathbb R/2\pi\mathbb Z\) with the quotient metric induced by
the Euclidean metric on \(\mathbb R\), namely
\[
\operatorname{dist}(\theta,\phi)
:=
\inf_{k\in\mathbb Z}
|\theta-\phi+2\pi k|.
\]
This metric realizes the length of the shorter arc between two points on
\(S^1\).
The Lipschitz seminorm
\(\mathrm{Lip}(\phi_\theta)\) measures the largest difference of phases,
using the circle metric \(\operatorname{dist}\) on \(S^1\), per unit
ground distance in \((X/A,\overline d_1)\). Explicitly, it is the
supremum of
\[
\frac{
\operatorname{dist}\bigl(\phi_\theta(x),\phi_\theta(y)\bigr)
}{
\overline d_1(x,y)
}
\]
over distinct \(x,y\in X/A\). Corollary~\ref{cor:edgewise-char} later identifies this supremum, for a
shortest-path graph model of \(X/A\), with the maximum of the differences of phases appearing in Equation~\eqref{eq:edgewise-character-lipschitz}.

\begin{lemma}\label{lem:char-lip}
For every $\theta\in\mathbb T^{|X\setminus A|}$,
\[
\mathrm{Lip}(\chi_\theta)
\ =\
\sup_{\gamma\ne 0}
\frac{\operatorname{dist}(\chi_\theta(\gamma),1)}{\rho(\gamma,0)}.
\]
\end{lemma}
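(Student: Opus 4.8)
The plan is to exploit two structural facts already established: $\chi_\theta$ is a group homomorphism into the unit circle (Proposition~\ref{prop:dual}), and the metric $\rho$ is translation invariant (Proposition~\ref{prop:metric-lift}, as specialized in Section~\ref{subsec:LCA}). Together these reduce the two-variable difference quotient defining $\mathrm{Lip}_\rho(\chi_\theta)$ in Definition~\ref{def:Lip-seminorm} to a one-variable quotient based at the identity $0\in K(X,A)$.

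First I would fix $\alpha\neq\beta$ in $K(X,A)$ and set $\gamma:=\alpha-\beta\neq 0$. Since $\chi_\theta$ is multiplicative, $|\chi_\theta(\beta)|=1$, and $\chi_\theta(0)=1$, we get
\[
|\chi_\theta(\alpha)-\chi_\theta(\beta)|
=|\chi_\theta(\beta)|\,|\chi_\theta(\alpha-\beta)-1|
=|\chi_\theta(\gamma)-1|.
\]
By translation invariance of $\rho$, $\rho(\alpha,\beta)=\rho(\alpha-\beta,0)=\rho(\gamma,0)$. Hence every difference quotient contributing to $\mathrm{Lip}_\rho(\chi_\theta)$ equals $|\chi_\theta(\gamma)-1|/\rho(\gamma,0)$ for the associated $\gamma\neq 0$, and conversely each such expression arises from the pair $(\gamma,0)$.

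Next I would observe that the two index sets match: the map $(\alpha,\beta)\mapsto\alpha-\beta$ from $\{(\alpha,\beta):\alpha\neq\beta\}$ onto $K(X,A)\setminus\{0\}$ is surjective (take $(\gamma,0)$), and the quotient depends on $(\alpha,\beta)$ only through $\gamma$. Taking suprema over each side then gives the claimed equality. The only step requiring a line of care is precisely this passage between the two suprema — that no value is gained or lost under the reparametrization by the difference — and it is immediate from the surjectivity and invariance just noted; there is no genuine analytic obstacle. I would not attempt to verify finiteness or attainment of the right-hand supremum here, since that is not needed for the identity and is taken up in the subsequent reduction of $\mathrm{Lip}_\rho(\chi_\theta)$ to edgewise phase differences on $X/A$.
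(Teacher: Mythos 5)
Your argument is correct and is essentially the paper's own proof: both reduce the two-variable difference quotient to the quotient at the origin using the homomorphism property of $\chi_\theta$ together with translation invariance of $\rho$, then identify the two suprema. Your extra remark on surjectivity of $(\alpha,\beta)\mapsto\alpha-\beta$ just makes explicit what the paper leaves implicit.
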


\begin{proof}
Let $\alpha,\beta\in K(X,A)$ and set $\gamma=\alpha-\beta$.
Since $\chi_\theta$ is a character, we have
\begin{align*}
\chi_\theta(\alpha)\chi_\theta(\beta)^{-1}
&=
\chi_\theta(\alpha-\beta) \\
&=
\chi_\theta(\gamma).
\end{align*}
The geodesic distance on $S^1$ is translation-invariant, so
\[
\operatorname{dist}\bigl(\chi_\theta(\alpha),\chi_\theta(\beta)\bigr)
=
\operatorname{dist}\bigl(\chi_\theta(\gamma),1\bigr).
\]
Translation-invariance of $\rho$ gives \( \rho(\alpha,\beta)=\rho(\gamma,0). \) Taking the supremum over all $\alpha\ne\beta$ is therefore equivalent to taking the supremum over all $\gamma\ne0$, which proves the claim.
\end{proof}

\subsection{Phase identity}\label{subsec:phase-bound}

Characters $\chi_\theta$ on $K(X,A)$ determine phase functions on the interval space $(X/A,\overline d_1)$, and evaluating $\chi_\theta$ on a virtual persistence diagram aggregates these interval phases according to the diagram multiplicities. In this subsection, we make this relationship quantitative by comparing the Lipschitz seminorm of a character $\chi_\theta$ on $(K(X,A),\rho)$ with the Lipschitz seminorm of its associated phase function on $(X/A,\overline d_1)$. This identifies the relevant notion of frequency geometrically: a mode is high frequency precisely when nearby intervals receive very different phases. The resulting estimates show that the stability of the aggregated character is controlled entirely by the stability of the underlying phase map, and will later allow the kernel construction induced by the heat semigroup (The paragraph after Definition~\ref{def:fourier-multiplier}) to suppress precisely those Fourier modes with unstable phase maps.

The next lemma shows that the Lipschitz seminorm of the character agrees exactly with the Lipschitz seminorm of its associated phase function.

\begin{lemma}\label{lem:char-lip-comparison}
For every $\theta\in\mathbb T^{|X\setminus A|}$,
\[
\mathrm{Lip}(\chi_\theta)
=
\mathrm{Lip}(\phi_\theta).
\]
\end{lemma}

\begin{proof}
We first prove that
\[
\mathrm{Lip}(\chi_\theta)
\le
\mathrm{Lip}(\phi_\theta).
\]
Let $\gamma\in K(X,A)$ with $\gamma\ne0$. Choose
$\alpha,\beta\in D(X,A)$ such that $\gamma=\alpha-\beta$.
Viewing $\alpha$ and $\beta$ as atomic measures on
$(X/A,\overline d_1)$, let $m_{x,y}\ge0$ be the multiplicities of an
optimal matching for $W_1(\alpha,\beta)$. Thus
\[
\alpha(x)=\sum_y m_{x,y},\qquad
\beta(y)=\sum_x m_{x,y},
\]
and
\begin{align}
\rho(\gamma,0)
&=
W_1(\alpha,\beta) \nonumber\\
&=
\sum_{x,y}m_{x,y}\,\overline d_1(x,y).
\label{eq:rho-optimal-matching}
\end{align}

By the character formula and the homomorphism property, the value of
\(\chi_\theta\) on \(\gamma=\alpha-\beta\) is
\[
\chi_\theta(\gamma)
=
\prod_x \phi_\theta(x)^{\alpha(x)}
\prod_y \phi_\theta(y)^{-\beta(y)}.
\]
Using the marginal identities
\(\alpha(x)=\sum_y m_{x,y}\) and
\(\beta(y)=\sum_x m_{x,y}\), we rewrite this as
\begin{equation}\label{eq:character-factorization}
\chi_\theta(\gamma)
=
\prod_{x,y}
\bigl(\phi_\theta(x)\phi_\theta(y)^{-1}\bigr)^{m_{x,y}}.
\end{equation}
The geodesic metric on \(S^1\) is bi-invariant, so repeated application of the triangle inequality gives
\begin{align*}
\operatorname{dist}\bigl(\chi_\theta(\gamma),1\bigr)
&\le
\sum_{x,y}
m_{x,y}\,
\operatorname{dist}\bigl(\phi_\theta(x)\phi_\theta(y)^{-1},1\bigr)
\\
&=
\sum_{x,y}
m_{x,y}\,
\operatorname{dist}\bigl(\phi_\theta(x),\phi_\theta(y)\bigr)
\\
&\le
\mathrm{Lip}(\phi_\theta)
\sum_{x,y}
m_{x,y}\,\overline d_1(x,y)
\\
&=
\mathrm{Lip}(\phi_\theta)\,\rho(\gamma,0).
\end{align*}
Dividing by \(\rho(\gamma,0)\) and taking the supremum over
\(\gamma\ne0\) yields
\[
\mathrm{Lip}(\chi_\theta)
\le
\mathrm{Lip}(\phi_\theta).
\]

We now prove the reverse inequality. If
\(\mathrm{Lip}(\phi_\theta)=0\), then the claim is immediate.
Otherwise, since \(X/A\) is finite, there exist distinct
\(x,y\in X/A\) such that
\[
\operatorname{dist}\bigl(\phi_\theta(x),\phi_\theta(y)\bigr)
=
\mathrm{Lip}(\phi_\theta)\,
\overline d_1(x,y).
\]
Let \(\gamma\in K(X,A)\) be represented by the pair of singleton
diagrams \((x,y)\), where the singleton at the basepoint \([A]\) is
identified with the zero diagram. Then
\[
\rho(\gamma,0)
=
\overline d_1(x,y),
\qquad
\chi_\theta(\gamma)
=
\phi_\theta(x)\phi_\theta(y)^{-1}.
\]
Hence
\begin{align*}
\frac{
\operatorname{dist}\bigl(\chi_\theta(\gamma),1\bigr)
}{
\rho(\gamma,0)
}
&=
\frac{
\operatorname{dist}\bigl(\phi_\theta(x)\phi_\theta(y)^{-1},1\bigr)
}{
\overline d_1(x,y)
}
\\
&=
\frac{
\operatorname{dist}\bigl(\phi_\theta(x),\phi_\theta(y)\bigr)
}{
\overline d_1(x,y)
}
\\
&=
\mathrm{Lip}(\phi_\theta).
\end{align*}
Taking the supremum over \(\gamma\ne0\) gives
\[
\mathrm{Lip}(\chi_\theta)
\ge
\mathrm{Lip}(\phi_\theta).
\]
Combining the two inequalities proves the claim.
\end{proof}

Lemma~\ref{lem:char-lip-comparison} identifies
$\mathrm{Lip}(\chi_\theta)$ with the Lipschitz seminorm of the
associated phase function $\phi_\theta$ on
$(X/A,\overline d_1)$. In the graph-based discretizations used later,
this seminorm can be computed entirely from differences of phases on
the underlying interval space.

\subsection{Edgewise identity}\label{subsec:edgewise-bound}

Because $(X/A,\overline d_1)$ is finite, we may fix a connected weighted graph whose shortest-path metric coincides with $\overline d_1$. The next corollary shows that, with respect to such a graph model, the Lipschitz seminorm of a character $\chi_\theta$ is completely determined by the phase variation of its associated phase function along graph edges.

\begin{corollary}\label{cor:edgewise-char}
Let $\theta\in\mathbb T^{|X\setminus A|}$ and let $\phi_\theta$ be the
associated phase function. Suppose $H=(X/A,E)$ is a connected weighted
graph whose shortest-path metric coincides with $\overline d_1$. Then
\begin{equation}\label{eq:edgewise-character-lipschitz}
\mathrm{Lip}(\chi_\theta)
=
\max_{(u,v)\in E}
\frac{
\operatorname{dist}\bigl(\phi_\theta(u),\phi_\theta(v)\bigr)
}{
\overline d_1(u,v)
}.
\end{equation}
In particular, $\mathrm{Lip}(\chi_\theta)$ can be computed in
$O(|E|)$ time from the differences of phases $\phi_\theta$.
\end{corollary}

\begin{proof}
Let $f:X/A\to S^1$ be any function. Since the metric on $X/A$ is the
shortest-path metric of $H$, for any distinct $x,y\in X/A$ there exists
a path
\[
x=u_0,u_1,\dots,u_m=y
\]
such that
\[
\overline d_1(x,y)
=
\sum_{k=0}^{m-1}\overline d_1(u_k,u_{k+1}).
\]
By the triangle inequality for $\operatorname{dist}$,
\[
\operatorname{dist}\bigl(f(x),f(y)\bigr)
\le
\sum_{k=0}^{m-1}
\operatorname{dist}\bigl(f(u_k),f(u_{k+1})\bigr).
\]
Therefore,
\[
\frac{
\operatorname{dist}\bigl(f(x),f(y)\bigr)
}{
\overline d_1(x,y)
}
\le
\max_{0\le k<m}
\frac{
\operatorname{dist}\bigl(f(u_k),f(u_{k+1})\bigr)
}{
\overline d_1(u_k,u_{k+1})
}.
\]
Taking the supremum over all $x\ne y$ gives
\[
\mathrm{Lip}(f)
\le
\max_{(u,v)\in E}
\frac{
\operatorname{dist}\bigl(f(u),f(v)\bigr)
}{
\overline d_1(u,v)
}.
\]
The reverse inequality follows because every edge is a pair of points in
$X/A$. Hence
\[
\mathrm{Lip}(f)
=
\max_{(u,v)\in E}
\frac{
\operatorname{dist}\bigl(f(u),f(v)\bigr)
}{
\overline d_1(u,v)
}.
\]

Applying this identity to $f=\phi_\theta$ and using
Lemma~\ref{lem:char-lip-comparison} gives the stated formula for
$\mathrm{Lip}(\chi_\theta)$. The complexity statement follows
because the maximum is computed by a single pass over the edge set $E$.
\end{proof}

The corollary reduces the global Lipschitz seminorm of
$\chi_\theta$ on $(K(X,A),\rho)$ to a maximization over local phase
differences across graph edges. In the next section, the graph
Dirichlet energy (Definition~\ref{def:graph-dirichlet}) will be defined from the same edgewise phase
differences, but by taking a weighted sum of their squares over all
edges rather than their maximum.

\section{Heat flow and spectral multipliers on the dual}\label{sec:stable-multipliers}

The previous section identifies characters $\chi_\theta$ on the virtual diagram group $K(X,A)$ with the phase functions $\phi_\theta$ from Equation~\eqref{eq:phase-function} and shows in Corollary~\ref{cor:edgewise-char} that their Lipschitz seminorms with respect to the VPD metric are determined by differences of phases. We now pass to the dual side: characters \(\chi_\theta\) on \(K(X,A)\), parametrized by the dual torus \(\widehat{K(X,A)}\cong\mathbb T^{|X\setminus A|}\), will be weighted according to the phase variation of their associated circular coordinate systems --- in particular, that of the heat semigroup (After Definition~\ref{def:fourier-multiplier}) on the discrete Laplacian (Definition~\ref{def:graph-laplacian}) --- which plays the role of an energy associated to each mode (Definition~\ref{def:characters-dual}) $\chi_\theta$. In this way, kernels on $K(X,A)$ arise as Fourier-Stieltjes averages over the torus, in exact analogy with Bochner-type constructions on $\mathbb R^d$. In this section, we define the Laplacian via a \emph{Dirichlet form} (Definition~\ref{def:graph-dirichlet}) on the graph model of $X/A$, express it explicitly in terms of differences of phases, and then define the heat semigroup (After Definition~\ref{def:fourier-multiplier}) on it as a spectral multiplier to build translation-invariant kernels on virtual persistence diagrams.

\subsection{Graph Laplacian and Dirichlet energy}\label{subsec:dirichlet-symbol}

Our goal in this section is to construct, in the next subsection, translation-invariant positive definite kernels on the virtual diagram group $K(X,A)$ by integrating characters against a nonnegative weight on the dual torus.  By Bochner's theorem on locally compact abelian groups (Theorem~\ref{thm:bochner}), such kernels arise as Fourier-Stieltjes transforms of finite positive measures.  In the Euclidean prototype, the Gaussian kernel is obtained in exactly this way from the nonnegative eigenvalue function of the Laplacian.  We now recall that pattern and then adapt it to $X/A$.

On $\mathbb R^d$ we start with the \emph{Dirichlet energy} of a smooth, compactly
supported function $f$,
\[
  \mathcal E(f)
  := \int_{\mathbb R^d} |\nabla f(x)|^2\,dx.
\]
Writing $\nabla f = (\partial_1 f,\dots,\partial_d f)$ and integrating by parts
componentwise (no boundary term because $f$ is compactly supported) gives
\begin{align*}
  \mathcal E(f)
  & = \sum_{j=1}^d \int_{\mathbb R^d} |\partial_j f(x)|^2\,dx \\
  & = \sum_{j=1}^d \int_{\mathbb R^d} \overline{f(x)}\,(-\partial_j^2 f)(x)\,dx \\
  & = \int_{\mathbb R^d} \overline{f(x)}\,(-\Delta f)(x)\,dx.
\end{align*}
Thus the quadratic form $f\mapsto\mathcal E(f)$ has (formal) generator
$L := -\Delta$ in the sense that
\[
  \mathcal E(f) = \langle f, Lf\rangle_{L^2(\mathbb R^d)}
  \qquad\text{for }f\in C_c^\infty(\mathbb R^d).
\]
We use the unitary Fourier transform
\[
  \widehat f(\xi)
  := (2\pi)^{-d/2} \int_{\mathbb R^d} e^{-i x\cdot\xi} f(x)\,dx.
\]
Differentiating under the integral sign yields
\[
  \widehat{\partial_j f}(\xi) = i\xi_j \widehat f(\xi),
  \qquad
  \widehat{\partial_j^2 f}(\xi) = -\xi_j^2 \widehat f(\xi),
\]
and hence
\[
  \widehat{Lf}(\xi)
  = \widehat{-\Delta f}(\xi)
  = \sum_{j=1}^d \xi_j^2 \widehat f(\xi)
  = |\xi|^2\,\widehat f(\xi).
\]
So the Fourier transform diagonalizes $L$ with eigenvalue function
$\lambda(\xi)=|\xi|^2$.

For $t>0$ we define the associated heat operator $e^{-tL}$ on such $f$ by
multiplying by the spectral multiplier $e^{-t|\xi|^2}$ (Definition~\ref{def:fourier-multiplier}) in Fourier variables:
\begin{equation}\label{eq:euclidean-heat-multiplier}
  \widehat{e^{-tL}f}(\xi) := e^{-t|\xi|^2}\,\widehat f(\xi).
\end{equation}
Let
\[
  p_t(z) := \mathcal F^{-1}\big(e^{-t|\xi|^2}\big)(z)
  = (2\pi)^{-d/2} \int_{\mathbb R^d} e^{i z\cdot\xi} e^{-t|\xi|^2}\,d\xi.
\]
The basic one-dimensional Gaussian integral $\int_{\mathbb R} e^{-t\xi^2 + i z\xi}\,d\xi$ factors over coordinates and gives the explicit formula
\[
  p_t(z)
  = \frac{1}{(4\pi t)^{d/2}} \exp\!\Big(-\frac{|z|^2}{4t}\Big).
\]

If we write $t = \sigma^2/2$ for some $\sigma>0$, then
\[
  p_t(z) = \frac{1}{(2\pi \sigma^2)^{d/2}} \exp\!\Big(-\frac{|z|^2}{2\sigma^2}\Big),
\]
Thus for each fixed $t>0$ the heat kernel $p_t$ is a centered Gaussian density with covariance $\sigma^2 I_d$, and up to the constant factor $(2\pi \sigma^2)^{-d/2}$ it has the same radial form as the Gaussian kernel $z\mapsto \exp(-|z|^2/(2\sigma^2))$.

Since the inverse Fourier transform turns products into convolutions,
\[
  (e^{-tL}f)(x)
  = \mathcal F^{-1}\!\big(e^{-t|\xi|^2}\widehat f(\xi)\big)(x)
  = (p_t * f)(x)
  = \int_{\mathbb R^d} p_t(x-y)\,f(y)\,dy.
\]
In particular, \(p_t(x-y)\) is a translation-invariant kernel obtained by
Fourier-transforming the nonnegative function
\(\xi\mapsto e^{-t|\xi|^2}\). By Bochner's theorem
(Theorem~\ref{thm:bochner}), this kernel is positive definite because
the spectral multiplier \(e^{-t|\xi|^2}\) (Definition~\ref{def:fourier-multiplier}) is nonnegative.

We now recall the finite analogue on a weighted graph.  Let $V$ be a finite set and
let $H = (V,E,w)$ be a finite connected weighted graph with symmetric edge weights
$w_{uv} = w_{vu}\ge 0$, extended by $w_{uv}=0$ if $\{u,v\}\notin E$.
The graph Laplacian (Definition~\ref{def:graph-laplacian}) is
\[
  (Lf)(u) := \sum_{v\in V} w_{uv}\,\bigl(f(u)-f(v)\bigr),
  \qquad u\in V,
\]
equivalently $L = D-A$ in matrix form, where $D_{uu}=\sum_{v} w_{uv}$ and
$A_{uv}=w_{uv}$.
The associated graph \emph{Dirichlet form} (Definition \ref{def:graph-dirichlet}) is
\[
  \mathcal E(f)
  := \frac12\sum_{u,v\in V} w_{uv}\,\bigl\lvert f(u)-f(v)\bigr\rvert^2
  = \langle f,Lf\rangle_{\ell^2(V)}.
\]
This is the discrete analogue of $\int \lvert \nabla f\rvert^2$: it is nonnegative,
vanishes exactly on constant functions, and increases as $f$ increases in frequency across edges with large weight.  In particular, $L$ is positive semidefinite
and all of its eigenvalues are nonnegative, so the heat semigroup (After Definition~\ref{def:fourier-multiplier}) $(e^{-tL})_{t>0}$ induced by the graph representation
and its associated kernels are well defined.

Let \(V:=X/A\). Since \(V\) is finite, the metric
\(\overline d_1\) determines a complete graph with edge length
\(\overline d_1(u,v)\) between distinct vertices \(u,v\in V\). More
generally, we may work with any connected graph \(H=(V,E)\) whose
edges, assigned the lengths \(\overline d_1(u,v)\), induce
\(\overline d_1\) as the shortest-path metric. On the chosen edge set
\(E\), we fix symmetric positive weights \(w_{uv}=w_{vu}>0\) defining
the graph Laplacian and Dirichlet form. The choice
\(w_{uv}=\overline d_1(u,v)\) is intrinsic, but not assumed here.

For each $\theta\in\mathbb T^{|X\setminus A|}$, define \( \lambda(\theta) := \mathcal E(\phi_\theta) \) using the Dirichlet form from Equation~\eqref{eq:dirichlet-form}. Explicitly,
\begin{equation}\label{eq:lambda-dirichlet}
  \lambda(\theta)
  =
  \frac12\sum_{u,v\in V}
  w_{uv}\,\bigl|\phi_\theta(u)-\phi_\theta(v)\bigr|^2.
\end{equation}
Since
\[
  |z-z'|^2
  =
  2\bigl(1-\cos(\operatorname{dist}(z,z'))\bigr)
\]
for $z,z'\in S^1$, and since $w_{uv}=0$ whenever $\{u,v\}\notin E$,
the \emph{graph Dirichlet form} can be rewritten as
\begin{equation}\label{eq:lambda-edge-cosine}
  \lambda(\theta)
  =
  \sum_{\{u,v\}\in E}
  w_{uv}\,
  \Bigl(1-\cos\bigl(\operatorname{dist}(\phi_\theta(u),\phi_\theta(v))\bigr)\Bigr).
\end{equation}
The function $\lambda:\mathbb T^{|X\setminus A|}\to[0,\infty)$ is
continuous, and $\lambda(0)=0$.

\begin{figure}[ht]
    \centering
    \includegraphics[width=0.85\linewidth]{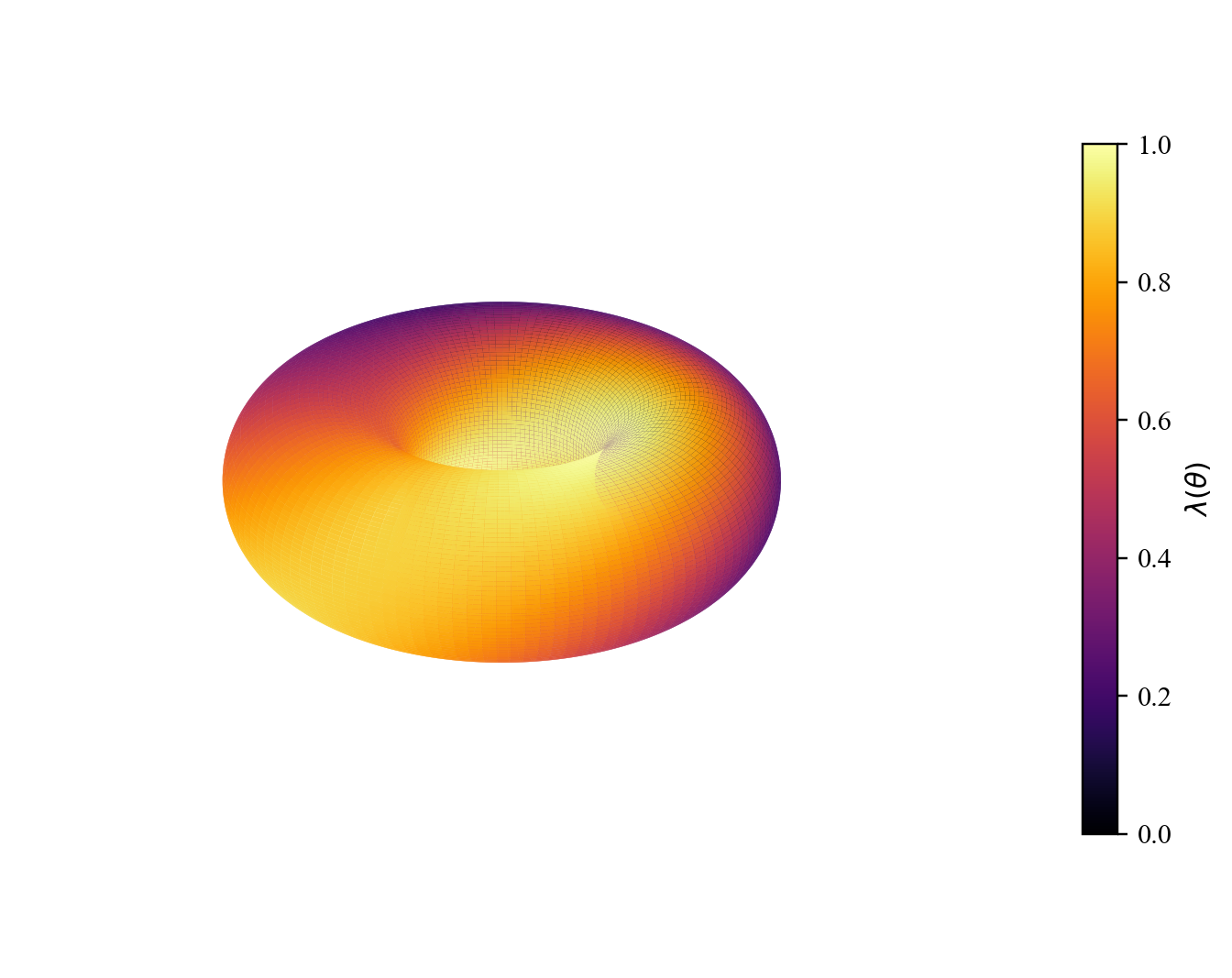}
    \caption{Heatmap of the Laplacian applied to the character embedding shown in Fig.~\ref{fig:torus-embedding}, visualized over the Pontryagin dual torus.}
    \label{fig:torus-laplacian}
\end{figure}

The quantity $\lambda(\theta)$ assigns to the character
$\chi_\theta$ the graph Dirichlet energy of its associated phase
function $\phi_\theta$. It therefore measures the total weighted phase
variation of $\phi_\theta$ across
$(X/A,\overline d_1)$ as a weighted (complete) graph.

In the next subsection, we use the heat multipliers
$\theta\mapsto e^{-t\lambda(\theta)}$ and their Fourier-Stieltjes
transforms to build translation-invariant heat kernels on $K(X,A)$ that
play the role of Gaussian kernels on virtual persistence diagrams.

\subsection{Phase-energy comparison}\label{subsec:phase-energy}

We now compare the spectral quantity $\lambda(\theta)$ with the
Lipschitz seminorm $\mathrm{Lip}(\chi_\theta)$.

For each edge $\{u,v\}\in E$, write
\begin{equation}\label{eq:edge-length-phase-difference}
\ell_{uv} := \overline d_1(u,v),
\qquad
\Delta_{uv}
:=
\operatorname{dist}\bigl(\phi_\theta(u),\phi_\theta(v)\bigr)
\in[0,\pi].
\end{equation}
Then
\begin{equation}\label{eq:lambda-delta}
\lambda(\theta)
=
\sum_{\{u,v\}\in E}
w_{uv}\,\bigl(1-\cos(\Delta_{uv})\bigr),
\end{equation}
and
\begin{equation}\label{eq:lip-delta-edge-max}
\mathrm{Lip}(\chi_\theta)
=
\max_{\{u,v\}\in E}
\frac{\Delta_{uv}}{\ell_{uv}}.
\end{equation}

We also write
\begin{align}\label{eq:edge-weight-extrema}
w_{\min}
&:=
\min\{w_{uv}: \{u,v\}\in E\}, \\
w_{\max}
&:=
\max\{w_{uv}: \{u,v\}\in E\}, \\
M &:= |E|,
\end{align}
and
\begin{equation}\label{eq:edge-length-extrema}
d_{\min}
:=
\min\{\overline d_1(u,v): \{u,v\}\in E\},
\qquad
d_{\max}
:=
\max\{\overline d_1(u,v): \{u,v\}\in E\}.
\end{equation}

\begin{lemma}\label{lem:lambda-vs-L}
For every $\theta\in\mathbb T^{|X\setminus A|}$,
\[
\frac{2\,w_{\min}\,d_{\min}^2}{\pi^2}\,
\mathrm{Lip}(\chi_\theta)^2
\le
\lambda(\theta)
\le
\frac12\,w_{\max}\,M\,d_{\max}^2\,
\mathrm{Lip}(\chi_\theta)^2.
\]
\end{lemma}

\begin{proof}
Fix $\theta\in\mathbb T^{|X\setminus A|}$. By
Corollary~\ref{cor:edgewise-char},
\[
\mathrm{Lip}(\chi_\theta)
=
\max_{\{u,v\}\in E}
\frac{\Delta_{uv}}{\ell_{uv}}.
\]
Choose an edge $\{u_*,v_*\}\in E$ such that
\[
\mathrm{Lip}(\chi_\theta)
=
\frac{\Delta_{u_*v_*}}{\ell_{u_*v_*}}.
\]
If
\[
\Delta_{\max}
:=
\max_{\{u,v\}\in E}\Delta_{uv},
\]
then
\[
\Delta_{\max}
\ge
\Delta_{u_*v_*}
=
\ell_{u_*v_*}\,\mathrm{Lip}(\chi_\theta)
\ge
d_{\min}\,\mathrm{Lip}(\chi_\theta).
\]
On the other hand, for every edge $\{u,v\}\in E$,
\[
\Delta_{uv}
\le
\ell_{uv}\,\mathrm{Lip}(\chi_\theta)
\le
d_{\max}\,\mathrm{Lip}(\chi_\theta),
\]
so \( \Delta_{\max} \le d_{\max}\,\mathrm{Lip}(\chi_\theta). \)

By definition,
\[
\lambda(\theta)
=
\sum_{\{u,v\}\in E}
w_{uv}\,\bigl(1-\cos(\Delta_{uv})\bigr).
\]
For $t\in[0,\pi]$,
\[
\frac{2}{\pi^2}\,t^2
\le
1-\cos t
\le
\frac12\,t^2.
\]
Applying these inequalities edgewise gives
\[
\frac{2w_{\min}}{\pi^2}
\sum_{\{u,v\}\in E}\Delta_{uv}^2
\le
\lambda(\theta)
\le
\frac12\,w_{\max}
\sum_{\{u,v\}\in E}\Delta_{uv}^2.
\]
Since \( \Delta_{uv}\le\Delta_{\max} \)
for every edge,
\[
\Delta_{\max}^2
\le
\sum_{\{u,v\}\in E}\Delta_{uv}^2
\le
M\,\Delta_{\max}^2.
\]
Therefore,
\[
\frac{2w_{\min}}{\pi^2}\,\Delta_{\max}^2
\le
\lambda(\theta)
\le
\frac12\,w_{\max}\,M\,\Delta_{\max}^2.
\]
Combining this with the bounds on $\Delta_{\max}$ yields
\[
\frac{2\,w_{\min}\,d_{\min}^2}{\pi^2}\,
\mathrm{Lip}(\chi_\theta)^2
\le
\lambda(\theta)
\le
\frac12\,w_{\max}\,M\,d_{\max}^2\,
\mathrm{Lip}(\chi_\theta)^2.
\]
\end{proof}

Lemma~\ref{lem:lambda-vs-L} gives a two-sided graph-dependent
comparison between $\lambda(\theta)$ and
$\mathrm{Lip}(\chi_\theta)^2$. Thus, the heat multiplier
$e^{-t\lambda(\theta)}$ suppresses the high-frequency characters with respect to the VPD metric. When the edge
lengths and weights have bounded aspect ratios, the comparison constants
are uniform, so the Dirichlet energy (Equation~\eqref{eq:lambda-dirichlet}) is uniformly comparable to the
squared VPD Lipschitz seminorm.

\subsection{Lipschitz bounds for heat-weighted Fourier-Stieltjes transforms}\label{subsec:lipschitz-heat}

We now use the heat multipliers $\theta\mapsto e^{-t\lambda(\theta)}$
to construct translation-invariant Fourier--Stieltjes kernels on
$K(X,A)$ and study their Lipschitz behavior with respect to the VPD
metric. Since $\lambda(\theta)$ is comparable to
$\mathrm{Lip}(\chi_\theta)^2$, the heat weighting suppresses
Fourier modes (Definition~\ref{def:characters-dual}) whose associated phase functions exhibit unstable phase
variation across nearby points of $(X/A,\overline d_1)$.

\begin{definition}\label{def:heat-measure}
For $t>0$ define the \emph{heat measure} on $\mathbb T^{|X\setminus A|}$ by
\begin{equation}\label{eq:heat-measure}
  d\nu_t(\theta)\ :=\ e^{-t\lambda(\theta)}\,d\mu(\theta),
  \qquad \theta\in\mathbb T^{|X\setminus A|},
\end{equation}
where $\mu$ is the normalized Haar measure
on $\mathbb T^{|X\setminus A|}$.
The corresponding Fourier--Stieltjes transform is
\begin{equation}\label{eq:heat-fourier-stieltjes}
  F_{\nu_t}(\alpha)
  \ :=\ \int_{\mathbb T^{|X\setminus A|}}\chi_\theta(\alpha)\,e^{-t\lambda(\theta)}\,d\mu(\theta),
  \qquad \alpha\in K(X,A),
\end{equation}
and, as in the Fourier--Stieltjes construction (Equation~\eqref{eq:fourier-stieltjes}), it induces a translation-invariant
positive definite kernel on $K(X,A)$ by
$k_{\nu_t}(\alpha,\beta):=F_{\nu_t}(\alpha-\beta)$.
\end{definition}

Thus $t\mapsto\nu_t$ is the heat flow on the dual torus, and
$t\mapsto F_{\nu_t}$ is the induced family of translation-invariant kernels
on the virtual diagram group.

We first record a general Lipschitz estimate for Fourier--Stieltjes transforms with respect to the VPD metric.

\begin{figure}[ht]
    \centering

    \begin{minipage}{0.32\linewidth}
        \centering
        \includegraphics[width=\linewidth]{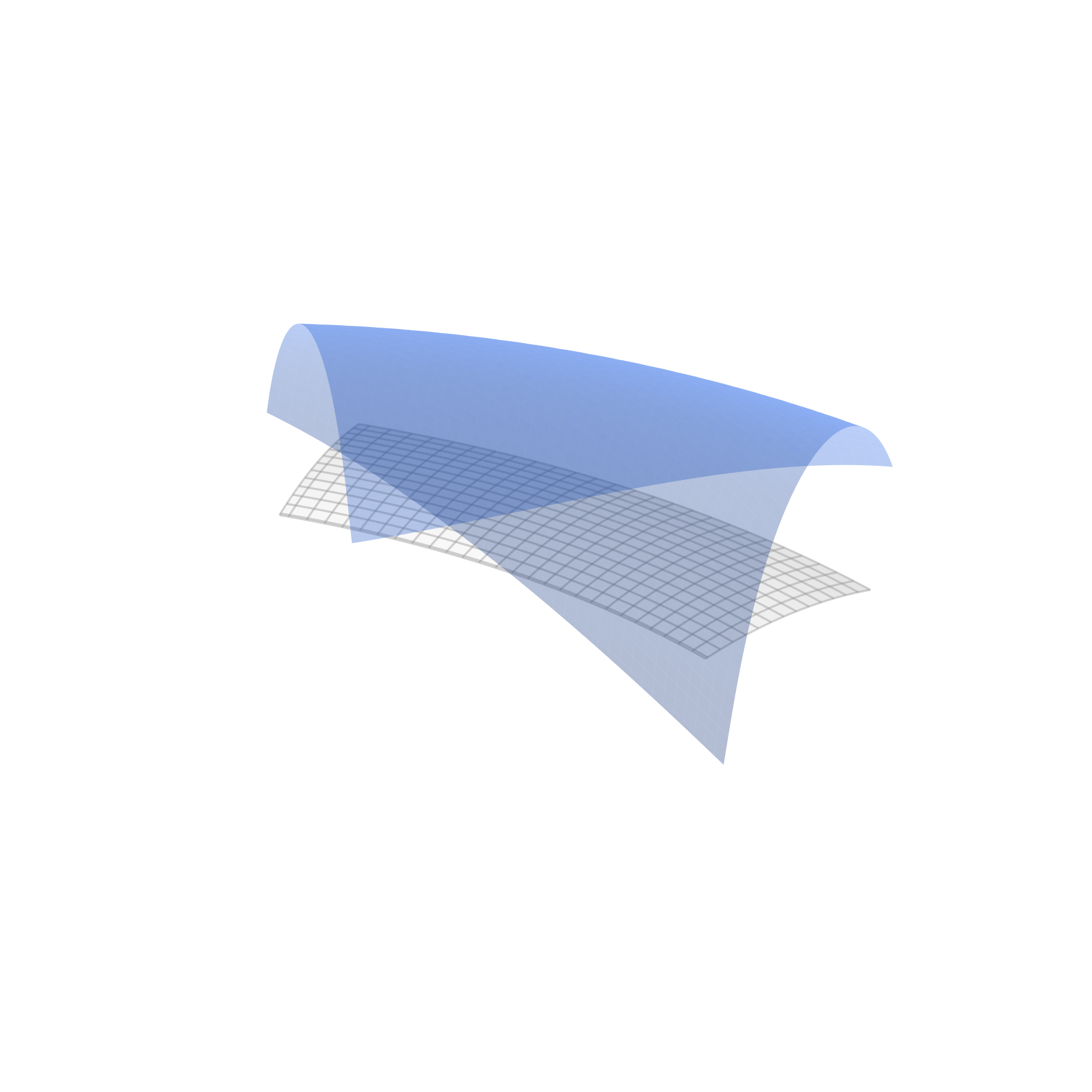}
    \end{minipage}\hfill
    \begin{minipage}{0.32\linewidth}
        \centering
        \includegraphics[width=\linewidth]{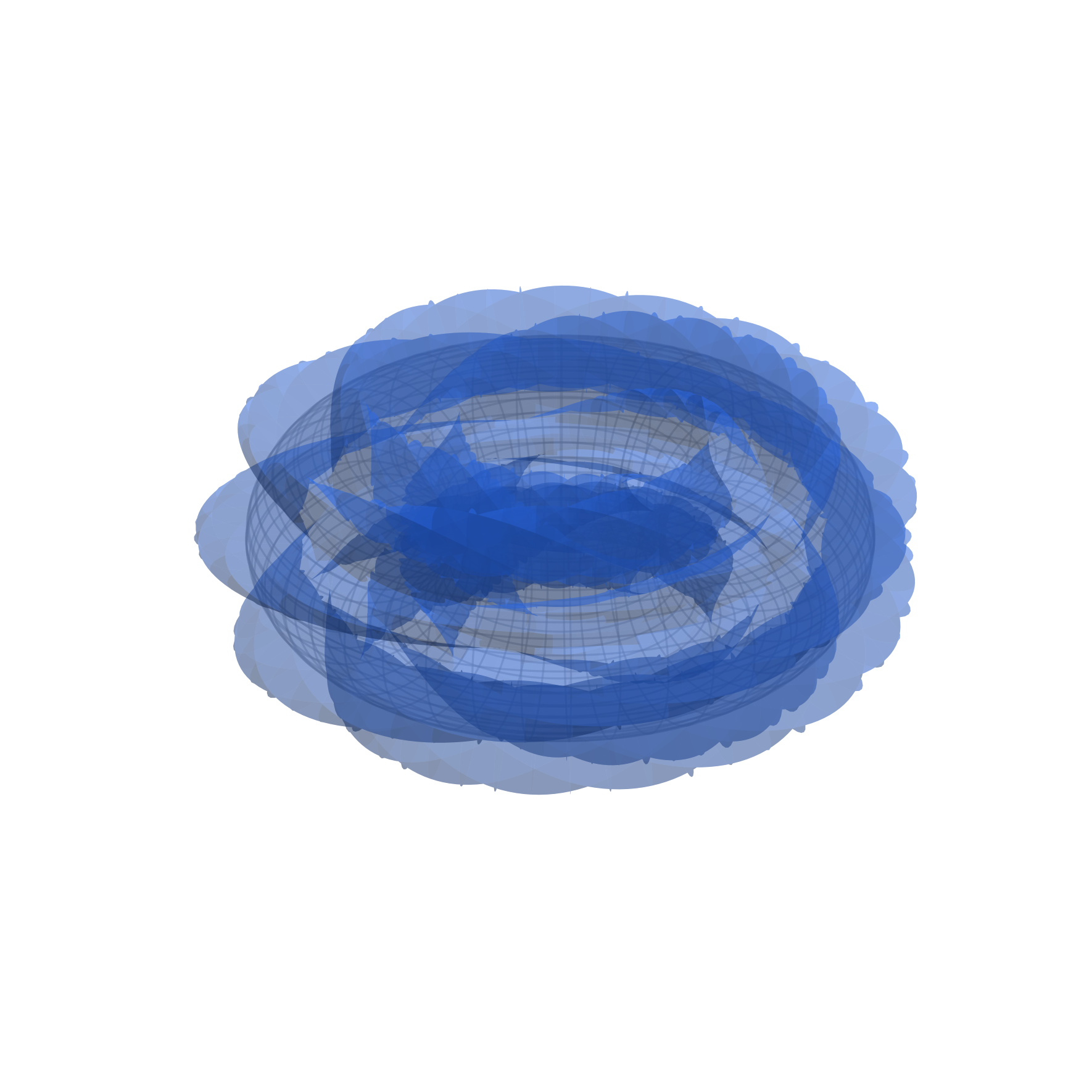}
    \end{minipage}\hfill
    \begin{minipage}{0.32\linewidth}
        \centering
        \includegraphics[width=\linewidth]{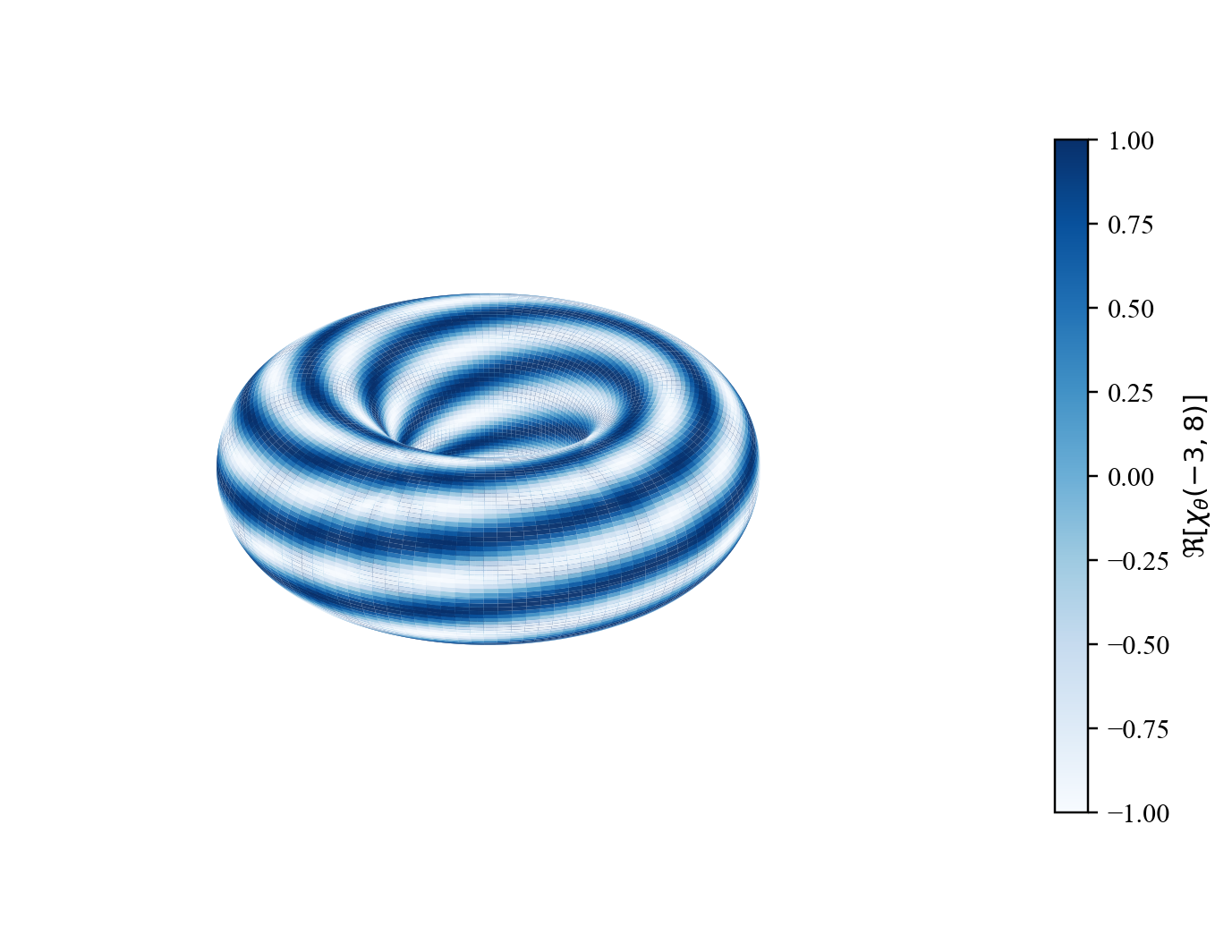}
    \end{minipage}

    \vspace{0.8em}

    \begin{minipage}{0.32\linewidth}
        \centering
        \includegraphics[width=\linewidth]{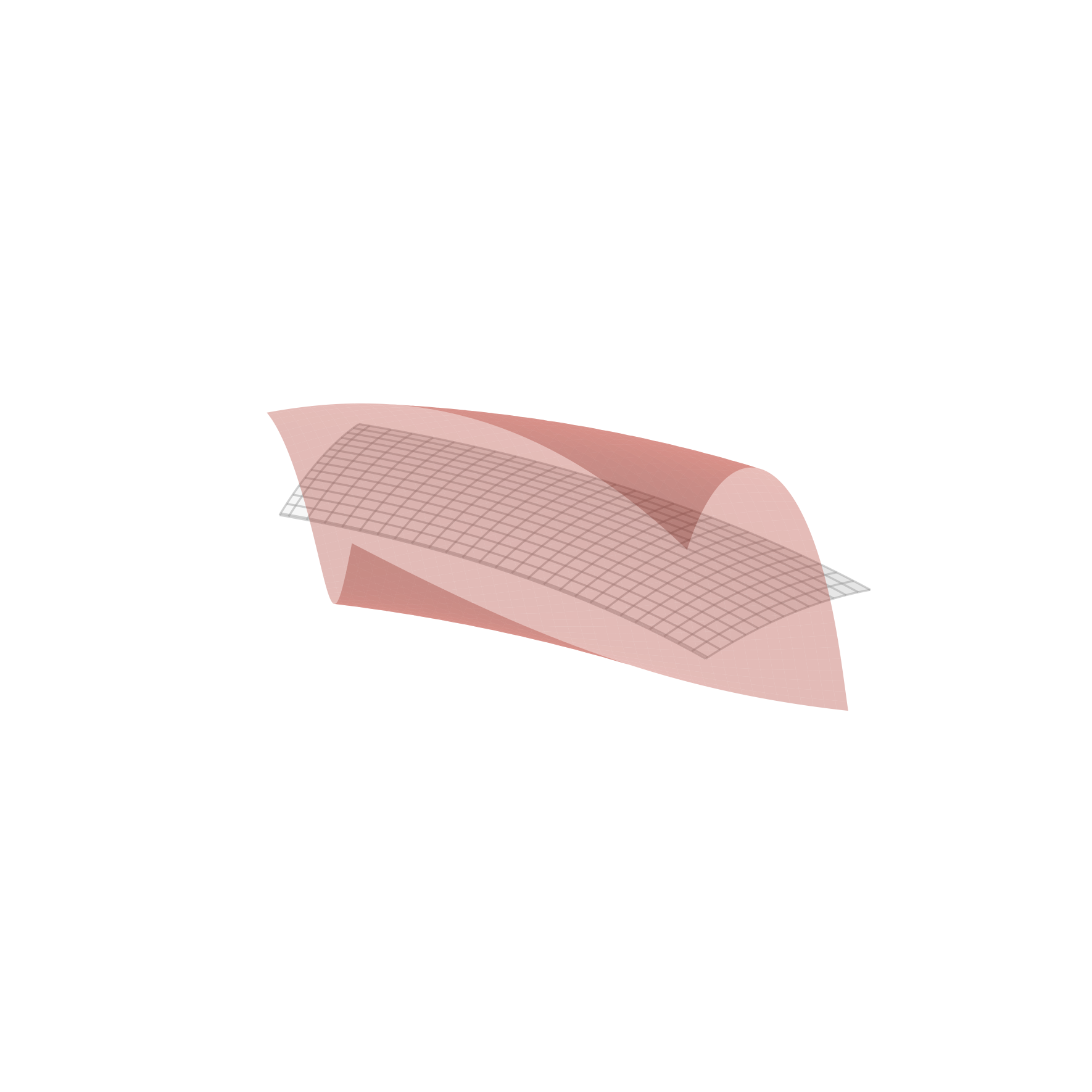}
    \end{minipage}\hfill
    \begin{minipage}{0.32\linewidth}
        \centering
        \includegraphics[width=\linewidth]{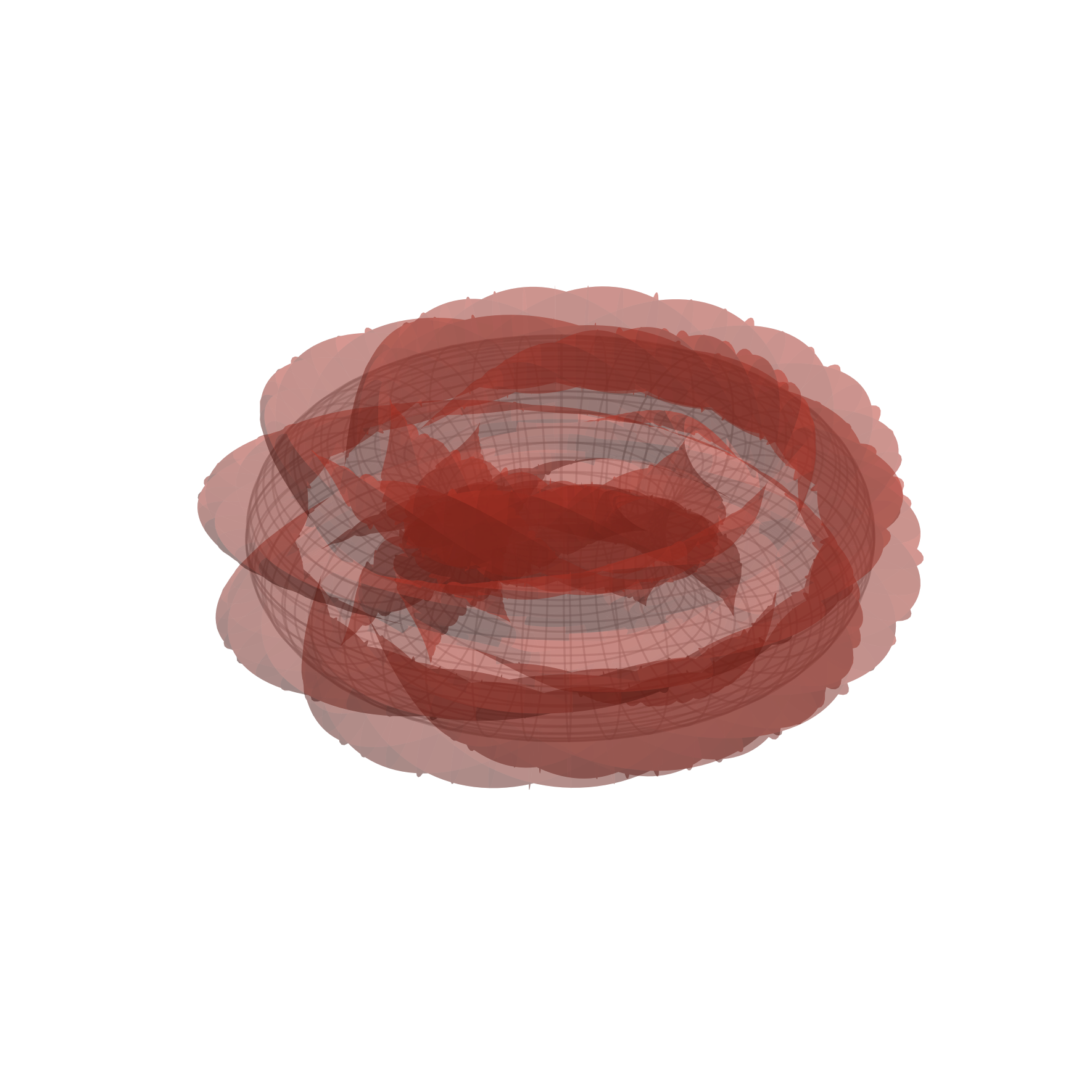}
    \end{minipage}\hfill
    \begin{minipage}{0.32\linewidth}
        \centering
        \includegraphics[width=\linewidth]{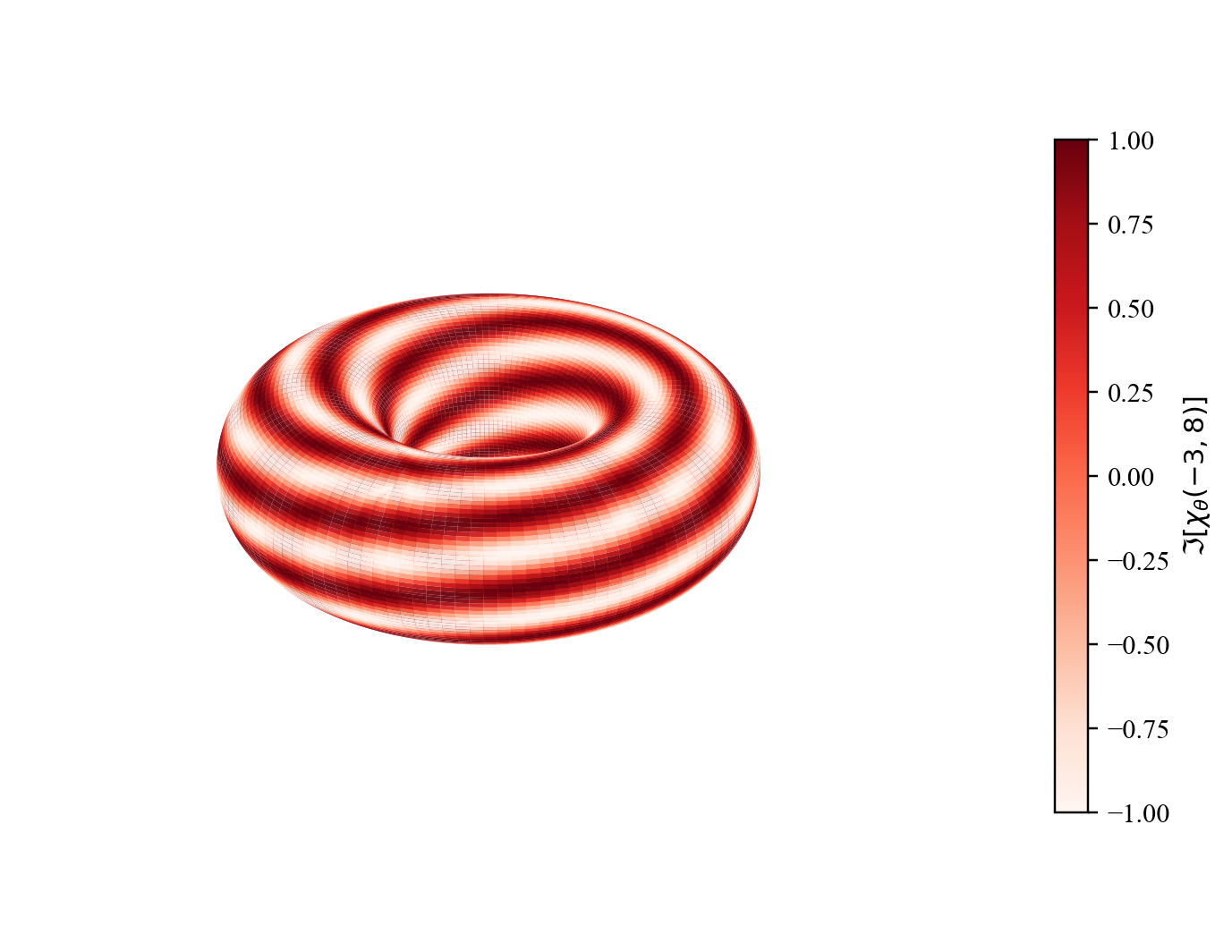}
    \end{minipage}

    \caption{
    Fourier transform of the virtual persistence diagram from Fig.~\ref{fig:toy-example}, evaluated over the Pontryagin dual torus from Fig.~\ref{fig:torus-embedding}.
    Columns show patch, surface, and heatmap visualizations; rows show the real and imaginary parts, respectively.}
    \label{fig:fourier-vpd}
\end{figure}

\begin{lemma}\label{lem:multiplier-stability}
Let $\nu$ be a finite complex Borel measure on
$\mathbb T^{|X\setminus A|}$ and let $F_\nu$ be its
Fourier--Stieltjes transform as in Definition~\ref{def:FS}. Then
\[
  \mathrm{Lip}(F_\nu)
  \le
  \int_{\mathbb T^{|X\setminus A|}}
  \mathrm{Lip}(\chi_\theta)\,d|\nu|(\theta).
\]
\end{lemma}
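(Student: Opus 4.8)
The plan is to exploit the integral representation $F_\nu(\alpha) = \int_{\mathbb T^{|X\setminus A|}} \chi_\theta(\alpha)\,d\nu(\theta)$ from Definition~\ref{def:FS} and push the difference quotient inside the integral. Fix $\alpha,\beta \in K(X,A)$ with $\alpha \neq \beta$. Then
\[
  F_\nu(\alpha) - F_\nu(\beta)
  \ =\ \int_{\mathbb T^{|X\setminus A|}} \bigl(\chi_\theta(\alpha) - \chi_\theta(\beta)\bigr)\,d\nu(\theta),
\]
which is legitimate because $K(X,A)$ is discrete (Lemma~\ref{prop:K-discrete-lc}), each $\chi_\theta$ is bounded by $1$, and $\nu$ is a finite measure, so the integrand is $|\nu|$-integrable. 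Applying the triangle inequality for integrals against the total variation measure gives
\[
  |F_\nu(\alpha) - F_\nu(\beta)|
  \ \le\ \int_{\mathbb T^{|X\setminus A|}} |\chi_\theta(\alpha) - \chi_\theta(\beta)|\,d|\nu|(\theta).
\]

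Next I would bound the integrand pointwise in $\theta$ using the definition of the Lipschitz seminorm: by Definition~\ref{def:Lip-seminorm}, for every $\theta$ we have $|\chi_\theta(\alpha) - \chi_\theta(\beta)| \le \mathrm{Lip}_\rho(\chi_\theta)\,\rho(\alpha,\beta)$. Substituting this into the integral and factoring out the constant $\rho(\alpha,\beta)$ (which does not depend on $\theta$) yields
\[
  |F_\nu(\alpha) - F_\nu(\beta)|
  \ \le\ \rho(\alpha,\beta)\int_{\mathbb T^{|X\setminus A|}} \mathrm{Lip}_\rho(\chi_\theta)\,d|\nu|(\theta).
\]
Dividing by $\rho(\alpha,\beta) > 0$ and taking the supremum over all $\alpha \neq \beta$ gives exactly $\mathrm{Lip}_\rho(F_\nu) \le \int \mathrm{Lip}_\rho(\chi_\theta)\,d|\nu|(\theta)$, as claimed. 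One small point to note is that the bound is vacuous (but still true) if the integral on the right is $+\infty$; since $X\setminus A$ is finite, Corollary~\ref{cor:edgewise-char} shows each $\mathrm{Lip}_\rho(\chi_\theta)$ is finite, and $\theta \mapsto \mathrm{Lip}_\rho(\chi_\theta)$ is measurable (indeed continuous, being a finite max of continuous ratios of the edgewise phase distances), so the integral is a well-defined element of $[0,\infty)$.

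I do not anticipate a genuine obstacle here: the argument is the standard "differentiate/difference under the integral sign plus triangle inequality" template for Fourier--Stieltjes transforms. The only thing requiring a line of care is the interchange of the difference with the integral, which is immediate from dominated convergence (or simply linearity of the integral, since there are just two terms) given that $\nu$ is finite and the characters are uniformly bounded; the discreteness of $K(X,A)$ also means no continuity-in-$\alpha$ subtleties arise. Everything else is a direct application of Definition~\ref{def:Lip-seminorm} and the definition of the total variation measure $|\nu|$.
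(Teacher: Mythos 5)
Your proposal is correct and follows essentially the same argument as the paper: write the difference $F_\nu(\alpha)-F_\nu(\beta)$ as an integral of $\chi_\theta(\alpha)-\chi_\theta(\beta)$, bound it by the total variation measure, apply the characterwise Lipschitz bound $|\chi_\theta(\alpha)-\chi_\theta(\beta)|\le \mathrm{Lip}_\rho(\chi_\theta)\,\rho(\alpha,\beta)$, and take the supremum. The paper's only cosmetic difference is a detour through $\gamma=\alpha-\beta$ and translation invariance, which your direct version avoids, and your added remarks on measurability and finiteness of $\theta\mapsto\mathrm{Lip}_\rho(\chi_\theta)$ are harmless extras.
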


\begin{proof}
Let $\alpha,\beta\in K(X,A)$ and set $\gamma:=\alpha-\beta$. Since
$\chi_\theta$ is a character,
\[
F_\nu(\alpha)-F_\nu(\beta)
=
\int_{\mathbb T^{|X\setminus A|}}
\chi_\theta(\beta)\bigl(\chi_\theta(\gamma)-1\bigr)\,d\nu(\theta).
\]
Using $|\chi_\theta(\beta)|=1$ and the total-variation inequality,
\[
\bigl|F_\nu(\alpha)-F_\nu(\beta)\bigr|
\le
\int_{\mathbb T^{|X\setminus A|}}
\bigl|\chi_\theta(\gamma)-1\bigr|\,d|\nu|(\theta).
\]
For points on $S^1$, the chordal distance is bounded above by the geodesic distance, so
\[
\bigl|\chi_\theta(\gamma)-1\bigr|
\le
\operatorname{dist}\bigl(\chi_\theta(\gamma),1\bigr).
\]
By Lemma~\ref{lem:char-lip},
\[
\operatorname{dist}\bigl(\chi_\theta(\gamma),1\bigr)
\le
\mathrm{Lip}(\chi_\theta)\,\rho(\gamma,0).
\]
Since $\rho$ is translation-invariant, \( \rho(\gamma,0)=\rho(\alpha,\beta). \) Therefore,
\[
\bigl|F_\nu(\alpha)-F_\nu(\beta)\bigr|
\le
\rho(\alpha,\beta)
\int_{\mathbb T^{|X\setminus A|}}
\mathrm{Lip}(\chi_\theta)\,d|\nu|(\theta).
\]
Dividing by $\rho(\alpha,\beta)$ and taking the supremum over
$\alpha\ne\beta$ proves the claim.
\end{proof}

Specializing to the heat measures $\nu_t$ gives the desired Lipschitz
control for the heat-weighted transforms.

\begin{figure}[ht]
    \centering

    \begin{minipage}{0.30\linewidth}
        \centering
        \includegraphics[width=\linewidth]{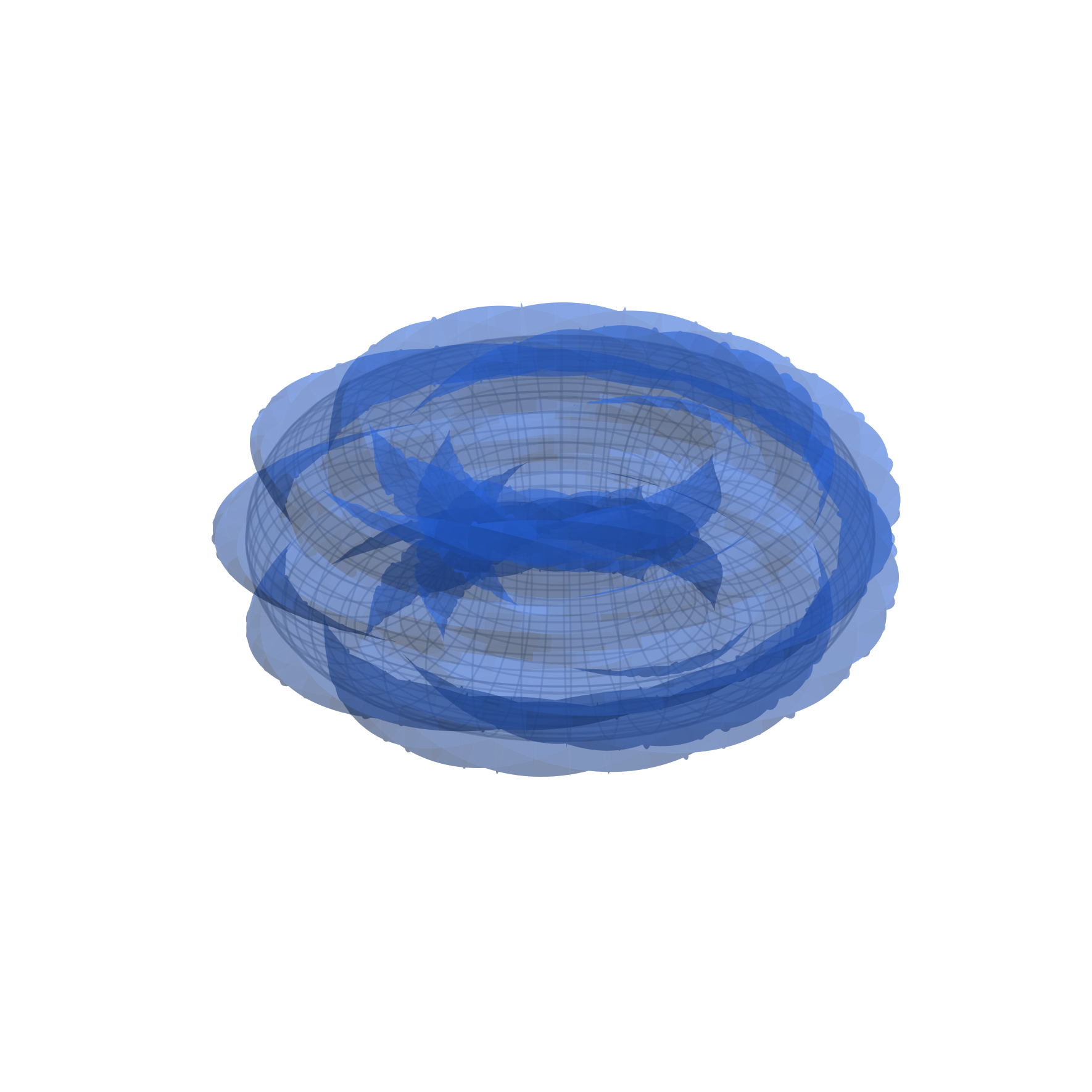}
    \end{minipage}\hfill
    \begin{minipage}{0.30\linewidth}
        \centering
        \includegraphics[width=\linewidth]{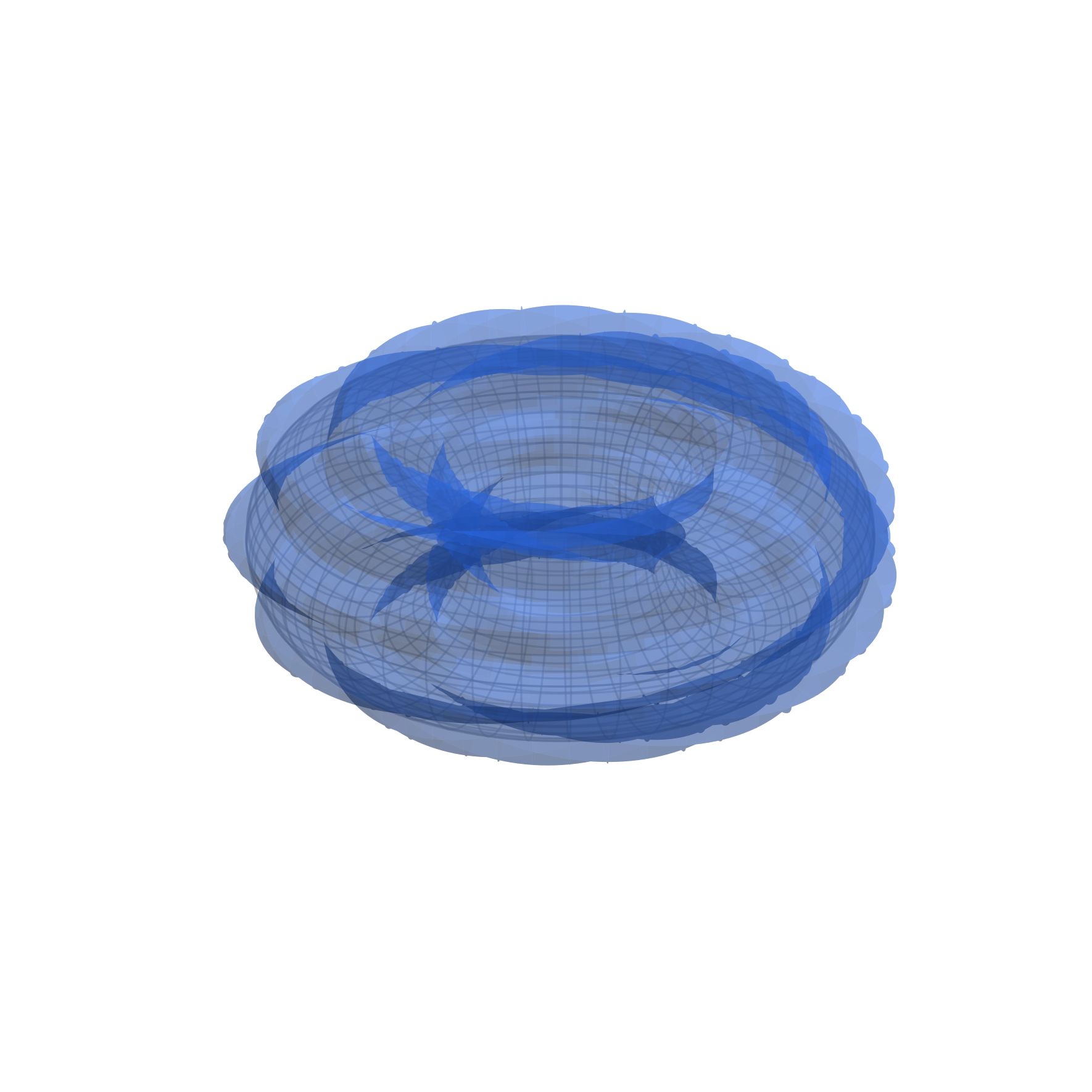}
    \end{minipage}\hfill
    \begin{minipage}{0.30\linewidth}
        \centering
        \includegraphics[width=\linewidth]{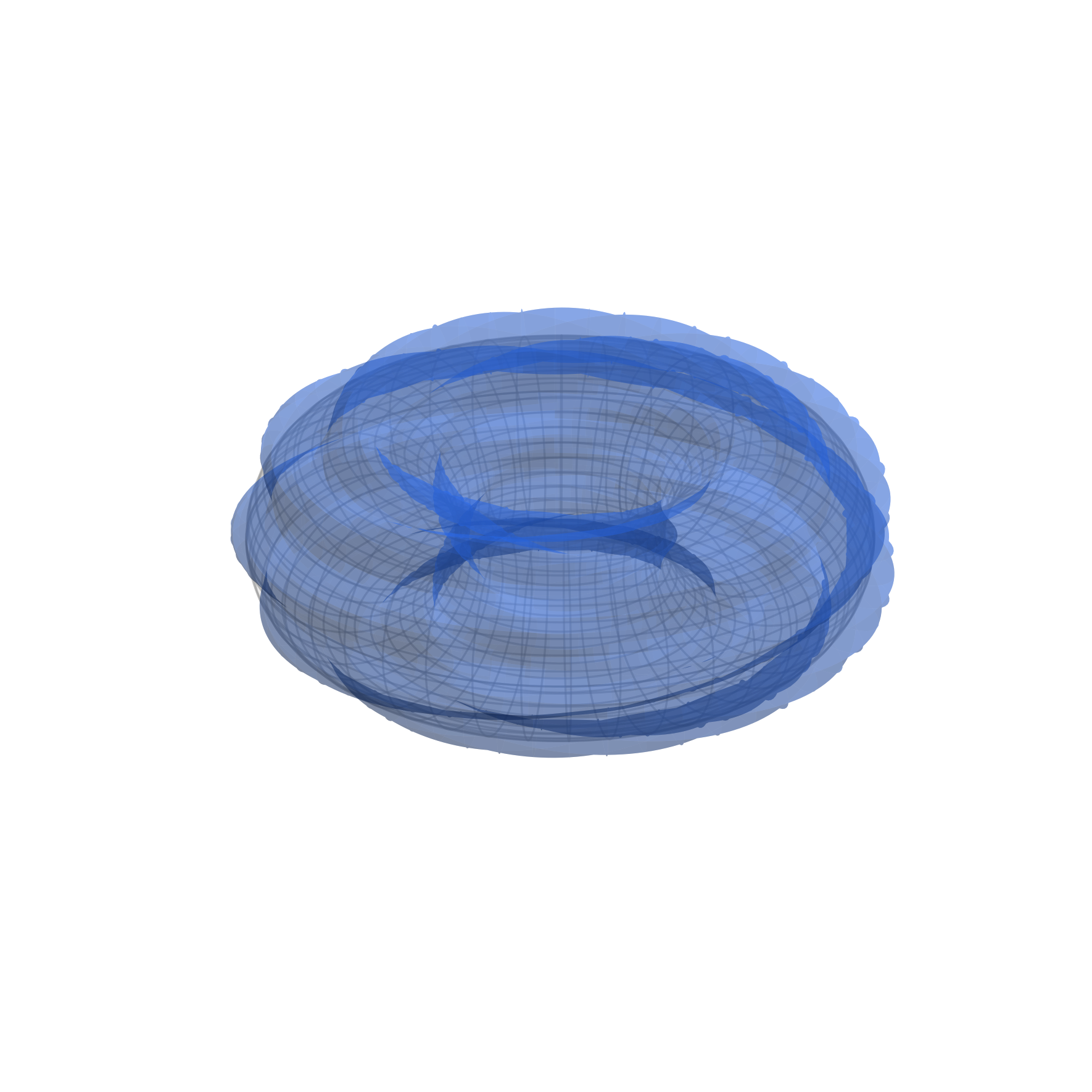}
    \end{minipage}

    \vspace{0.8em}

    \begin{minipage}{0.30\linewidth}
        \centering
        \includegraphics[width=\linewidth]{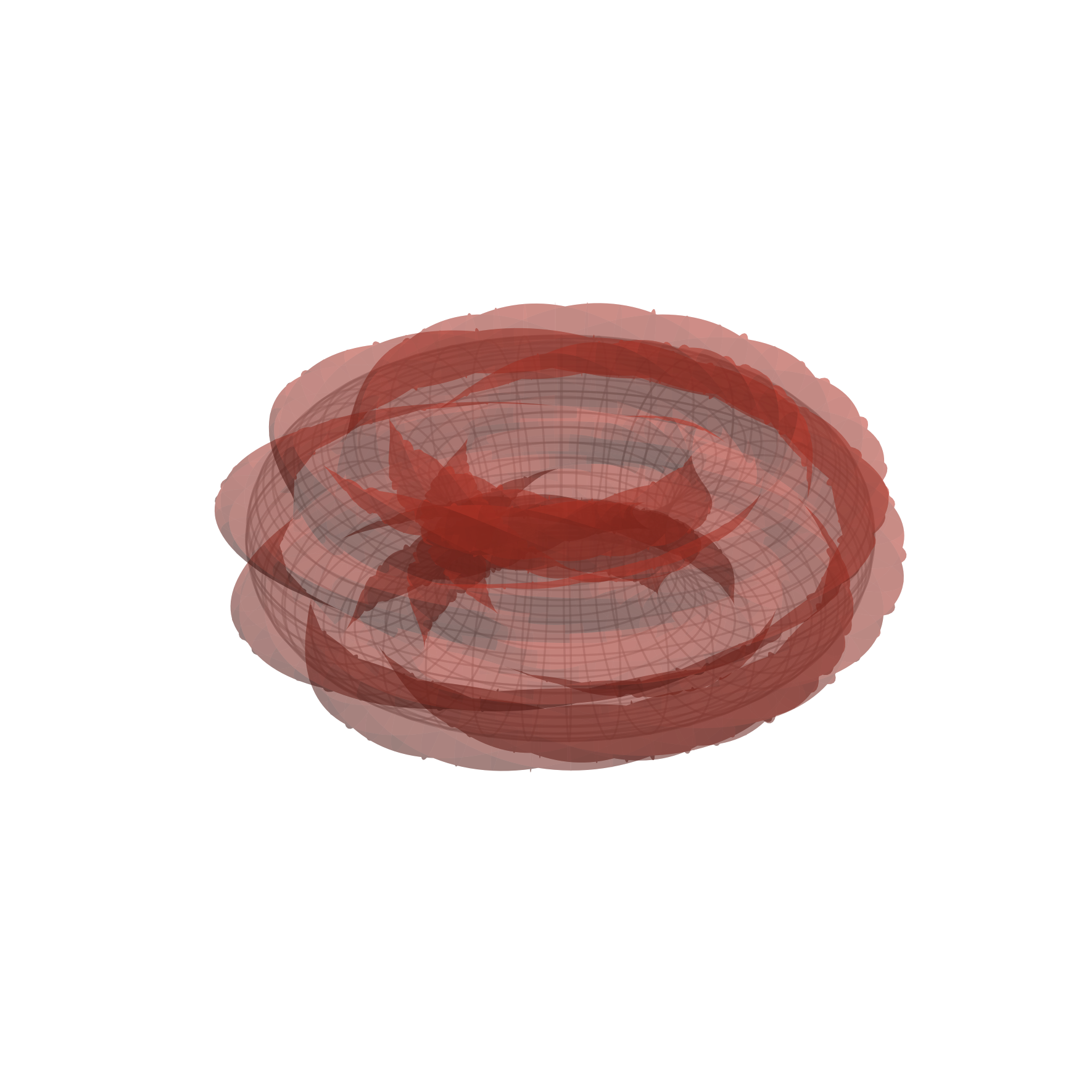}
    \end{minipage}\hfill
    \begin{minipage}{0.30\linewidth}
        \centering
        \includegraphics[width=\linewidth]{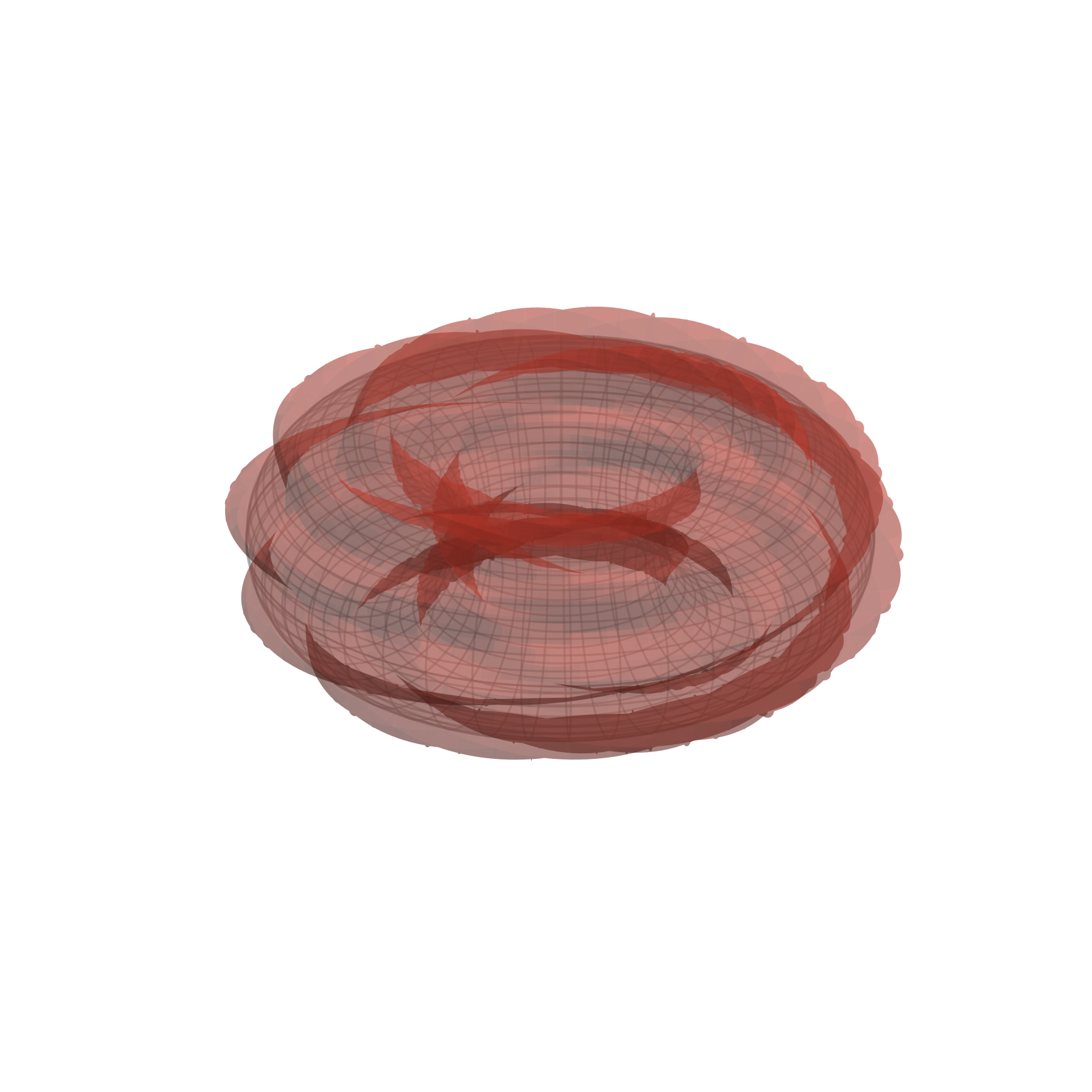}
    \end{minipage}\hfill
    \begin{minipage}{0.30\linewidth}
        \centering
        \includegraphics[width=\linewidth]{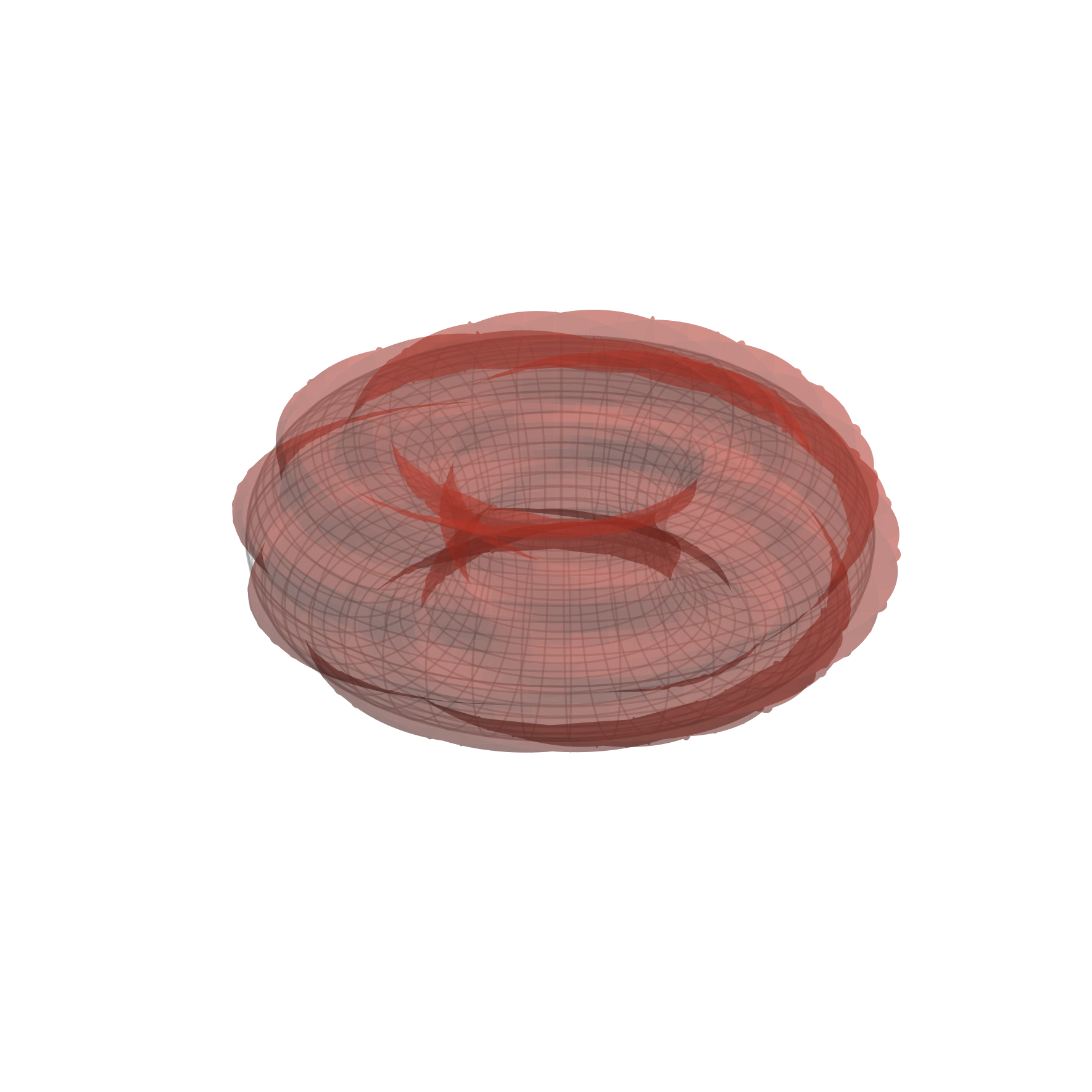}
    \end{minipage}

    \caption{
    Heat flow of the Fourier transform from Fig.~\ref{fig:fourier-vpd} on the Pontryagin dual torus from Fig.~\ref{fig:torus-embedding}, originating from the virtual persistence diagram in Fig.~\ref{fig:toy-example}. Columns correspond to times $t=0.0,\ 0.0625,$ and $0.125$. The two rows show the real and imaginary parts as surfaces.
    }
    \label{fig:heat-kernel-surfaces}
\end{figure}

\begin{figure}[ht]
    \centering

    \begin{minipage}{0.30\linewidth}
        \centering
        \includegraphics[width=\linewidth]{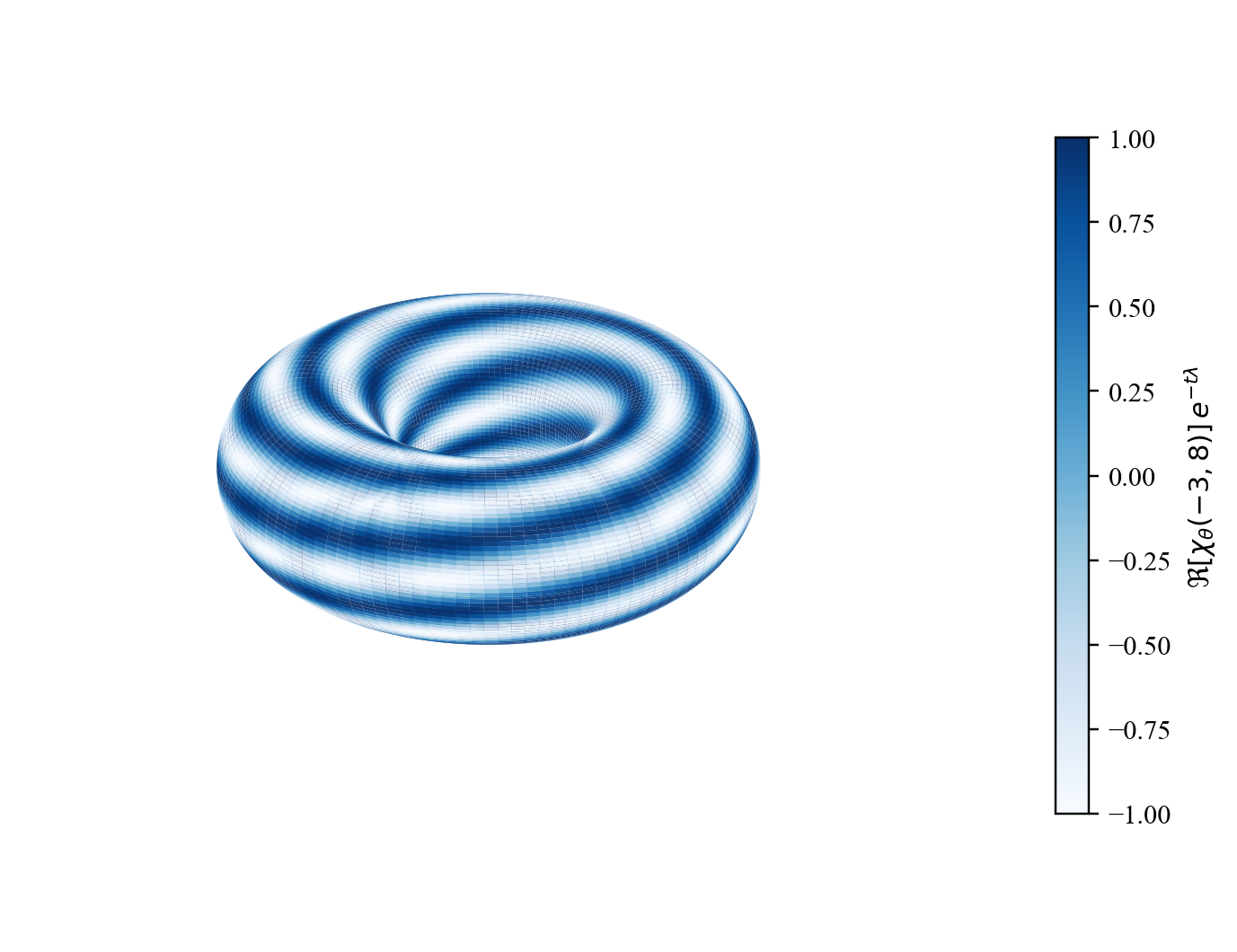}
    \end{minipage}\hfill
    \begin{minipage}{0.30\linewidth}
        \centering
        \includegraphics[width=\linewidth]{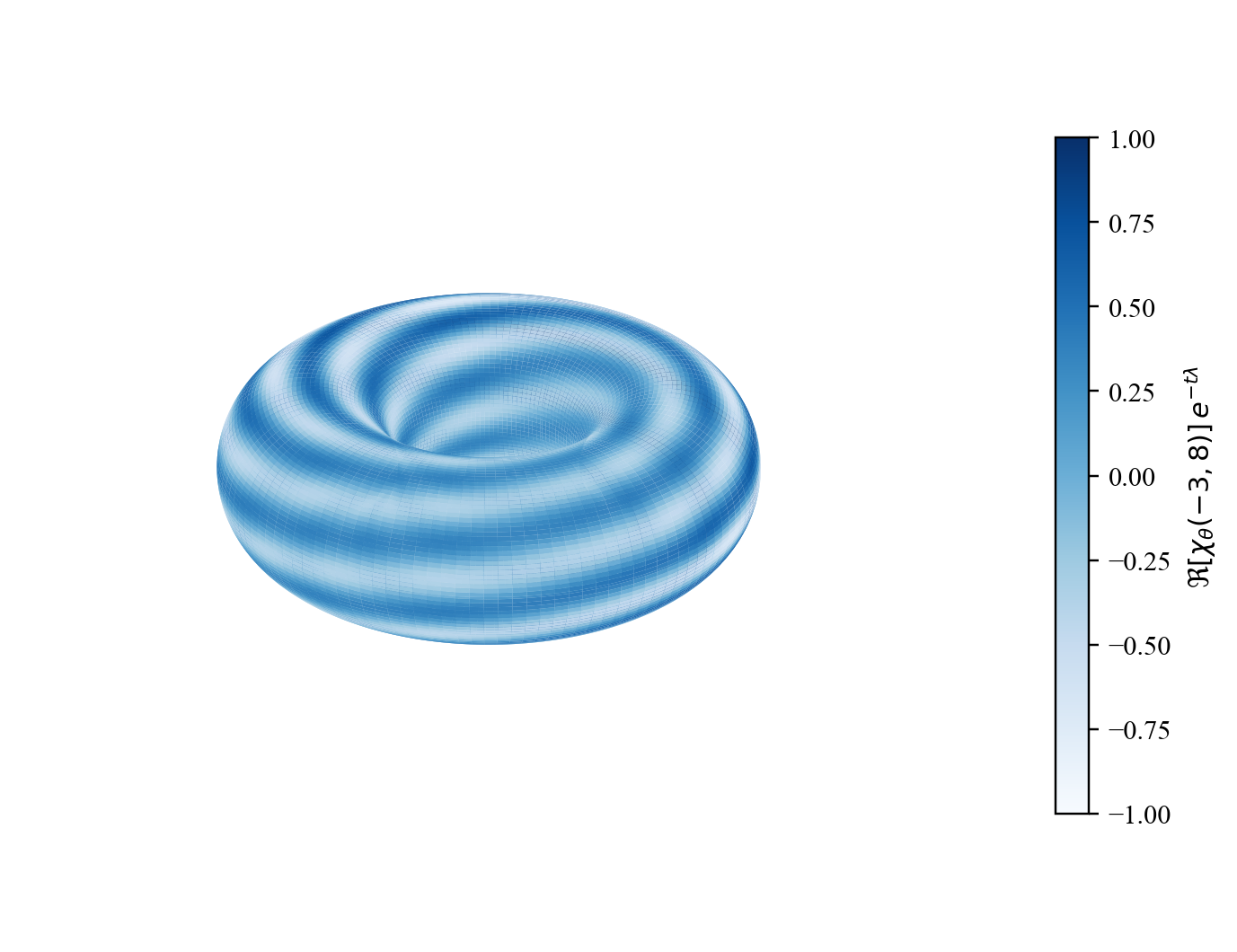}
    \end{minipage}\hfill
    \begin{minipage}{0.30\linewidth}
        \centering
        \includegraphics[width=\linewidth]{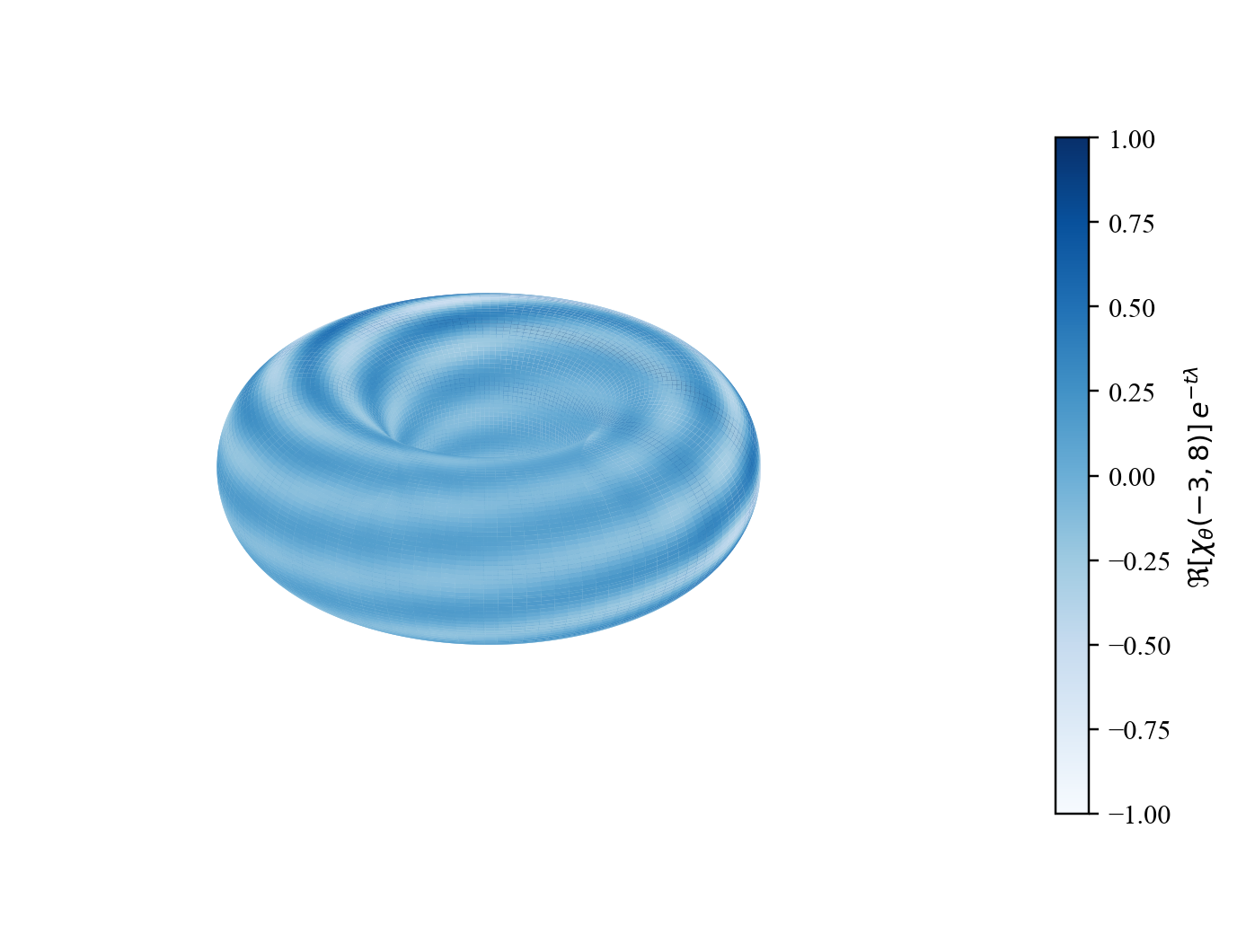}
    \end{minipage}

    \vspace{0.8em}

    \begin{minipage}{0.30\linewidth}
        \centering
        \includegraphics[width=\linewidth]{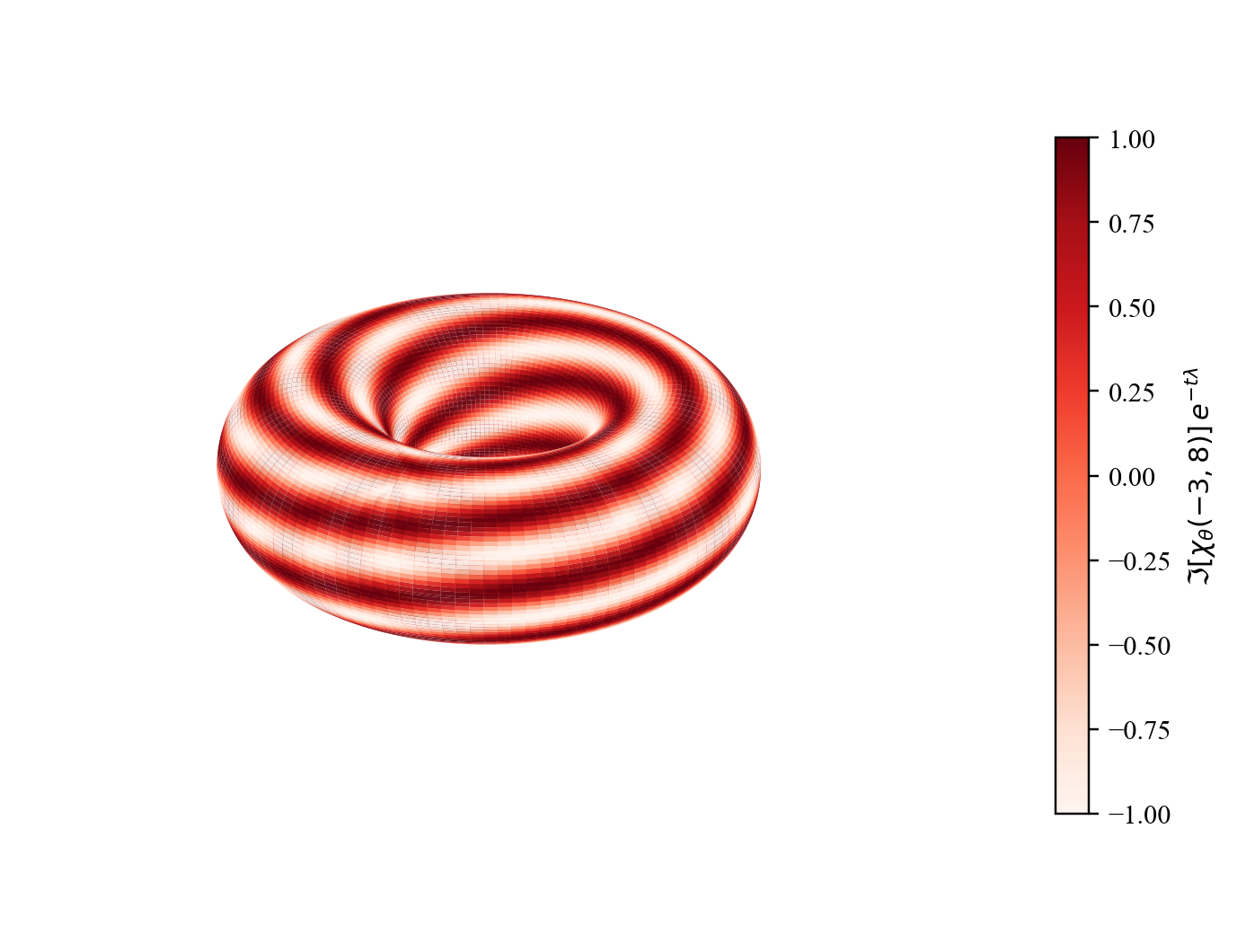}
    \end{minipage}\hfill
    \begin{minipage}{0.30\linewidth}
        \centering
        \includegraphics[width=\linewidth]{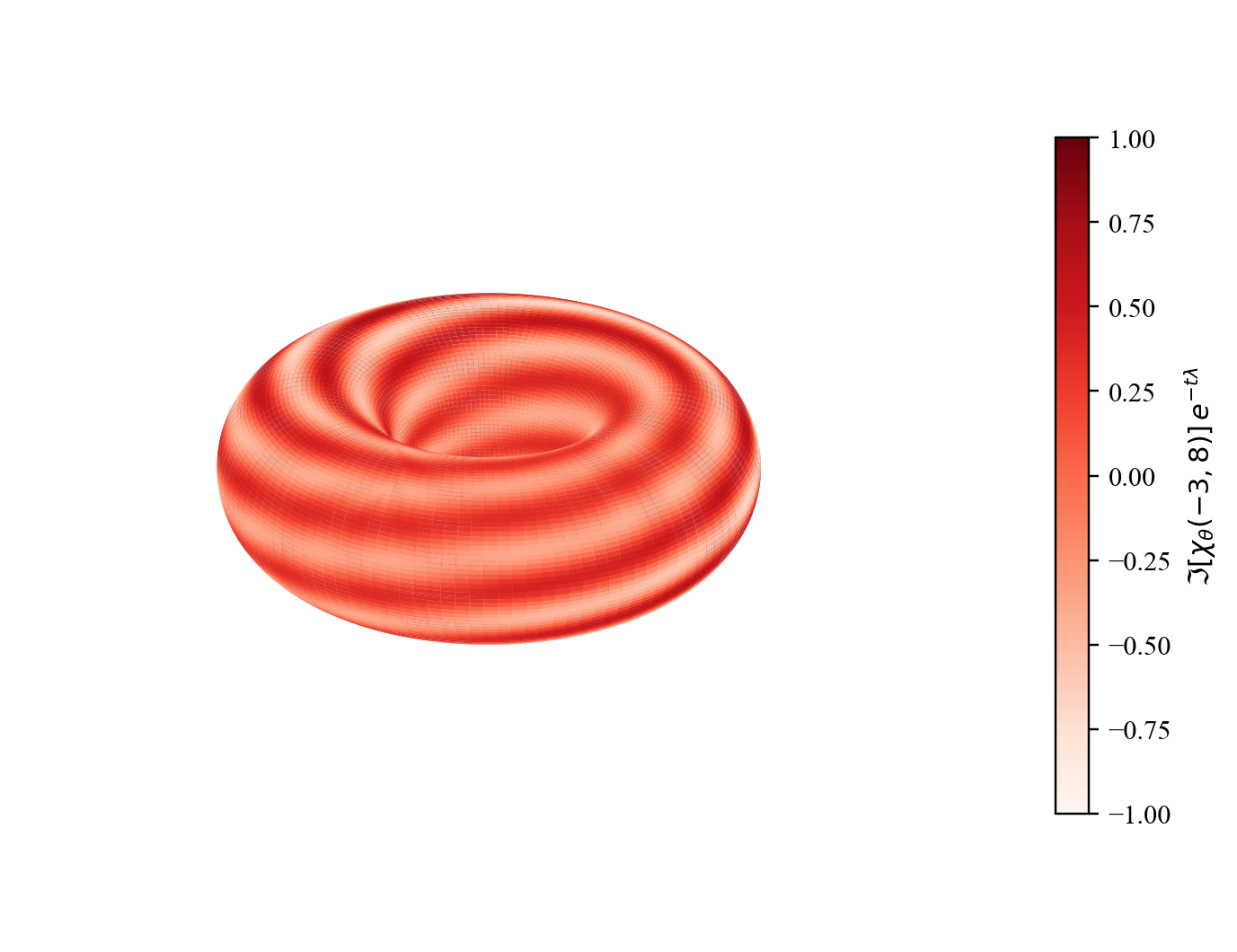}
    \end{minipage}\hfill
    \begin{minipage}{0.30\linewidth}
        \centering
        \includegraphics[width=\linewidth]{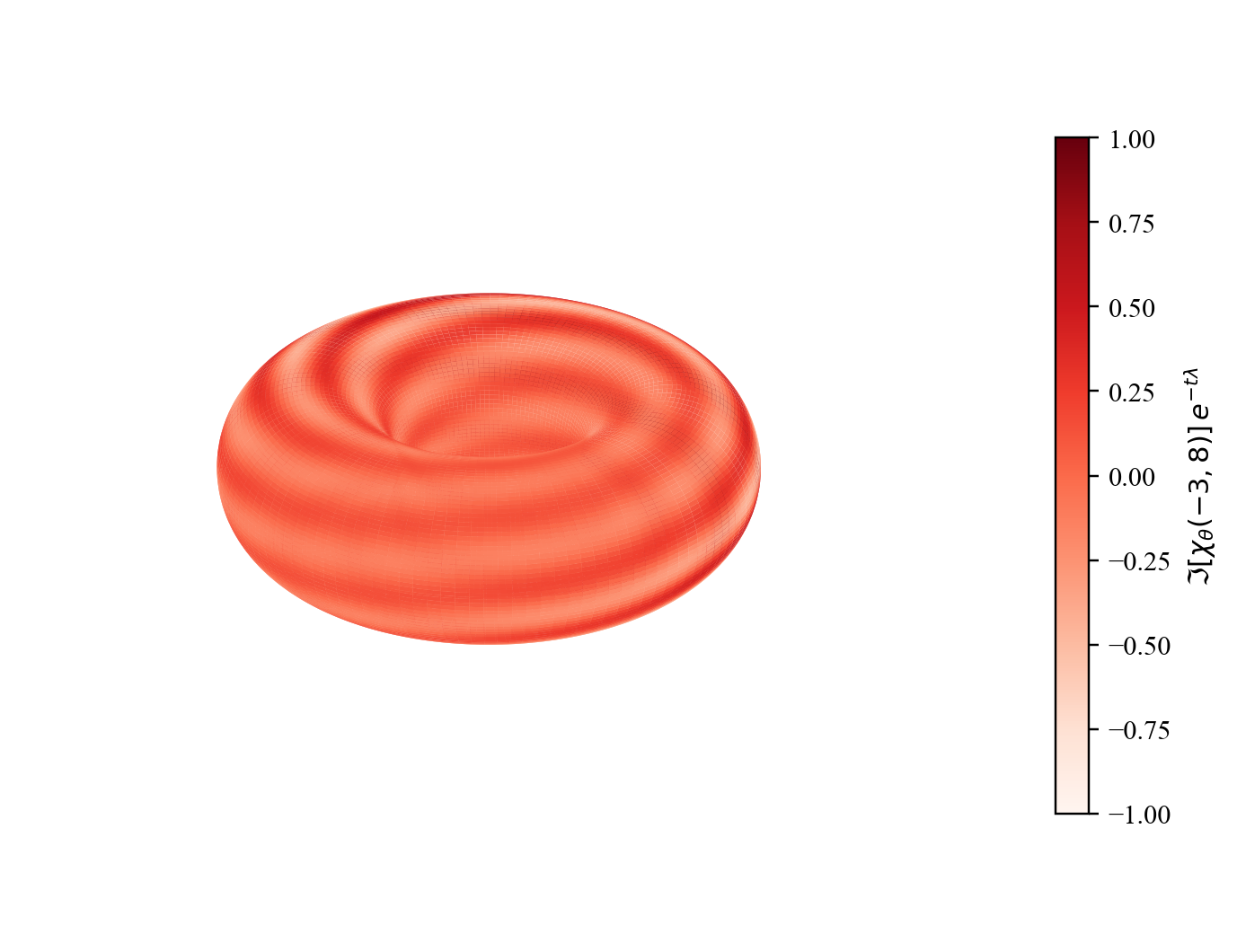}
    \end{minipage}

    \caption{
    Heat flow of the Fourier transform from Fig.~\ref{fig:fourier-vpd} on the Pontryagin dual torus from Fig.~\ref{fig:torus-embedding}, originating from the virtual persistence diagram in Fig.~\ref{fig:toy-example}. Columns correspond to times $t=0.0,\ 0.125,$ and $0.25$.  The two rows show the corresponding real and imaginary parts rendered as heatmaps.
}
    \label{fig:heat-kernel-heatmaps}
\end{figure}

\begin{proposition}\label{prop:lip-heat-transform}
For $t>0$ let $\nu_t$ be the heat measure from
Definition~\ref{def:heat-measure}.
Then
\[
  \mathrm{Lip}\bigl(F_{\nu_t}\bigr)
  \ \le\
  \int_{\mathbb T^{|X\setminus A|}}\!\mathrm{Lip}(\chi_\theta)\,
                        e^{-t\lambda(\theta)}\,d\mu(\theta),
\]
and the right-hand side is nonincreasing in $t$.
\end{proposition}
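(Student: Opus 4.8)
The plan is to obtain the bound as an immediate specialization of Lemma~\ref{lem:multiplier-stability} and to read off the monotonicity from the sign of $\lambda$. First I would note that $\nu_t$ is not merely a finite complex measure but a \emph{positive} one: by Proposition~\ref{prop:symbol-lambda}, $\lambda(\theta)\ge 0$ is finite on the compact torus (indeed a finite edgewise sum bounded by $2\sum_{\{u,v\}\in E}w_{uv}$), so its density $e^{-t\lambda(\theta)}$ with respect to the Haar probability measure $\mu$ is strictly positive and bounded. Hence $|\nu_t| = \nu_t$ and $d|\nu_t|(\theta) = e^{-t\lambda(\theta)}\,d\mu(\theta)$. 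Applying Lemma~\ref{lem:multiplier-stability} with $\nu = \nu_t$ then yields at once
\[
  \mathrm{Lip}_\rho\bigl(F_{\nu_t}\bigr)
  \ \le\
  \int_{\mathbb T^{|X\setminus A|}}\!\mathrm{Lip}_\rho(\chi_\theta)\,d|\nu_t|(\theta)
  \ =\
  \int_{\mathbb T^{|X\setminus A|}}\!\mathrm{Lip}_\rho(\chi_\theta)\,e^{-t\lambda(\theta)}\,d\mu(\theta),
\]
which is the asserted inequality.

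Next I would check that this right-hand side is finite, so that the monotonicity claim is meaningful. By Lemma~\ref{thm:char-lip-comparison} we have $\mathrm{Lip}_\rho(\chi_\theta)\le\mathrm{Lip}_{\overline d_1}(\phi_\theta)$, and by Corollary~\ref{cor:edgewise-char} the latter is the maximum of finitely many ratios $\operatorname{dist}(\phi_\theta(u),\phi_\theta(v))/\overline d_1(u,v)$, each a continuous function of $\theta$; hence $\theta\mapsto\mathrm{Lip}_\rho(\chi_\theta)$ is bounded on the compact torus $\mathbb T^{|X\setminus A|}$, and since $\mu$ is a probability measure the integrand is integrable for every $t>0$.

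For the monotonicity, fix $0<s\le t$. Because $\lambda(\theta)\ge 0$, the map $r\mapsto e^{-r\lambda(\theta)}$ is nonincreasing for each fixed $\theta$, so $e^{-t\lambda(\theta)}\le e^{-s\lambda(\theta)}$ pointwise on $\mathbb T^{|X\setminus A|}$. Multiplying by the nonnegative factor $\mathrm{Lip}_\rho(\chi_\theta)$ and integrating against the positive measure $\mu$ preserves this inequality, giving
\[
  \int_{\mathbb T^{|X\setminus A|}}\!\mathrm{Lip}_\rho(\chi_\theta)\,e^{-t\lambda(\theta)}\,d\mu(\theta)
  \ \le\
  \int_{\mathbb T^{|X\setminus A|}}\!\mathrm{Lip}_\rho(\chi_\theta)\,e^{-s\lambda(\theta)}\,d\mu(\theta),
\]
so the right-hand side of the proposition is nonincreasing in $t$. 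I do not expect a genuine obstacle here: the proposition is essentially a corollary of Lemma~\ref{lem:multiplier-stability}, and the only points that require a line of justification are that $\nu_t$ coincides with its own total-variation measure (so the stated density can be plugged into Lemma~\ref{lem:multiplier-stability}) and that $\mathrm{Lip}_\rho(\chi_\theta)$ is bounded on the torus (so the controlling integral is finite) — both of which follow from $\lambda\ge 0$ together with the edgewise description of the character seminorm established in the previous section.
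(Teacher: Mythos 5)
Your proposal is correct and follows essentially the same route as the paper: specialize Lemma~\ref{lem:multiplier-stability} to $\nu=\nu_t$, use the nonnegativity of the density $e^{-t\lambda(\theta)}$ to identify $|\nu_t|=\nu_t$, and derive monotonicity from the pointwise decrease of $e^{-t\lambda(\theta)}$ in $t$ together with $\lambda\ge 0$. Your additional check that $\theta\mapsto\mathrm{Lip}_\rho(\chi_\theta)$ is bounded on the compact torus is a harmless (and welcome) supplement the paper defers to its later uniform-bound remarks.
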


\begin{proof}
Applying Lemma~\ref{lem:multiplier-stability} with
$d\nu_t(\theta)=e^{-t\lambda(\theta)}\,d\mu(\theta)$ gives
\[
  \mathrm{Lip}(F_{\nu_t})
  \ \le\
  \int_{\mathbb T^{|X\setminus A|}}\!\mathrm{Lip}(\chi_\theta)\,d|\nu_t|(\theta).
\]
Since $e^{-t\lambda(\theta)}\ge 0$ for all $\theta$, we have
$d|\nu_t|=d\nu_t$, which yields the displayed inequality.

For monotonicity, let $0<t_1<t_2$.
For every $\theta\in\mathbb T^{|X\setminus A|}$ we have $\lambda(\theta)\ge 0$ and hence
$e^{-t_2\lambda(\theta)}\le e^{-t_1\lambda(\theta)}$.
Multiplying by the nonnegative factor $\mathrm{Lip}(\chi_\theta)$ and
integrating against $\mu$ shows that
\[
  \int_{\mathbb T^{|X\setminus A|}}\!\mathrm{Lip}(\chi_\theta)\,
                       e^{-t_2\lambda(\theta)}\,d\mu(\theta)
  \ \le\
  \int_{\mathbb T^{|X\setminus A|}}\!\mathrm{Lip}(\chi_\theta)\,
                       e^{-t_1\lambda(\theta)}\,d\mu(\theta),
\]
so the bound on $\mathrm{Lip}(F_{\nu_t})$ is nonincreasing in~$t$.
\end{proof}

By Lemma~\ref{lem:lambda-vs-L},
\[
\lambda(\theta)
\ge
\frac{2w_{\min}d_{\min}^2}{\pi^2}\,
\mathrm{Lip}(\chi_\theta)^2.
\]
Hence
\[
\mathrm{Lip}(\chi_\theta)
\le
\Bigl(\frac{\pi^2}{2w_{\min}d_{\min}^2}\Bigr)^{1/2}
\lambda(\theta)^{1/2}.
\]
Combining this estimate with Proposition~\ref{prop:lip-heat-transform}
gives the Dirichlet energy-based bound
\begin{equation}\label{eq:energy-lip-heat-transform}
\mathrm{Lip}\bigl(F_{\nu_t}\bigr)
\le
\Bigl(\frac{\pi^2}{2w_{\min}d_{\min}^2}\Bigr)^{1/2}
\int_{\mathbb T^{|X\setminus A|}}
\lambda(\theta)^{1/2}
e^{-t\lambda(\theta)}\,d\mu(\theta).
\end{equation}
This expresses the Lipschitz control of the heat-weighted
Fourier--Stieltjes transform directly in terms of the Dirichlet energy:
as $t$ increases, the heat factor increasingly damps modes (Definition~\ref{def:characters-dual}) with large Dirichlet
energy.

\section{Reproducing kernel Hilbert spaces}\label{subsec:rkhs-layer}

We now lift the Lipschitz estimates from individual characters and
Fourier--Stieltjes transforms to arbitrary functions in the associated
reproducing kernel Hilbert spaces. For any finite positive Borel measure
\(\nu\) on \(\mathbb T^{|X\setminus A|}\), Bochner's theorem
(Theorem~\ref{thm:bochner}) determines the translation-invariant kernel
\begin{equation}\label{eq:rkhs-bochner-kernel}
k_\nu(\alpha,\beta)
=
\int_{\mathbb T^{|X\setminus A|}}
\chi_\theta(\alpha-\beta)\,d\nu(\theta),
\end{equation}
and Theorem~\ref{thm:rkhs} associates to \(k_\nu\) the corresponding
RKHS \(\mathcal H_\nu\) of functions on \(K(X,A)\).

In this section we show that the heat measures $\nu_t$ from
Equation~\eqref{eq:heat-measure} produce RKHSs whose functions are
globally Lipschitz (Definition~\ref{def:Lip-seminorm}) with respect to the VPD metric. The resulting bounds
are controlled by the same spectral quantities governing the individual
characters, namely their VPD Lipschitz seminorms and Dirichlet energies (Equation~\eqref{eq:lambda-dirichlet}).
Consequently, the resulting RKHS functions are built primarily from
Fourier modes with small Dirichlet energy. This provides a class of
stable topological features on virtual persistence diagrams and prepares
the ground for the finite-dimensional random Fourier feature
constructions in Equation~\eqref{eq:heat-rff-map}.

\subsection{Heat-weighted RKHS on virtual persistence diagrams}\label{subsec:heat-rkhs}

For a finite positive Borel measure $\nu$ on $\mathbb T^{|X\setminus A|}$, Bochner's theorem (Theorem~\ref{thm:bochner})
identifies the corresponding RKHS $\mathcal H_\nu$ on $G$ as the closure of
the span of the feature maps
\begin{equation}\label{eq:rkhs-feature-map}
  \Phi_\nu(\alpha)(\theta)\ :=\ \chi_\theta(\alpha),
  \qquad \alpha\in G,\ \theta\in\mathbb T^{|X\setminus A|}
\end{equation}
inside $L^2(\nu)$, with kernel
\begin{equation}\label{eq:rkhs-kernel-realization}
  k_\nu(\alpha,\beta)
  =
  \int_{\mathbb T^{|X\setminus A|}}\chi_\theta(\alpha-\beta)\,d\nu(\theta)
  =
  \langle \Phi_\nu(\alpha),\Phi_\nu(\beta)\rangle_{L^2(\nu)}.
\end{equation}
In this realization, $\mathcal H_\nu$ is generated by the same
characters $\chi_\theta$ whose phase variation determines their VPD
Lipschitz behavior, and the kernel encodes their spectral
superposition with weight~$\nu$.

The next lemma lifts the pointwise Lipschitz bounds for characters to
arbitrary functions in $\mathcal H_\nu$, with constants expressed directly in
terms of the Lipschitz seminorms of $\chi_\theta$.

\begin{lemma}\label{lem:rkhs-lip}
Let $\nu$ be a finite positive Borel measure on
$\mathbb T^{|X\setminus A|}$, and let $\mathcal H_\nu$ be the
associated RKHS on $G$. Then every $f\in\mathcal H_\nu$ satisfies
\begin{equation}\label{eq:rkhs-lip-bound}
  \mathrm{Lip}(f)
  \le
  \|f\|_{\mathcal H_\nu}
  \Bigg(
    \int_{\mathbb T^{|X\setminus A|}}
    \mathrm{Lip}(\chi_\theta)^2\,d\nu(\theta)
  \Bigg)^{1/2}.
\end{equation}
\end{lemma}

\begin{proof}
By the reproducing property,
\[
f(\alpha)-f(\beta)
=
\langle
f,\,
k_\nu(\cdot,\alpha)-k_\nu(\cdot,\beta)
\rangle_{\mathcal H_\nu},
\qquad
\alpha,\beta\in G,
\]
so the Cauchy--Schwarz inequality gives
\[
|f(\alpha)-f(\beta)|
\le
\|f\|_{\mathcal H_\nu}\,
\|k_\nu(\cdot,\alpha)-k_\nu(\cdot,\beta)\|_{\mathcal H_\nu}.
\]

Expanding the squared norm and using Bochner's theorem representation (Theorem~\ref{thm:bochner}) of
$k_\nu$ (Equation~\eqref{eq:rkhs-kernel-realization}) gives
\begin{align}
\|k_\nu(\cdot,\alpha)-k_\nu(\cdot,\beta)\|_{\mathcal H_\nu}^2
&=
k_\nu(\alpha,\alpha)
+k_\nu(\beta,\beta)
-2\Re\,k_\nu(\alpha,\beta) \nonumber\\
&=
\int_{\mathbb T^{|X\setminus A|}}
\bigl|\chi_\theta(\alpha)-\chi_\theta(\beta)\bigr|^2
\,d\nu(\theta).
\label{eq:kernel-difference-norm}
\end{align}

For points on $S^1$, chordal distance is bounded above by geodesic
distance. Hence
\[
\bigl|\chi_\theta(\alpha)-\chi_\theta(\beta)\bigr|
\le
\operatorname{dist}\bigl(
\chi_\theta(\alpha),
\chi_\theta(\beta)
\bigr).
\]
By the definition of $\mathrm{Lip}(\chi_\theta)$,
\[
\operatorname{dist}\bigl(
\chi_\theta(\alpha),
\chi_\theta(\beta)
\bigr)
\le
\mathrm{Lip}(\chi_\theta)\,\rho(\alpha,\beta).
\]
Therefore,
\[
\|k_\nu(\cdot,\alpha)-k_\nu(\cdot,\beta)\|_{\mathcal H_\nu}^2
\le
\rho(\alpha,\beta)^2
\int_{\mathbb T^{|X\setminus A|}}
\mathrm{Lip}(\chi_\theta)^2
\,d\nu(\theta),
\]
and hence
\[
\|k_\nu(\cdot,\alpha)-k_\nu(\cdot,\beta)\|_{\mathcal H_\nu}
\le
\rho(\alpha,\beta)
\Bigg(
\int_{\mathbb T^{|X\setminus A|}}
\mathrm{Lip}(\chi_\theta)^2
\,d\nu(\theta)
\Bigg)^{1/2}.
\]

Substituting this estimate into the Cauchy--Schwarz bound gives
\[
|f(\alpha)-f(\beta)|
\le
\rho(\alpha,\beta)\,
\|f\|_{\mathcal H_\nu}
\Bigg(
\int_{\mathbb T^{|X\setminus A|}}
\mathrm{Lip}(\chi_\theta)^2
\,d\nu(\theta)
\Bigg)^{1/2}.
\]
Taking the supremum over $\alpha\ne\beta$ proves the claim.
\end{proof}

Thus every function in $\mathcal H_\nu$ inherits Wasserstein stability from the character family $\{\chi_\theta\}$: if the characters used in the Fourier representation have stable phase behaviour on persistence intervals, then every function with bounded $\mathcal H_\nu$-norm is automatically Lipschitz, with Lipschitz constant controlled by the $\nu$-weighted average of the characterwise quantities $\mathrm{Lip}(\chi_\theta)^2$.

We now specialize to the heat measures on the dual introduced in
Definition~\ref{def:heat-measure}. For each $t>0$ the measure
\[
d\nu_t(\theta)
:=
e^{-t\lambda(\theta)}\,d\mu(\theta)
\]
weights each character by the heat factor determined by its Dirichlet
energy. Since $\lambda(\theta)$ is comparable to
$\mathrm{Lip}(\chi_\theta)^2$, increasing $t$ suppresses modes with
unstable phase variation, equivalently those with large VPD Lipschitz
seminorm. Let $\mathcal H_t:=\mathcal H_{\nu_t}$ be the corresponding
RKHS on $G$.

\begin{theorem}\label{thm:heat-lip}
For every $t>0$ and $f\in\mathcal H_t$,
\begin{equation}\label{eq:heat-rkhs-lip-bound}
  \mathrm{Lip}(f)
  \le
  \|f\|_{\mathcal H_t}
  \Bigg(\int_{\mathbb T^{|X\setminus A|}}
    \mathrm{Lip}(\chi_\theta)^2
    e^{-t\lambda(\theta)}\,d\mu(\theta)\Bigg)^{1/2}
\end{equation}
and the integral prefactor
\begin{equation}\label{eq:heat-lip-prefactor}
  t\longmapsto
  \int_{\mathbb T^{|X\setminus A|}}
    \mathrm{Lip}(\chi_\theta)^2
    e^{-t\lambda(\theta)}\,d\mu(\theta)
\end{equation}
is finite and nonincreasing on $(0,\infty)$.
\end{theorem}

\begin{proof}
Apply Lemma~\ref{lem:rkhs-lip} with $\nu=\nu_t$. This gives, for every
$f\in\mathcal H_t$,
\[
  \mathrm{Lip}(f)\ \le\ \|f\|_{\mathcal H_t}\,
  \Bigg(\int_{\mathbb T^{|X\setminus A|}}\mathrm{Lip}(\chi_\theta)^2\,
                        e^{-t\lambda(\theta)}\,d\mu(\theta)\Bigg)^{1/2},
\]
which is the first claim.

For finiteness and monotonicity of the prefactor, note that
$\mathrm{Lip}(\chi_\theta)^2\,e^{-t\lambda(\theta)}\ge 0$ and
$\mu$ is a probability measure, so the integral is finite for each $t>0$
as soon as $\theta\mapsto\mathrm{Lip}(\chi_\theta)^2$ is $\mu$-integrable,
which follows from the edgewise formula (Equation~\eqref{eq:edgewise-character-lipschitz}).
Moreover, $\lambda(\theta)\ge 0$ for all $\theta$, so for each fixed $\theta$
the map $t\mapsto e^{-t\lambda(\theta)}$ is nonincreasing on $(0,\infty)$.
Multiplying by the nonnegative factor $\mathrm{Lip}(\chi_\theta)^2$
preserves pointwise monotonicity, and integration against $\mu$ preserves
the order. Hence, the prefactor is nonincreasing in $t$.
\end{proof}

In words, the heat weight $e^{-t\lambda(\theta)}$
suppresses characters with unstable phase variation across nearby
persistence intervals. Since characters aggregate phase information over
entire virtual persistence diagrams, instability could in principle
accumulate under Fourier superposition. Theorem~\ref{thm:heat-lip}
shows that this does not occur in the heat-weighted RKHS: the Lipschitz seminorm of an RKHS function is controlled by a
heat-weighted average of the characterwise Lipschitz scales. As $t$
increases, contributions from high Dirichlet-energy characters are increasingly
damped, so the resulting functions concentrate on stable Fourier modes.

\subsection{Geometric corollaries}\label{subsec:heat-geom}

Theorem~\ref{thm:heat-lip} bounds the Lipschitz seminorm of
functions in $\mathcal H_t$ by a heat-weighted average of the seminorms
$\mathrm{Lip}(\chi_\theta)$. We now rewrite this estimate in
spectral and geometric forms using the character-phase identity in Lemma~\ref{lem:char-lip-comparison}
and the phase to Dirichlet-energy comparison in Lemma~\ref{lem:lambda-vs-L}.

\begin{corollary}[Spectral form]\label{cor:heat-lip-spectral}
For every $t>0$ and $f\in\mathcal H_t$,
\begin{equation}\label{eq:heat-lip-spectral}
  \mathrm{Lip}(f)
  \le
  \frac{\pi}{d_{\min}\sqrt{2\,w_{\min}}}\,
  \|f\|_{\mathcal H_t}
  \Bigg(\int_{\mathbb T^{|X \setminus A|}}
    \lambda(\theta)e^{-t\lambda(\theta)}\,d\mu(\theta)\Bigg)^{1/2}
\end{equation}
with $w_{\min},d_{\min}$ as in Lemma~\ref{lem:lambda-vs-L}.
\end{corollary}

\begin{proof}
By Lemma~\ref{lem:lambda-vs-L},
\[
  \lambda(\theta)\ \ge\ \frac{2w_{\min}d_{\min}^2}{\pi^2}\,
                        \mathrm{Lip}(\chi_\theta)^2,
\]
which is equivalent to
\[
  \mathrm{Lip}(\chi_\theta)^2
  \ \le\ \frac{\pi^2}{2w_{\min}d_{\min}^2}\,\lambda(\theta).
\]
Therefore,
\[
  \int_{\mathbb T^{|X\setminus A|}}
    \mathrm{Lip}(\chi_\theta)^2\,e^{-t\lambda(\theta)}\,d\mu(\theta)
  \ \le\
  \frac{\pi^2}{2w_{\min}d_{\min}^2}\,
  \int_{\mathbb T^{|X\setminus A|}}\lambda(\theta)\,e^{-t\lambda(\theta)}\,d\mu(\theta).
\]
Thus, for every $f\in\mathcal H_t$,
\[
\begin{aligned}
  \mathrm{Lip}(f)
  &\ \le\ \|f\|_{\mathcal H_t}\,
         \Bigg(\int_{\mathbb T^{|X\setminus A|}}
                 \mathrm{Lip}(\chi_\theta)^2\,
                 e^{-t\lambda(\theta)}\,d\mu(\theta)\Bigg)^{1/2}\\[4pt]
  &\ \le\ \|f\|_{\mathcal H_t}\,
         \Bigg(\frac{\pi^2}{2w_{\min}d_{\min}^2}\,
                 \int_{\mathbb T^{|X\setminus A|}}\lambda(\theta)\,
                                    e^{-t\lambda(\theta)}\,d\mu(\theta)
         \Bigg)^{1/2}\\[4pt]
  &\ =\ \frac{\pi}{d_{\min}\sqrt{2\,w_{\min}}}\,
         \|f\|_{\mathcal H_t}\,
         \Bigg(\int_{\mathbb T^{|X\setminus A|}}\lambda(\theta)\,
                                e^{-t\lambda(\theta)}\,d\mu(\theta)\Bigg)^{1/2},
\end{aligned}
\]
which is the claimed inequality.
\end{proof}

In this form, the Lipschitz constant is controlled by a
heat-weighted average of the Dirichlet energies
$\lambda(\theta)$. Since $\lambda(\theta)$ measures the phase
instability of the character $\chi_\theta$, the factor
$\lambda(\theta)e^{-t\lambda(\theta)}$ quantifies how much instability
remains after heat damping at scale $t$. Corollary~\ref{cor:heat-lip-spectral}
therefore shows that functions in the heat RKHS are $\rho$-stable when
their Fourier representations concentrate on low Dirichlet-energy, slowly varying
modes.

The next corollary bounds the heat factor using the comparison between
Dirichlet energy and phase Lipschitz scale from
Lemma~\ref{lem:lambda-vs-L}. The resulting estimate depends only on the
Lipschitz seminorms of the phase functions on the finite metric space
$(X/A,\overline d_1)$ and expresses the heat weight through Gaussian
decay in \(\mathrm{Lip}(\phi_\theta)^2\).

\begin{corollary}[Geometric form]\label{cor:heat-lip-geom}
For every $t>0$ and $f\in\mathcal H_t$,
\begin{equation}\label{eq:heat-lip-geometric}
  \mathrm{Lip}(f)
  \le
  \|f\|_{\mathcal H_t}
  \Bigg(\int_{\mathbb T^{|X \setminus A|}}
    \mathrm{Lip}(\phi_\theta)^2
    \exp\!\Big(-t\,\frac{2w_{\min}d_{\min}^2}{\pi^2}\,
                 \mathrm{Lip}(\phi_\theta)^2\Big)
    \,d\mu(\theta)
  \Bigg)^{1/2}
\end{equation}
where $\phi_\theta$ is the phase function from
Lemma~\ref{lem:char-lip-comparison}.
\end{corollary}

\begin{proof}
By Lemma~\ref{lem:char-lip-comparison},
\[
  \mathrm{Lip}(\chi_\theta)
  =
  \mathrm{Lip}(\phi_\theta).
\]
Combining this identity with Lemma~\ref{lem:lambda-vs-L} gives
\[
  \lambda(\theta)
  \ge
  \frac{2w_{\min}d_{\min}^2}{\pi^2}\,
  \mathrm{Lip}(\phi_\theta)^2.
\]
Hence
\[
  e^{-t\lambda(\theta)}
  \le
  \exp\!\Big(-t\,\frac{2w_{\min}d_{\min}^2}{\pi^2}\,
               \mathrm{Lip}(\phi_\theta)^2\Big).
\]
Substituting this estimate and the identity
\[
  \mathrm{Lip}(\chi_\theta)^2
  =
  \mathrm{Lip}(\phi_\theta)^2
\]
into the integral in Theorem~\ref{thm:heat-lip} gives
\begin{align*}
  &\int_{\mathbb T^{|X\setminus A|}}
    \mathrm{Lip}(\chi_\theta)^2\,
    e^{-t\lambda(\theta)}\,d\mu(\theta) \\
  &\le
  \int_{\mathbb T^{|X\setminus A|}}
    \mathrm{Lip}(\phi_\theta)^2
    \exp\!\Big(-t\,\frac{2w_{\min}d_{\min}^2}{\pi^2}\,
                 \mathrm{Lip}(\phi_\theta)^2\Big)
    \,d\mu(\theta).
\end{align*}
Combining this bound with Theorem~\ref{thm:heat-lip} proves the claim.
\end{proof}

This form isolates the dependence on the stability of phases of the quotient metric space $(X/A,\overline d_1)$: the integrand depends only
on the Lipschitz seminorms
$\mathrm{Lip}(\phi_\theta)$ of phase maps with the corresponding heat kernel
weights. Consequently, characters whose phase functions exhibit unstable
variation across nearby persistence intervals are increasingly
suppressed as $t$ grows. The heat RKHS $\mathcal H_t$ therefore favors
functions built from stable Fourier modes, namely characters whose
associated phase functions vary slowly with respect to the quotient
metric $\overline d_1$.

\begin{remark}\label{rem:gaussian-heat}
On $\mathbb R^d$ with the Euclidean Laplacian $\Delta$, the heat semigroup (After Definition~\ref{def:fourier-multiplier})
$e^{-t\Delta}$ has Fourier multiplier $e^{-t|\xi|^2}$ and fundamental solution
\[
  K_t(x,y)\ =\ \frac{1}{(4\pi t)^{d/2}}
               \exp\Big(-\frac{|x-y|^2}{4t}\Big),
\]
the standard Gaussian radial basis kernel.  The corresponding RKHS consists
of functions whose Fourier transforms are square-integrable with respect to
the heat weight $e^{-t|\xi|^2}$, and Corollary~\ref{cor:heat-lip-spectral}
reduces to the familiar statement that Gaussian RKHS functions are
Lipschitz, with a constant controlled by a heat-weighted second moment of
$|\xi|$.

In our setting, $k_t=k_{\nu_t}$ plays the analogous role on $G$:
$\lambda(\theta)$ takes the place of $|\xi|^2$, the weight
$e^{-t\lambda(\theta)}$ is the heat multiplier on the dual torus
$\widehat G\cong\mathbb T^{|X\setminus A|}$, and Corollaries~\ref{cor:heat-lip-spectral}
and~\ref{cor:heat-lip-geom} quantify how this spectral damping translates
into Lipschitz regularity with respect to the VPD metric~$\rho$.  The resulting bounds are the analytic input we will use later to control the stability of learned functionals of virtual persistence diagrams under
$W_1$-perturbations of the underlying diagrams.
\end{remark}

\section{Random Fourier features}\label{subsec:heat-rff}

The heat kernels associated to the heat measures
( Equation~\eqref{eq:heat-measure}) define a family of translation-invariant RKHSs
$\{\mathcal H_t\}_{t>0}$ on the virtual diagram group
$G:=K(X,A)$, with Lipschitz control given by
Theorem~\ref{thm:heat-lip} and
Corollaries~\ref{cor:heat-lip-spectral}-\ref{cor:heat-lip-geom}. We now
construct finite-dimensional random feature maps that approximate the
heat kernels by sampling characters from the heat law on the dual torus.
The resulting embeddings map virtual persistence diagrams into Euclidean space through heat-weighted circular coordinate systems.

\subsection{Sampling from the heat law}\label{subsec:heat-rff-sampling}

We now pass from the integral representation of $k_t$ to an explicit
finite-dimensional feature map by Monte Carlo sampling of characters from the
heat measure. The construction is the usual cosine-sine lift of complex
exponentials, but here the sampling law is the heat law on the dual torus and
the domain is the virtual diagram group $K(X,A)$.

\begin{definition}\label{def:heat-rff}
Fix $t>0$ and $R\in\mathbb N$. Let
$\theta^{(1)},\dots,\theta^{(R)}$ be independent samples from the probability
measure on $\mathbb T^{|X\setminus A|}$ with density
\begin{equation}\label{eq:heat-sampling-density}
  \theta\ \longmapsto\ \frac{e^{-t\lambda(\theta)}}{\nu_t(\mathbb T^{|X\setminus A|})}
  \quad\text{with respect to }\mu
\end{equation}
that is, from the normalized heat measure $\nu_t/\nu_t(\mathbb T^{|X\setminus A|})$. Define
the feature map
\begin{equation}\label{eq:heat-rff-map}
  \Phi_{t,R}:K(X,A)\longrightarrow\mathbb R^{2R},
  \qquad
  \Phi_{t,R}(\alpha)
  :=\sqrt{\frac{\nu_t(\mathbb T^{|X\setminus A|})}{R}}\,
    \bigl(\cos\langle\alpha,\theta^{(r)}\rangle,\
          \sin\langle\alpha,\theta^{(r)}\rangle\bigr)_{r=1}^R.
\end{equation}
\end{definition}

The feature map $\Phi_{t,R}$ is the cosine-sine realization
of heat-weighted characters sampled from the dual torus. The scaling is
chosen so that inner products of feature vectors approximate the heat
kernel $k_t$.

\begin{remark}\label{rem:heat-law-sampling}
Since \(\lambda(\theta)\ge 0\), we have
\(0<e^{-t\lambda(\theta)}\le 1\) for all
\(\theta\in\mathbb T^{|X\setminus A|}\). To sample from
\(\nu_t/\nu_t(\mathbb T^{|X\setminus A|})\), draw
\(\Theta\sim\mu\) by sampling each torus coordinate independently
uniformly from \([0,2\pi)\), draw
\(U\sim\mathrm{Unif}[0,1]\) independently, and accept \(\Theta\) when
\(U\le e^{-t\lambda(\Theta)}\). The accepted samples then have law
\(\nu_t/\nu_t(\mathbb T^{|X\setminus A|})\).
\end{remark}

\subsection{Kernel approximation and unbiasedness}

The next lemma records the heat-weighted analog of the standard
unbiasedness property of random Fourier features~\cite{10.5555/2981562.2981710}:
The inner product of two feature vectors is an unbiased Monte Carlo estimator
of $k_t(\alpha,\beta)$.

\begin{lemma}\label{lem:heat-rff-unbiased}
For all $\alpha,\beta\in K(X,A)$,
\begin{equation}\label{eq:heat-rff-unbiased}
  \mathbb E\big[\langle\Phi_{t,R}(\alpha),\Phi_{t,R}(\beta)\rangle\big]
  =
  k_t(\alpha,\beta).
\end{equation}
\end{lemma}

\begin{proof}
By Definition~\ref{def:heat-rff},
\[
\begin{aligned}
\mathbb E\big[\langle\Phi_{t,R}(\alpha),\Phi_{t,R}(\beta)\rangle\big]
&=
\frac{\nu_t(\mathbb T^{|X\setminus A|})}{R}
\sum_{r=1}^R
\mathbb E\Big[
\cos\langle\alpha,\theta^{(r)}\rangle\,
\cos\langle\beta,\theta^{(r)}\rangle\\
&\hphantom{=
\frac{\nu_t(\mathbb T^{|X\setminus A|})}{R}
\sum_{r=1}^R
\mathbb E\Big[}
\quad
+
\sin\langle\alpha,\theta^{(r)}\rangle\,
\sin\langle\beta,\theta^{(r)}\rangle
\Big].
\end{aligned}
\]
Since the $\theta^{(r)}$ are i.i.d., the sum reduces to a single
expectation, and the cosine angle-difference identity gives
\[
\begin{aligned}
\mathbb E\big[\langle\Phi_{t,R}(\alpha),\Phi_{t,R}(\beta)\rangle\big]
&=
\nu_t(\mathbb T^{|X\setminus A|})\,
\mathbb E\big[
\cos\langle\alpha-\beta,\theta^{(1)}\rangle
\big]\\
&=
\nu_t(\mathbb T^{|X\setminus A|})
\int_{\mathbb T^{|X\setminus A|}}
\cos\langle\alpha-\beta,\theta\rangle\,
\frac{e^{-t\lambda(\theta)}}{\nu_t(\mathbb T^{|X\setminus A|})}
\,d\mu(\theta)\\
&=
\int_{\mathbb T^{|X\setminus A|}}
\cos\langle\alpha-\beta,\theta\rangle\,
e^{-t\lambda(\theta)}
\,d\mu(\theta).
\end{aligned}
\]

Replacing $\theta$ by $-\theta$ conjugates each value of the associated
phase function, so the Dirichlet energy (Equation~\eqref{eq:lambda-dirichlet}) satisfies
$\lambda(-\theta)=\lambda(\theta)$. Since Haar measure is invariant under
$\theta\mapsto-\theta$, the imaginary part of
$e^{i\langle\alpha-\beta,\theta\rangle}$ integrates to zero. Therefore
\[
\begin{aligned}
\int_{\mathbb T^{|X\setminus A|}}
\cos\langle\alpha-\beta,\theta\rangle\,
e^{-t\lambda(\theta)}
\,d\mu(\theta)
&=
\int_{\mathbb T^{|X\setminus A|}}
e^{i\langle\alpha-\beta,\theta\rangle}\,
e^{-t\lambda(\theta)}
\,d\mu(\theta)\\
&=
k_t(\alpha,\beta),
\end{aligned}
\]
where the final equality is the Bochner representation of $k_t$.
\end{proof}

Thus, at the level of kernels, $(\alpha,\beta)\mapsto\langle
\Phi_{t,R}(\alpha),\Phi_{t,R}(\beta)\rangle$ is a Monte Carlo approximation to
$k_t(\alpha,\beta)$ that becomes accurate as $R$ grows, while keeping all
computations in the finite-dimensional Euclidean space $\mathbb R^{2R}$.

\subsection{Lipschitz control for a fixed draw}

We now link the random features back to the VPD metric $\rho$. For a fixed
draw of frequencies $\{\theta^{(r)}\}$, the following lemma bounds the
Lipschitz constant of $\Phi_{t,R}$ in terms of the characterwise
Lipschitz seminorms from Lemma~\ref{lem:char-lip-comparison}.

\begin{lemma}\label{lem:heat-rff-lip}
For any fixed sample $\{\theta^{(r)}\}_{r=1}^R$,
\begin{equation}\label{eq:heat-rff-fixed-draw-lip}
  \mathrm{Lip}(\Phi_{t,R})
  \le
  \sqrt{2\,\nu_t(\mathbb T^{|X \setminus A|})}\,
  \Bigg(\frac1R\sum_{r=1}^R
           \mathrm{Lip}(\chi_{\theta^{(r)}})^2\Bigg)^{1/2}.
\end{equation}
\end{lemma}

\begin{proof}
Fix $\theta\in\mathbb T^{|X\setminus A|}$. Then, for all $\alpha,\beta\in K(X,A)$,
\[
  |\cos\langle\alpha,\theta\rangle-\cos\langle\beta,\theta\rangle|
  \le |\chi_\theta(\alpha)-\chi_\theta(\beta)|,
\]
and similarly for sine, so each of the two coordinates
\[
  \alpha\ \longmapsto\ \cos\langle\alpha,\theta\rangle,\qquad
  \alpha\ \longmapsto\ \sin\langle\alpha,\theta\rangle
\]
is $\mathrm{Lip}(\chi_\theta)$-Lipschitz on $(K(X,A),\rho)$. For
$\theta^{(r)}$ as in Definition~\ref{def:heat-rff}, the $2R$ coordinates of
$\Phi_{t,R}$ are therefore each
$\sqrt{\nu_t(\mathbb T^{|X \setminus A|})/R}\,\mathrm{Lip}(\chi_{\theta^{(r)}})$-Lipschitz.

For a Euclidean-valued map $F=(f_i)_i$ one has
\begin{equation}\label{eq:coordinate-lip-euclidean-map}
  |F(\alpha)-F(\beta)|_2^2
  \le
  \rho(\alpha,\beta)^2\sum_i \mathrm{Lip}(f_i)^2,
\end{equation}
where the sum runs over coordinates $f_i$. Applying this to
$\Phi_{t,R}$ gives
\[
\begin{aligned}
\mathrm{Lip}(\Phi_{t,R})^2
&\ \le\ \frac{\nu_t(\mathbb T^{|X\setminus A|})}{R}\sum_{r=1}^R
        \bigl(\mathrm{Lip}(\chi_{\theta^{(r)}})^2
             +\mathrm{Lip}(\chi_{\theta^{(r)}})^2\bigr)\\
&\ =\ 2\,\nu_t(\mathbb T^{|X\setminus A|})\Bigl(\frac1R\sum_{r=1}^R
              \mathrm{Lip}(\chi_{\theta^{(r)}})^2\Bigr),
\end{aligned}
\]
and taking square roots yields the claimed inequality.
\end{proof}

Thus, the Lipschitz constant of $\Phi_{t,R}$ is controlled by
the empirical average of the characterwise Lipschitz seminorms.

\subsection{Concentration of the instability scales}

We now identify the large-sample limit of the empirical averages
$\frac1R\sum_{r=1}^R \mathrm{Lip}(\chi_{\theta^{(r)}})^2$
under the heat law.

\begin{lemma}\label{lem:heat-rff-lln}
With $\{\theta^{(r)}\}_{r=1}^R$ as in Definition~\ref{def:heat-rff}, one has
\begin{equation}\label{eq:heat-rff-lln}
  \frac1R\sum_{r=1}^R \mathrm{Lip}(\chi_{\theta^{(r)}})^2
  \xrightarrow{\ \mathbb P\ }
  \frac{1}{\nu_t(\mathbb T^{|X\setminus A|})}
  \int_{\mathbb T^{|X\setminus A|}}\mathrm{Lip}(\chi_\theta)^2\,
  e^{-t\lambda(\theta)}\,d\mu(\theta)
\end{equation}
as $R\to\infty$.
\end{lemma}

\begin{proof}
The random variables
$\mathrm{Lip}(\chi_{\theta^{(r)}})^2$ are i.i.d.\ with finite mean
\[
\begin{aligned}
\mathbb E\big[\mathrm{Lip}(\chi_{\theta^{(1)}})^2\big]
&=\int_{\mathbb T^{|X\setminus A|}}\mathrm{Lip}(\chi_\theta)^2\,
  \frac{e^{-t\lambda(\theta)}}{\nu_t(\mathbb T^{|X \setminus A|})}\,d\mu(\theta)\\
&=\frac{1}{\nu_t(\mathbb T^{|X\setminus A|})}
  \int_{\mathbb T^{|X\setminus A|}}\mathrm{Lip}(\chi_\theta)^2\,
  e^{-t\lambda(\theta)}\,d\mu(\theta),
\end{aligned}
\]
which is finite by the edgewise formula in Equation~\eqref{eq:edgewise-character-lipschitz} and the definition of $\nu_t$. By the law of large numbers, the empirical averages
\[
  \frac1R\sum_{r=1}^R \mathrm{Lip}(\chi_{\theta^{(r)}})^2
\]
converge in probability to this mean as $R\to\infty$.
\end{proof}

Combining Lemmas~\ref{lem:heat-rff-lip}
and~\ref{lem:heat-rff-lln} shows that the Lipschitz constants of the
random feature maps are governed by the same heat-weighted spectral
quantities that appear in Theorem~\ref{thm:heat-lip}. We now express
these bounds directly in terms of the Dirichlet energy
$\lambda(\theta)$.

\subsection{Spectral stability of the random features}

We now express the Lipschitz stability scale of
$\Phi_{t,R}$ directly in terms of the Dirichlet energy
$\lambda(\theta)$ from Equation~\eqref{eq:lambda-dirichlet}.

\begin{theorem}\label{thm:heat-rff-spectral}
Fix $t>0$.  As $R\to\infty$, the Lipschitz constants of the feature maps
\[
  \Phi_{t,R}:K(X,A)\longrightarrow\mathbb R^{2R}
\]
satisfy the asymptotic upper bound
\begin{equation}\label{eq:heat-rff-spectral-bound}
  \mathrm{Lip}(\Phi_{t,R})
  \ \le\
  \frac{\pi}{d_{\min}\sqrt{w_{\min}}}\,
  \Bigg(\int_{\mathbb T^{|X \setminus A|}}
    \lambda(\theta)e^{-t\lambda(\theta)}\,d\mu(\theta)\Bigg)^{1/2}
  \quad\text{in probability}
\end{equation}
where $w_{\min}$ and $d_{\min}$ are as in Lemma~\ref{lem:lambda-vs-L}.
\end{theorem}

\begin{proof}
From Lemma~\ref{lem:heat-rff-lip} and Lemma~\ref{lem:heat-rff-lln},
\[
  \mathrm{Lip}(\Phi_{t,R})^2
  \ \le\
  2\,\nu_t(\mathbb T^{|X\setminus A|})\,
  \frac{1}{\nu_t(\mathbb T^{|X\setminus A|})}
  \int_{\mathbb T^{|X\setminus A|}}\mathrm{Lip}(\chi_\theta)^2\,
                      e^{-t\lambda(\theta)}\,d\mu(\theta)
\]
in probability as $R\to\infty$, so
\[
  \mathrm{Lip}(\Phi_{t,R})^2
  \ \le\
  2\int_{\mathbb T^{|X\setminus A|}}\mathrm{Lip}(\chi_\theta)^2\,
                      e^{-t\lambda(\theta)}\,d\mu(\theta)
\quad\text{in probability.}
\]
By Lemma~\ref{lem:lambda-vs-L},
\[
  \mathrm{Lip}(\chi_\theta)^2
  \ \le\
  \frac{\pi^2}{2\,w_{\min}\,d_{\min}^2}\,\lambda(\theta),
\]
so
\[
  \mathrm{Lip}(\Phi_{t,R})^2
  \ \le\
  \frac{\pi^2}{w_{\min}\,d_{\min}^2}
  \int_{\mathbb T^{|X\setminus A|}}\lambda(\theta)\,e^{-t\lambda(\theta)}\,d\mu(\theta)
\quad\text{in probability.}
\]
Taking square roots yields
\[
  \mathrm{Lip}(\Phi_{t,R})
  \ \le\
  \frac{\pi}{d_{\min}\sqrt{w_{\min}}}\,
  \Bigg(\int_{\mathbb T^{|X\setminus A|}}\lambda(\theta)\,e^{-t\lambda(\theta)}\,d\mu(\theta)\Bigg)^{1/2},
\]
as claimed.
\end{proof}

Theorem~\ref{thm:heat-rff-spectral} shows that the
heat weighting suppresses characters with large Dirichlet energy, and
hence large VPD Lipschitz seminorm, in the random feature
representation. Consequently, the feature maps $\Phi_{t,R}$ retain the
Wasserstein stability structure of the heat kernel while reducing
computations to finite-dimensional Euclidean linear algebra on
$\mathbb R^{2R}$.

\section{Experiments}\label{sec:experiments}

In digital image processing, cubical filtrations are naturally finite because pixel or voxel intensities take values in a finite ordered set.  If a filtration uses \(n\) distinct intensity levels, then every nontrivial persistence interval is determined by a pair of filtration indices, so the number of possible interval types is \(\binom{n}{2} = \frac{n(n-1)}{2}\). For standard \(8\)-bit grayscale images, \(n=256\), giving \(\binom{256}{2}=32640\) possible intervals.  More generally, if intensities are represented using \(N\) bits, then the number of interval types is \(\binom{2^N}{2}=2^{N-1}(2^N-1)\). Thus, persistence diagrams from digital images or voxel data admit only finitely many possible persistence intervals and therefore define finite-rank virtual persistence diagrams.

We will define a topological loss on the Grothendieck group $K(X,A)$
associated with the persistence-diagram monoid $D(X,A)$
via the Grothendieck construction (Equation~\eqref{eq:groth-equivalence}). For a ground-truth mask $y$ and soft mask
$\hat y$, let $D_y,D_{\hat y}\in D(X,A)$ denote their
$H_0\oplus H_1$ persistence diagrams under the fixed cubical filtration
from Definition~\ref{def:fixed-cubical-filtration},
regarded as elements of $K(X,A)$ via the canonical inclusion of
$D(X,A)$ into its group completion, and set
$\gamma:=D_{\hat y}-D_y\in K(X,A)$. Let $k_t$ be the
translation-invariant heat kernel on $K(X,A)$ from
Definition~\ref{def:heat-measure} with reproducing kernel
Hilbert space $\mathcal H_t$ induced by the heat measure
from Equation~\eqref{eq:heat-measure}. We represent the topological loss arising from
virtual persistence diagrams as the square of the RKHS semimetric:
\begin{equation}\label{eq:Ltopo-ideal}
\begin{aligned}
\mathcal L_{\mathrm{topo}}(\gamma)
&:= \bigl\|k_t(\gamma,\cdot)-k_t(0,\cdot)\bigr\|_{\mathcal H_t}^2 \\
&= k_t(\gamma,\gamma)+k_t(0,0)-2k_t(\gamma,0) \\
&= 2\bigl(k_t(0,0)-k_t(\gamma,0)\bigr),
\end{aligned}
\end{equation}
where the final equality uses translation-invariance of $k_t$ on
$K(X,A)$.

In the finite-dimensional implementation we approximate
$k_t$ by inner products of the random Fourier feature map
$\Phi_{t,R}:K(X,A)\to\mathbb R^{2R}$ from
Equation~\eqref{eq:heat-rff-map}. The map $\Phi_{t,R}$ samples
heat-weighted circular coordinate systems on the dual torus and records
their associated cosine-sine coordinates on virtual persistence diagrams.
By the unbiasedness identity (Equation~\eqref{eq:heat-rff-unbiased}), the inner
product
\(
\langle \Phi_{t,R}(\alpha),\Phi_{t,R}(\beta)\rangle
\)
is an unbiased Monte Carlo estimator of \(k_t(\alpha,\beta)\). Applying
this approximation to the three kernel terms in
Equation~\eqref{eq:Ltopo-ideal} gives
\begin{equation}\label{eq:Ltopo-rff}
\begin{aligned}
\mathcal L_{\mathrm{topo}}(\gamma)
&\approx
\|\Phi_{t,R}(\gamma)\|_2^2
+\|\Phi_{t,R}(0)\|_2^2
-2\langle\Phi_{t,R}(\gamma),\Phi_{t,R}(0)\rangle \\
&=
\|\Phi_{t,R}(\gamma)-\Phi_{t,R}(0)\|_2^2.
\end{aligned}
\end{equation}
Thus, the implemented topological loss is the squared Euclidean distance
between the random feature representations of $\gamma$ and $0$. The resulting loss uses the same heat weighting of characters as the full heat kernel while reducing computations to finite-dimensional Euclidean linear algebra.

Recall the soft Dice loss \(\mathcal L_{\mathrm{Dice}}\)
defined in Equation~\eqref{eq:dice-loss}. We train the segmentation model using
the sum of the Dice loss and a weighted topological loss. The Dice loss
penalizes disagreement between the predicted and ground-truth masks,
while the topological loss penalizes discrepancies between their
persistence diagrams under the fixed cubical filtration from
Definition~\ref{def:fixed-cubical-filtration}. The parameter
\(w_{\mathrm{topo}}>0\) controls the relative contribution of the
topological loss during training:
\begin{equation}\label{eq:total-loss}
\mathcal L_{\mathrm{total}}(y,\hat y)
=
\mathcal L_{\mathrm{Dice}}(y,\hat y)
+
w_{\mathrm{topo}}\,
\mathcal L_{\mathrm{topo}}(\gamma).
\end{equation}

\subsection{Experimental setup}

We use a synthetic \(64\times64\) binary-segmentation dataset whose
images are formed by superposing rings, spirals, line segments, and blob
clusters. The ground-truth mask is the indicator function of the
foreground region in the noiseless image. We use \(200\) samples with a
fixed \(100/50/50\) train-validation-test split across all methods. At
each epoch, we independently resample Gaussian, Poisson, speckle, and
salt-and-pepper noise together with uniform jitter, and apply these
perturbations only to the input images.

All experiments use UNet, a standard encoder-decoder convolutional
segmentation network~\cite{Ronneberger2015UNetCN}. Consequently,
the differences between the methods reflect only the choice of training
loss. The baseline model minimizes the soft Dice loss
(Equation~\eqref{eq:dice-loss}). The remaining two models augment the Dice loss
with a topological penalty computed from the predicted and ground-truth
persistence diagrams. One model uses a \(2\)-Wasserstein distance
between persistence diagrams~\cite{9186664,Bubenik2022VirtualPD}, while
the other uses the RKHS topological loss
(Equation~\eqref{eq:Ltopo-rff}). Both topological losses use the fixed cubical
filtration from Definition~\ref{def:fixed-cubical-filtration} and the
same weighting parameter \(w_{\mathrm{topo}}=500\). For the RKHS loss,
we use the heat-kernel parameter \(t=10\) and the random-feature dimension
\(R=256\).

We train with Adam, learning rate \(10^{-3}\), batch size \(8\), and at
most twenty epochs, with early stopping based on the validation Dice
coefficient (Equation~\eqref{eq:dice-score}). At test time, we threshold
predictions at \(0.5\) as in Equation~\eqref{eq:thresholded-mask} and report
mean IoU (Equation~\eqref{eq:iou}) and Dice
coefficient (Equation~\eqref{eq:dice-score}) on the \(50\)-image test set.

\subsection{Results}

\begin{figure}[t]
    \centering
    \includegraphics[width=\linewidth]{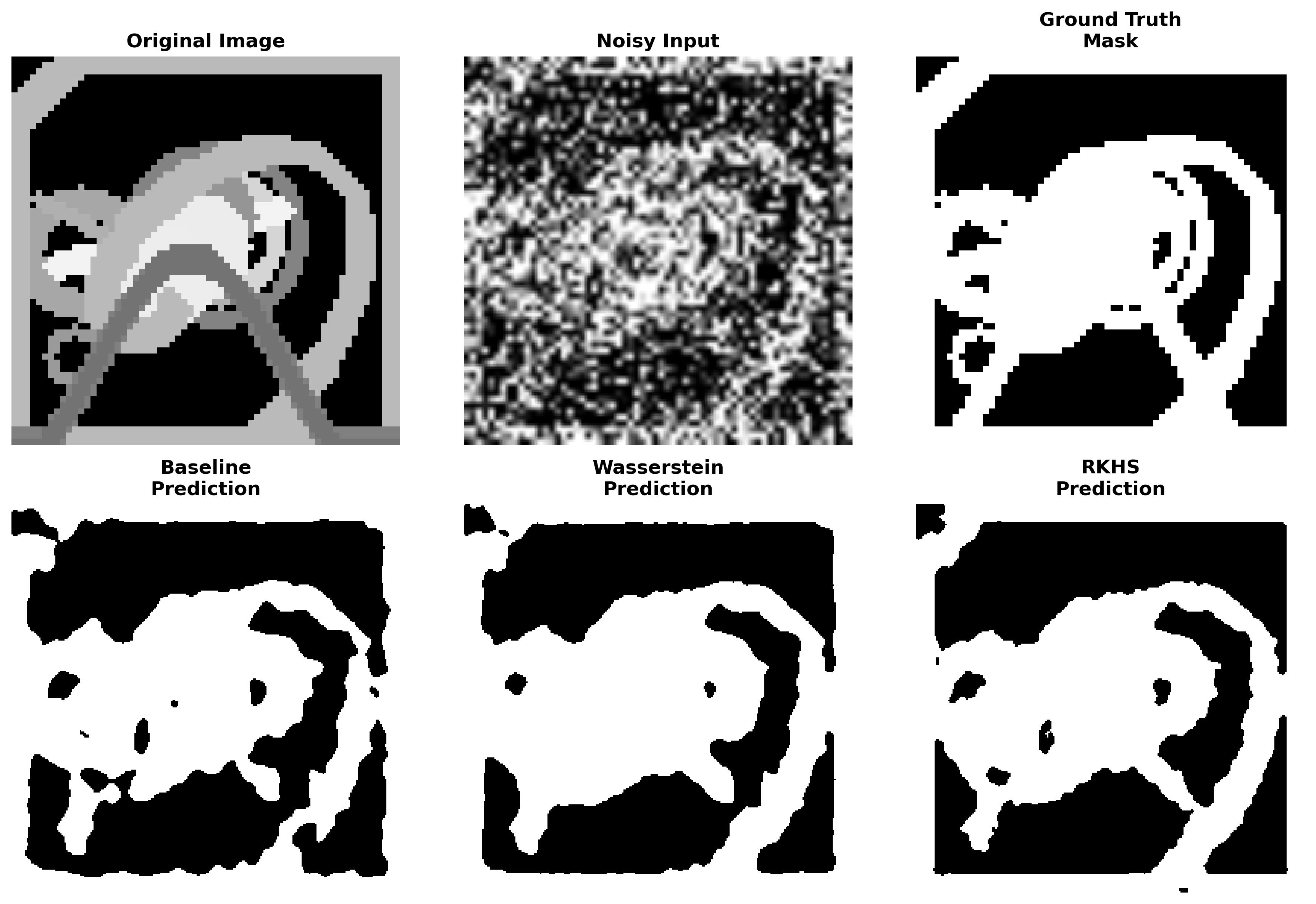}
    \caption{Synthetic testing data showing the original image, noisy network input, ground--truth mask, and the corresponding predictions from the Dice baseline, Wasserstein loss, and RKHS loss.}
    \label{fig:synthetic-qualitative}
\end{figure}

Figure~\ref{fig:synthetic-qualitative} shows an unseen test instance in which the Dice loss (Equation~\eqref{eq:dice-loss}) fails to recover the correct global topology, the Wasserstein loss partially corrects these structural errors but still distorts the underlying shape, and the RKHS loss best preserves the ground-truth topology, in line with the quantitative trends in Table~\ref{tab:three-way-results}.  

\begin{table}[t]
  \centering
  \caption{Three--way comparison on the high--noise synthetic test set. The IoU and Dice columns give mean test-set scores. The final two columns report relative improvements in mean IoU over the Dice baseline and Wasserstein models, respectively.}
  \label{tab:three-way-results}
  \begin{tabular}{lcccc}
    \toprule
    Model        & IoU    & Dice   & vs.\ Baseline & vs.\ Wasserstein \\
    \midrule
    Baseline     & 0.8485 & 0.9176 &         &       \\
    Wasserstein  & 0.8636 & 0.9264 & +1.79\% &       \\
    RKHS         & 0.8831 & 0.9377 & +4.08\% & +2.25\% \\
    \bottomrule
  \end{tabular}
\end{table}

Wasserstein improves mean IoU by $+0.0151$ relative to the baseline, while the RKHS loss improves mean IoU by $+0.0346$ and exceeds Wasserstein by an additional $+0.0195$. Furthermore, when we isolate the lowest $10\%$ of baseline test cases by the baseline IoU, adding the Wasserstein loss term increases IoU by $+10.7$ percentage points over the baseline, adding the RKHS semimetric loss term increases IoU by $+11.8$ percentage points over the baseline, and the RKHS semimetric increases IoU by a further $+7.7$ points over Wasserstein.

\section{Conclusion}\label{sec:conclusion}

We have developed a heat-kernel RKHS on the virtual persistence diagram group $K(X,A)$ associated to a finite metric pair $(X,d,A)$, and shown that the resulting feature map is globally $W_1$-Lipschitz. The construction is organized spectrally by the Laplacian symbol $\lambda(\theta)$ on the dual torus, with heat multipliers $e^{-t\lambda(\theta)}$ providing an explicit scale for suppressing unstable phase variation through the parameter $t$.  This yields a family of Wasserstein-stable kernels together with finite-dimensional random Fourier feature approximations satisfying the same probabilistic Lipschitz bounds, thereby producing computable Euclidean embeddings of virtual persistence diagrams. In this sense, the paper provides an explicit spectral kernel framework for learning on virtual persistence diagrams.

In a noisy, topology-rich segmentation setting, the RKHS loss outperforms a Wasserstein topological loss: it more reliably recovers the global topology of the target and yields the largest gains precisely on images where the Dice baseline is most topologically distorted.  This is consistent with the analytic picture: when the virtual persistence diagram difference $\gamma$ is far from $0$ in the transport metric, the heat-kernel embedding produces a well-conditioned, globally Lipschitz gradient that corrects large topological discrepancies, whereas the Wasserstein loss does not incorporate heat-weighted phase stabilization. The experiments provide a concrete illustration of this mechanism.

The main limitations are that the theory is developed for finite-rank virtual persistence diagrams and that the Monte Carlo implementation of the heat kernel introduces stochastic variability. The stability-resolution tradeoff is governed by the heat parameter \(t\), and the empirical study is deliberately narrow, intended only to demonstrate the analytic mechanism rather than to serve as a broad benchmark.

Future directions include extending the harmonic analysis to infinite-rank virtual persistence diagrams and their Cauchy completions, identifying Sobolev-scale regularity properties of the heat embedding on the Pontryagin dual torus, and studying symmetric group actions on the virtual persistence diagram group.

\section*{Declarations}

\begin{itemize}

\item \textbf{Competing Interests} The authors declare that they have no competing interests.

\item \textbf{Funding} This research received no external funding.

\item \textbf{Data Availability} Not applicable.

\item \textbf{Code Availability} The implementation used in this work is available at \url{https://github.com/cfanning8/Virtual_Persistence_RKHS}.

\item \textbf{Authors' Contributions}
C.F. developed the theoretical framework, conducted the experiments, and wrote the manuscript. M.E.A. supervised the project and provided critical feedback on the framework, experiments, and manuscript.

\end{itemize}


\bibliography{sn-bibliography}

\end{document}